\definecolor{darkblue}{rgb}{0.0,0.0,0.8}
\theoremstyle{plain}
\newtheorem{theorem}{Theorem}[section]
\newtheorem{lemma}[theorem]{Lemma}
\theoremstyle{definition}
\theoremstyle{remark}
\begin{document}
\begin{frontmatter}
\title{Posterior Concentration for Gaussian Process Priors under Rescaled and Hierarchical Mat\'ern and Confluent Hypergeometric Covariance
Functions}
\runtitle{Posterior concentration for rescaled and hierarchical Mat\'ern and CH}

\begin{aug}
%%%%%%%%%%%%%%%%%%%%%%%%%%%%%%%%%%%%%%%%%%%%%%%
%% ORCID can be inserted by command:         %%
%% \orcid{0000-0000-0000-0000}               %%
%%%%%%%%%%%%%%%%%%%%%%%%%%%%%%%%%%%%%%%%%%%%%%%
\author{\fnms{Xiao}~\snm{Fang}\ead[label=e1]{fang353@purdue.edu}}
\and
\author{\fnms{Anindya}~\snm{Bhadra}\ead[label=e2]{bhadra@purdue.edu}}
%%%%%%%%%%%%%%%%%%%%%%%%%%%%%%%%%%%%%%%%%%%%%%
%% Addresses                                %%
%%%%%%%%%%%%%%%%%%%%%%%%%%%%%%%%%%%%%%%%%%%%%%
\address{Department of Statistics,
Purdue University, West Lafayette, IN 47907-2066, USA\printead[presep={.\ }]{e1,e2}}
\runauthor{Fang and Bhadra}
\end{aug}

\begin{abstract}
In nonparametric Bayesian approaches, Gaussian stochastic processes can serve as priors on real-valued function spaces. Existing literature on the posterior convergence rates under Gaussian process priors shows that it is possible to achieve optimal or near-optimal posterior contraction rates if the smoothness of the Gaussian process matches that of the target function. Among those priors, Gaussian process with a parametric Mat\'ern  covariance function is particularly notable in that its degree of  smoothness can be determined by a dedicated smoothness parameter.  \citet{ma2022beyond} recently introduced  a new family of covariance functions called the Confluent Hypergeometric (CH) class that  simultaneously possess two parameters: one controls the tail index of the  polynomially  decaying covariance function, and the other parameter controls the degree of  mean-squared smoothness analogous to the Mat\'ern class. In this paper, we show that with proper choice of rescaling parameters in the Mat\'ern and  CH covariance functions, it is possible to obtain the  minimax optimal posterior contraction rate for $\eta$-regular functions for  nonparametric regression model with fixed design. Unlike the previous results for unrescaled cases, the smoothness parameter of the covariance function need not equal $\eta$ for achieving the optimal minimax rate, for either rescaled Mat\'ern or rescaled CH covariances, illustrating a key benefit of rescaling.  We also consider a fully Bayesian treatment of the rescaling parameters and show the resulting posterior distributions  still contract
at the minimax-optimal rate. %without knowing regularity of the true function, hence,  
The resultant 
hierarchical Bayesian procedure is  fully adaptive to the unknown true smoothness.   The  theoretical properties of the rescaled and  hierarchical Mat\'ern and CH classes are further verified via extensive simulations and an illustration on a geospatial data set is presented. \\
\end{abstract}
\begin{keyword}[class=MSC]
	\kwd[Primary ]{62G20}
	\kwd[; secondary ]{62G05, 62F15}
\end{keyword}

\begin{keyword}
\kwd{$\eta$-regular function}
\kwd {minimax optimality}
\kwd{posterior contraction rate}
\kwd{nonparametric Bayesian estimation}
\kwd{reproducing kernel Hilbert space}
\end{keyword}
\tableofcontents
\end{frontmatter}

\section{Introduction}
%Gaussian processes (GPs) are popular tools for making inference about unknown functions.  
In nonparametric Bayesian estimation approaches, Gaussian processes (GPs) can be adopted as  priors on functional parameters of interest. For
instance, the sample path of a GP can be used to model a real-valued regression function   \citep{kimeldorf1970correspondence,williams2006gaussian}. Moreover, after a monotonic transformation to the unit interval, it can also be used for classification \citep{williams2006gaussian,ghosal2006posterior}. Proceeding further along the same lines, after exponentiation and re-normalization, a GP provides a suitable nonparametric model for density estimation \citep{leonard1978density,tokdar2007posterior}. In all these problems, the study of posterior concentration properties under a Gaussian process prior is of fundamental interest.

To formalize the notation, denote a Gaussian process as: $W=(W_t: t\in T)$ with mean function $\mu(t) = E(W_t)$ and covariance function $K(s,t) = \mathrm{cov}(W_s,W_t )$, $s,t \in T$, where $T$ is an arbitrary index set; such that every finite-dimensional realization of the process admits a multivariate Gaussian distribution with a mean vector and covariance matrix determined by $\mu(\cdot)$ and $K(\cdot, \cdot)$.  Throughout this paper, we consider a zero mean GP, whose properties are completely determined by its covariance function $K(\cdot,\cdot)$. A GP is called (second order) stationary if  the covariance function $K(s,s+h)=C(h)$ is  a function that  only depends on $h$. Further, $C(\cdot)$ is called isotropic if it is a function of $|h|$, where $|\cdot|$ denotes the Euclidean norm.

Among the parametric family of covariance functions, the isotropic Mat\'ern model is popular and is a good default choice \citep{stein1999interpolation,porcu2023mat}. A key reason for the popularity of Mat\'ern is that there is a dedicated parameter controlling the degree of mean-squared smoothness of the associated random process. However, the Mat\'ern class possesses an exponentially decaying tail, which is unsuitable if distant observations are highly correlated; a situation that is better captured by polynomially decaying covariances.  \citet{ma2022beyond} recently introduced a new family of covariance functions called the Confluent Hypergeometric (CH) class by using a scale mixture representation of the Mat\'ern class. The main motivation behind the CH covariance function is that it possesses polynomial decaying tails, unlike the exponential tails of the Mat\'ern class. Moreover, a key benefit of the CH class, unlike other polynomial covariances such as the generalized Cauchy but like Mat\'ern, is that it possesses a dedicated parameter controlling the degree of mean-squared differentiability of the associated Gaussian process \citep{stein1999interpolation}. In this sense, the CH class combines the best properties of Mat\'ern and polynomial covariances.  Throughout, we use \emph{Mat\'ern process} as a shorthand for a GP with a Mat\'ern covariance function, and similarly for other covariance models.

Given a specification of prior and likelihood, an application of Bayes' rule  yields a posterior distribution. It is of fundamental interest to study the contraction rates of such Bayesian posteriors, i.e., the rate at which the posterior distribution contracts around the true unknown functional parameter of interest.
There exists a substantial literature on the posterior contraction rates of Gaussian processes in the Bayesian framework; see for example \citet{van2007bayesian,van2008rates,van2011information,castillo2008lower,castillo2014bayesian,giordano2020consistency,nickl2017nonparametric, nickl2023bayesian,pati2015optimal,van2016gaussian} and references therein, with a textbook level detailed exposition available in \citet{ghosal2017fundamentals}. These works  reveal that priors based on Gaussian processes lead to optimal or near-optimal posterior contraction rates, provided the smoothness of  the Gaussian process matches that of the target function. Both oversmoothing and undersmoothing lead to suboptimal contraction rates. For example, for  $\eta$-regular  target functions (see Section~\ref{section2.1} for a formal definition), the smooth  squared exponential process, i.e. the centered Gaussian process $W$ with covariance function $C(h)=a \exp(-b |h|^2)$ for some $a,b>0$, yields a very slow posterior contraction rate $(1/\log(n))^{\theta}$ for some positive constant $\theta$, and the Mat\'ern process attains the optimal minimax rate only when its smoothness parameter equals the function regularity $\eta$  \citep{van2011information}. A key reason for this is that squared exponential processes lead to realizations that are infinitely differentiable in the mean squared sense, i.e., very smooth. Hence, a squared exponential process  is not appropriate for modeling a functional parameter with some finite smoothness level (e.g., belonging to a Sobolev space), and yields  very slow posterior contraction. Similarly, the Mat\'ern class also leads to suboptimal rates if the roughness of the true function does not match the degree of mean-squared differentiability of the covariance function.

\cite{van2007bayesian} remedy this  problem by suitably rescaling the smooth process under a squared exponential covariance, with rescaling constants depending on the sample size, in the following sense.  Consider a prior
process $t \to W_t^c:= W_{t/c}$  for some $c>0$, where the parameter $c$ can be thought of changing the lengthscale of the process. If the scale parameter $c$ is limited to a compact subset of $(0,\infty)$, then
the contraction rate does not change \citep{van2008rates}. However, while the  smoothness of the sample path does not change for any fixed $c$, a dramatic impact can be observed on the posterior contraction rate  when $c = c_n$ decreases to $0$ or increases to infinity as the sample size $n$ goes to infinity.  Shrinking with $c$ (i.e., the $c<1$ case) can make a given process arbitrarily rough. By this technique, \cite{van2007bayesian} successfully improve the posterior contraction rate for the squared exponential process to the optimal minimax rate (up to a logarithmic factor) for $\eta$-regular functions. Similar ideas for rescaling have appeared in other works related to Gaussian processes \citep{pati2015optimal,10.1214/20-AOS2043}. However, these works deal with  Gaussian processes with a squared exponential covariance.  In this paper, we address the issue of posterior concentration under  the CH process prior, as well as the Mat\'ern process prior with suitable \emph{rescaling}, which has remained unaddressed.   For the isotropic Mat\'ern class, the covariance
function has the form \citep{williams2006gaussian}:
\begin{eqnarray}
M(h;v,\phi,\sigma^2)=\sigma^2 \frac{2^{1-v}}{\Gamma(v)}\left(\frac{\sqrt{2v}}{\phi} h \right)^v {K}_v\left( \frac{\sqrt{2v}}{\phi} h \right);\; v>0,\, \phi>0,\, \sigma^2>0,\label{eq:Matern}
\end{eqnarray}  
where $K_v(\cdot)$ is the modified Bessel function of the second kind \citep[][Section 9.6]{abramowitz1988handbook}. We observe that the parameter $\phi$ is the lengthscale parameter, and is a natural candidate for rescaling. For the isotropic CH class of \cite{ma2022beyond}, the covariance function is:
\begin{eqnarray}
    C(h;v,\alpha,\beta,\sigma^2)=\frac{\sigma^2 \Gamma(v+\alpha)}{\Gamma(v)}U\left(\alpha,1-v,v\left(\frac{h}{\beta}\right)^2\right),\label{eq:CH}
\end{eqnarray}    
where $U(a,b,c)$ is the confluent hypergeometric function of the second kind, defined as in \citet[][Section 13.2]{abramowitz1988handbook}: 
\begin{displaymath}
   U(a,b,c) :=\frac{1}{\Gamma(a)} \int_{0}^{\infty} e^{-ct}t^{a-1}(1+t)^{b-a-1}dt;\; a>0,\, b\in\mathbb{R},\, c>0.
\end{displaymath}
If $\alpha$ is fixed, then the parameter $\beta$ is the lengthscale parameter and is a natural candidate for rescaling.  We control the smoothness of the Gaussian process by changing $\phi$  for the Mat\'ern class  and $\beta$ for the CH class. The key to achieving  the  optimal posterior contraction rates for Mat\'ern and CH  classes lies in  appropriately choosing the rescaling parameters when the true unknown functional parameter of interest is rougher than the mean-squared differentiability of a given covariance function. Indeed, by rescaling $\phi$ in the Mat\'ern class and by rescaling the parameters $\beta$ in the CH class, we obtain optimal minimax  posterior contraction  rate under both priors  for $\eta$-regular true functions, and our posterior contraction rates do not include the logarithmic factor as in \cite{van2007bayesian}.  We note here  \cite{giordano2020consistency} and \cite{nickl2023bayesian} also consider rescaled and undersmoothed $\alpha$-regular processes, which include Mat\'ern processes,  in the context of Bayesian inverse problems. However,  their settings are different from ours, in that they focus on posterior contraction performance  under  their forward map.  

The rescaling approach developed above depends explicitly on the
regularity of the true function $\eta$, which is  typically unknown in practice. To fully address this limitation, we assign  priors on the rescaling parameter  as in \cite{van2009adaptive}, to develop a fully Bayesian alternative,  and show that under this procedure the optimal minimax rate can be achieved simultaneously over a range of values for the true regularity.  Estimators that are rate optimal for a range of regularity levels have been called  \textit{adaptive} \citep{efroimovich1984learning,lepskii1991problem,lepskii1992asymptotically}.  Consequently, our contributions also lie in designing adaptive posterior concentration results for Mat\'ern and CH processes, resulting in a practically useful procedure.

The  remainder of the paper is organized as follows. In Section \ref{section2}, we provide some  relevant background on posterior contraction rates for Gaussian process priors.  Section \ref{section3} presents our main theorems on  posterior contraction rates for  rescaled Mat\'ern  and CH process priors, and the fully Bayesian adaptive versions over a range of regularity values. An extension to the anisotropic  case is discussed in Section~\ref{section4}. In Section \ref{section5}, we  compare the rescaled and hierarchical CH, Mat\'ern, and squared exponential process priors via simulations.  Analysis of a spatial data set is presented in Section~\ref{section6}.  Section \ref{section7} concludes with some discussions for future investigations. Mathematical proofs of all results and further technical details can be found in the Appendix.

\section{Preliminaries on Posterior Contraction under Gaussian Process Priors} \label{section2}
\subsection{Notation and the   Space of $\eta$-regular Functions} \label{section2.1}
For two positive sequences $\{a_n\}, \{b_n\}$, we denote by $a_n \lesssim b_n$ that $a_n=O(b_n)$, and by $a_n \gtrsim b_n$ that $b_n=O(a_n)$, with $a_n \asymp b_n$ denoting $a_n \lesssim b_n$ and $a_n \gtrsim b_n$ simultaneously. We use $m_M^\phi$ and $m_{CH}^{\alpha,\beta}$ to denote  the spectral density of Mat\'ern  and CH process, and their exact expressions are presented in Appendix \ref{Ancillary}.

The following notations are similar as in \citet{van2011information}, but we summarize them here for the ease of  reference. For $\eta>0$, let $\eta=m+\xi$, for $\xi \in(0,1]$ and $m$ a nonnegative integer. For  $T \subset \mathbb{R}^d$, the H\"older space $C^\eta(T)$ is the space of functions whose partial derivatives of orders $\left(k_1, \ldots, k_d\right)$ exist for nonnegative integers $k_1, \ldots, k_d$ with $k_1+\ldots+k_d \leq m$ and the highest order partial derivatives which  are Lipschitz  are of order $\xi$. A function $f$ is said to be  Lipschitz of order $\xi$ if $|f(x)-f(y)| \leq$ $L\|x-y\|^\xi$, for every $x, y \in T$ and $ L>0$. We denote by $C(T)$  the space of all continuous functions on $T$. 

Let $L_2(\mu)$ denote the set of all functions which are square integrable with respect to measure $\mu$.

The Sobolev space $H^\eta(\mathbb{R}^d)$ is the set of functions 
$f_0:\mathbb{R}^d \rightarrow \mathbb{R}$ such that: 
$$
\left\|f_0\right\|_{2,2,\eta}^2:=\int\left(1+\|\lambda\|^2\right)^\eta\left|\hat{f}_0(\lambda)\right|^2 d \lambda<\infty,
$$
where $\hat{f}_0(\lambda)=(2 \pi)^{-d} \int e^{-i <\lambda,t>} f_0(t) dt$ 
%({\color{red} remove negative sign here as the Fourier transform is defined as this in \cite{van2009adaptive} p2662})
%No, this one should have a ngative sign. Their notation is that Fourier transform $\int dt$ has negative sign, and the inverse transform $\int d\lambda$ has positive sign
is the Fourier transform of $f_0$. For $T \subset \mathbb{R}^d$, the Sobolev space $H^\eta(T)$  is the set of functions  $w_0:T \rightarrow \mathbb{R}$ that are restrictions of a function $f_0: \mathbb{R}^d \rightarrow \mathbb{R}$ in $H^\eta(\mathbb{R}^d)$. A function $f: T \rightarrow \mathbb{R}$ is called $\eta$-regular on $T$ if $f \in C^\eta(T) {\cap} H^\eta(T)$.

For $x_1, \cdots, x_n\in T$ and a function $w:T \to \mathbb{R}$, we define the empirical norm   $\|w\|_n$ by: 
$$
\|w\|_n=\left( \frac{1}{n} \sum_{i=1}^n w^2(x_i)\right)^{1/2}.
$$

A bounded domain $\mathcal{X} \subset \mathbb{R}^d$ is said to be Lipschitz if at each point of its boundary, it is locally the set of points located above the graph (i.e., an epigraph) of some Lipschitz function; for a more formal definition, see \citet[][p.~227]{wellner2013weak}.  In this section, and throughout the remainder of the article, $\mathcal{T}$ denotes a convex bounded Lipschitz domain in $\mathbb{R}^d$.

\subsection{Posterior Contraction Rates for Gaussian Process Priors} \label{section2.3}
In this section, we state the necessary background on posterior contraction rates for Gaussian process priors developed by \cite{van2008rates}, who show that  for a mean zero Gaussian process prior $W$, if a functional parameter of interest $w_0$ is in the closure of the reproducing kernel Hilbert space (RKHS) of
this process,  the rate of convergence  at $w_0$ is determined by  its concentration function, defined as:
\begin{equation}
\varphi_{w_0}(\varepsilon_n)= \inf_{h \in {\mathbb{H}}:\|h-w_0\|\le\varepsilon_n}\|h\|_{\mathbb{H}}^2 -\log P(\| W\| \le \varepsilon_n), 
\end{equation}
where ${\mathbb{H}}$ is the RKHS of the process $W$, $\|.\|_{\mathbb{H}}$ is the RKHS-norm  and $\|\cdot\|$ is the norm of the Banach space in which
 $W$ takes its values. By Theorem 2.1 of \cite{van2008rates}, we get the conditions needed to apply the general results on  posterior contraction rates  as stated in Theorem 2.1 of \cite{ghosal2000convergence} by solving:
\begin{equation}
    \varphi_{w_0}(\varepsilon_n) \le n \varepsilon_n^2.\label{concentration function}
\end{equation}
 One may note that Theorem 2.1 of  \cite{van2008rates} uses the Banach space norm, whereas the general conditions for posterior contractions of \citet{ghosal2000convergence} may use other appropriate statistical distances. Nevertheless, the rate of contraction $\epsilon_n$
is obtained when these metrics  are comparable to the  Banach norm \citep[p.~1439,][]{van2008rates}.

In the current paper, we consider the nonparametric  regression  model with fixed design, taking values in $C(T)$, $T\subset \mathbb{R}^d$ and $C(T)$ is a Banach space equipped with  the supremum norm $\|\cdot\|_{\infty}$. In Section \ref{section2.4} we demonstrate how the concentration function determines the posterior contraction rate in this model. A Mat\'ern process takes its values in $C^{v\prime}(T)$ for any $v^\prime < v$ \citep[p.~2104,][]{van2011information}. Hence it  also takes value in $C\left(T\right)$. Following a very similar argument, the sample paths of the CH process $W_{CH}$ have the same smoothness in $L_2$ as the functions $e_t(\lambda)=e^{i \lambda^T t}$ in $L_2(m^{\alpha,\beta}_{CH})$. The sample paths are $k$ times differentiable in $L_2$ \citep{ma2022beyond}, where $k$ is the integer part of the smoothness parameter $v$ for the CH process, with the $k$-th derivative $W^{(k)}_{CH}$ satisfying for $s,t\in T$:
$$
\mathrm{E}\left(W_{CH,s}^{(k)}-W_{CH,t}^{(k)}\right)^2 \lesssim\|s-t\|^{2(v-k)}.
$$
Hence, by an argument analogous to \citet{van2011information}, the CH process takes its values in $C^{{v}'}(T)$ for any ${v'}<v$. Hence, it  also takes value in $C\left(T\right)$.
 
The required conditions for posterior contraction can be further decomposed into  the following  pair of inequalities:
\begin{equation}
\varphi_0(\varepsilon_n)=-\log P(\| W\| \le \varepsilon_n) \le \frac{1}{2}n \varepsilon_n^2, \qquad
\inf_{h \in {\mathbb{H}}:\|h-w_0\|\le\varepsilon_n}\|h\|_{\mathbb{H}}^2 \le \frac{1}{2}n \varepsilon_n^2. \label{two parts}
\end{equation}
The final rate $\varepsilon_n$  can be obtained by  solving the two inequalities in (\ref{two parts}) simultaneously and taking the maximum of the two  solutions. 

Some further insight into these inequalities can be obtained as follows. The first inequality in (\ref{two parts}) deals with  the small ball probability at 0, i.e., the prior mass around zero. It depends only on the prior,
but not on the true parameter $w_0$.  Priors that put more mass near 0 tend to give quick rates $\varepsilon_n$, yielding a strong shrinkage effect towards zero for all functions. The second inequality measures how well $w_0$ can be approximated by elements in the RKHS of the prior, the ideal case being that $w_0$ is contained in the RKHS.  If we take $h=w_0$, then the infimum is bounded by $\left\|w_0\right\|_{\mathbb{H}}^2$, showing that $\varepsilon_n$ must not be smaller than  the \emph{parametric rate} $n^{-1/2}$. In sum, to obtain quick rate $\varepsilon_n$,  the prior should put sufficient mass around 0, and  the true parameter $w_0$ should be in the RKHS, or needs to be well approximated by elements in the RKHS (since  the RKHS can be a very small space, assuming $w_0$ belongs to it may be too strong an assumption). Whether a balance could be struck between these two disparate goals in \eqref{two parts} determines the posterior concentration properties. Moreover, it can also be shown \citep{van2008reproducing} that up to constants, $\varphi_{w_0}(\varepsilon)$ equals $-\log P(\|W-w_0\|<\varepsilon)$, so the rate of contraction of the true function is completely determined by the prior mass around the truth.

\subsection{Nonparametric Regression with Fixed Design and Additive Gaussian Errors} \label{section2.4}
In the current  work we assume that given a  deterministic function $w:T \to \mathbb{R}$, the data $Y_1, \ldots ,Y_n$ are independently generated by $Y_j = w(x_j)+\varepsilon_j$, for fixed, known $x_j \in T$ and independent $\varepsilon_j \sim N(0,\sigma_0^2)$, with $\sigma_0$ known and fixed. A prior on $w$ is induced by setting $w(x) = W_x$, for a Gaussian process $(W_x : x\in T)$. Then $w$ can be treated as the sample function of the Gaussian process.

By Theorem 1 in \cite{van2011information}, for $w_0 \in C_b(T)$, where $C_b(T)$ is the set of bounded, continuous functions on the compact metric space $T$, one has:
\begin{equation}
E_{w_0} \int \|w-w_0\|_n^2 d \Pi_n(w\mid Y_1,\ldots,Y_n) \lesssim \Psi^{-1}_{w_0}(n)^2,\label{eq:van}
\end{equation}
where $\Psi_{w_0}(\varepsilon)=\frac{\varphi_{w_0}(\varepsilon)}{\varepsilon^2}$, the Banach norm in the concentration function is the supremum norm $\|\cdot\|_{\infty}$ and $\Psi_{w_0}^{-1}({l})=\sup\{\varepsilon>0:\Psi_{w_0}(\varepsilon)\ge l\}$, which shows that the posterior distribution contracts at the rate $\Psi^{-1}_{w_0}(n)$ around the true response function $w_0$. %{\color{red} (how did $\varepsilon$ become $\varepsilon_n$?)}

\section{Posterior Contraction Rates for Isotropic Cases}\label{section3}
In this section we study Gaussian process priors with rescaled isotropic Mat\'ern  and CH covariance functions. Section \ref{rescaled measure section} introduces results describing their RKHSs. In  Sections \ref{section3.2} and \ref{section3.3}, we obtain results
illustrating their small deviation behavior and the approximation properties of their
RKHSs. Minimax optimal rates of convergence for the respective posteriors are obtained by  applying the general theory of Section \ref{section2.3} to the nonparametric regression with fixed design described in Section \ref{section2.4}.
In Section \ref{section3.4}, we discuss the hierarchical  Mat\'ern  and CH process priors and show these hierarchical Bayesian procedures also yield minimax optimal rates of convergence, over a range of regularity values for the true function.

%In Sections~\ref{section3.2}--\ref{section3.4}, for simplicity, we only show the result for $T=[0,1]^d$ case. However, by the same method of proof, we can easily extend those results to the case $T$ is  a bounded Lipschitz domain in $\mathbb{R}^d$ by  applying  Theorem 2.7.4 in \cite{wellner2013weak}  in our proofs of Lemmas \ref{matern small ball} and \ref{CH small ball}.   

\subsection{RKHSs of Rescaled Stationary Gaussian Processes}\label{rescaled measure section}
We consider  a mean zero stationary Gaussian process $W=(W_t: t \in T)$ with covariance function $K(s,s+h)=C(h)$, where $T \subset \mathbb{R}^d$.
By Bochner's theorem, the function $C(\cdot)$ is representable as the characteristic function
$
C(t)=\int e^{-i <\lambda,t>} d \mu(\lambda),
$
of a symmetric, finite measure $\mu$ on $\mathbb{R}^d$, termed the spectral measure of the process $W$.   By Lemma 4.1 of \cite{van2009adaptive}, the RKHS of a stationary Gaussian  process $W$ is  the space of all (real parts of) functions of the form:
\begin{equation}
(\mathcal{F}\psi)(t)= \int e^{{i} <\lambda, t>}\psi(\lambda)  d\mu(\lambda), \label{fourier trans}
\end{equation}
where $\psi$ ranges over $L_2(\mu)$, and  the squared RKHS-norm is given by:
\begin{equation}
    \|\mathcal{F} \psi\|_{\mathbb{H}}^2  =\inf _{g: \mathcal{F}g=\mathcal{F}\psi} \int|g|^2(\lambda)  d\mu (\lambda). \label{rkhs norm}
\end{equation}
The infimum is unnecessary if the spectral density has exponential or lighter tails, but is necessary in our case.

Now we define the rescaled version $W^c$ of the process $W$ by setting $W_t^c=W_{t / c},\; c>0$, with $W$ denoting the process with $c=1$.

 Following  \cite{van2007bayesian}, the spectral measure $\mu_c$ of the rescaled process $W^c$ is obtained by rescaling the spectral measure $\mu$ of $W$ as:
$$
\mu_c(B)=\mu(c B),
$$
where $B$ is any Borel set with respect to $\mu$.
Denote by $\mathcal{F}_c h$ the transform $\mathcal{F}_c h: \mathbb{R}^d \rightarrow \mathbb{C}$ of the function $h \in L_2\left(\mu_c\right)$:
\begin{equation}\label{rescaled matern map}
\left(\mathcal{F}_c h\right)(t)=\int e^{i <\lambda,t>} h(\lambda) d \mu_c(\lambda).
\end{equation}
Then $\mathcal{F}_c$ maps $L_2\left(\mu_c\right)$ into the space $C(\mathbb{R}^d)$ \citep{van2007bayesian}. 

For the Mat\'ern  class, let $W^{\phi}$ be the process with Mat\'ern covariance function having parameter $\phi$ as in~\eqref{eq:Matern}, and $W_t^{\phi}=W_{t/\phi}$, this  means that the Mat\'ern-$\phi$ process has the interpretation of a Mat\'ern-1 process whose sample paths are rescaled by $\phi$. Then $\phi$ is the scale parameter, and we can define the rescaled spectral measure $\mu_\phi$ and transform $\mathcal{F}_\phi$ as before. For the CH class, let $W^{\alpha, \beta}$ be the process with the CH covariance function  (\ref{eq:CH}) having parameters $\alpha$ and $\beta$. If both $\alpha$ and $\beta$ are free to vary (with sample size $n$), we can not find process $\tilde{W}$ and $c$, such that  $W^{\alpha, \beta}_t=\tilde W_{t/c}$, so we can not define the rescaled spectral measure as in the Mat\'ern case. Similar to setting $d \mu_{\phi}(\lambda) = m^{\phi}_M d \lambda $  in the Mat\'ern  case, for CH class, we  set $d \mu_{\alpha,\beta}(\lambda)=m^{\alpha,\beta}_{CH} d\lambda$ and denote by $\mathcal{F}_{(\alpha,\beta)} h$ the transform $\mathcal{F}_{(\alpha,\beta)}  h: \mathbb{R}^d \rightarrow \mathbb{C}$ of the function $h$:
\begin{equation}\label{rescaled CH map}
\left(\mathcal{F}_{(\alpha,\beta)} h\right)(t)=\int e^{i <\lambda,t>} h(\lambda)  d \mu_{\alpha,\beta}(\lambda).
\end{equation}

The following lemma describes the RKHS $\mathbb{H}^{\phi}$ of the process $\left(W_t^{\phi}: t \in \mathcal{T}\right)$ and RKHS $\mathbb{H}^{\alpha,\beta}$ of the process $\left(W_t^{\alpha,\beta}: t \in \mathcal{T}\right)$. We also denote the unit ball in $\mathbb{H}^{\phi}$ by $\mathbb{H}_1^{\phi}$ and the unit ball in $\mathbb{H}^{\alpha,\beta}$ by $\mathbb{H}_1^{\alpha,\beta}$.
\begin{lemma} \label{rescaled measure}
   If  $W$ is a centered stationary Gaussian process with Mat\'ern covariance function~\eqref{eq:Matern}, the RKHS of the  process $\left(W_t^{\phi}:  t \in  \mathcal{T}\right)$ is the set of real parts of all transforms $\mathcal{F}_{\phi} h$ (restricted to  $ \mathcal{T} \subset \mathbb{R}^d$) of functions $h \in L_2\left(\mu_{\phi}\right)$, equipped with the square norm:
\begin{equation}\label{RKHS of rescaled matern}
\left\|\mathcal{F}_{\phi} h\right\|_{\mathbb{H}^{\phi}}^2 =\inf _{g: \mathcal{F}_{\phi} g=\mathcal{F}_{\phi} h} \|g\|_{L_2\left(\mu_{\phi}\right)}^2=\inf _{g: \mathcal{F}_{\phi} g=\mathcal{F}_{\phi} h}\int|g|^2(\lambda) d \mu_{\phi}(\lambda) .
\end{equation}
For centered Gaussian process with CH covariance function (\ref{eq:CH}),  the RKHS of process $\left(W_t^{\alpha,\beta}: t \in  \mathcal{T}\right)$  is the set of real parts of all transforms $\mathcal{F}_{(\alpha,\beta)} h$ (restricted to  $ \mathcal{T} \subset \mathbb{R}^d$) of functions $h \in L_2\left(\mu_{\alpha,\beta}\right)$, equipped with the square norm:
\begin{equation}\label{RKHS of rescaled CH}
\left\|\mathcal{F}_{\alpha,\beta} h\right\|_{\mathbb{H}^{\alpha,\beta}}^2 =\inf _{g: \mathcal{F}_{(\alpha,\beta)}  g=\mathcal{F}_{(\alpha,\beta)}  h} \|g\|_{L_2\left(  \mu_{\alpha,\beta}\right)}^2=\inf _{g: \mathcal{F}_{(\alpha,\beta)} g=\mathcal{F}_{(\alpha,\beta)}  h}\int|g|^2(\lambda) d \mu_{\alpha,\beta}(\lambda) .
\end{equation}
\end{lemma}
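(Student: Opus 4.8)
The plan is to obtain both statements as instances of the general characterization of the RKHS of a stationary Gaussian process through its spectral measure --- Lemma~4.1 of \citet{van2009adaptive}, as recalled in \eqref{fourier trans}--\eqref{rkhs norm} --- so that the only process-specific work is to identify the spectral measures of the rescaled Mat\'ern and CH processes and to check that the hypotheses of that general result hold in the $C(\mathcal{T})$-valued setting considered here.

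First I would verify that $\mu_\phi$ and $\mu_{\alpha,\beta}$, defined through $d\mu_\phi=m^{\phi}_M\,d\lambda$ and $d\mu_{\alpha,\beta}=m^{\alpha,\beta}_{CH}\,d\lambda$ in Section~\ref{rescaled measure section}, really are the spectral measures of $W^\phi$ and $W^{\alpha,\beta}$. For the Mat\'ern family, $W_t^\phi=W_{t/\phi}^1$ gives $M(h;v,\phi,\sigma^2)=M(h/\phi;v,1,\sigma^2)$; writing the right-hand side by Bochner's theorem against the spectral density $m^{1}_M$ of the Mat\'ern-$1$ process and substituting $u=\lambda/\phi$ exhibits the covariance of $W^\phi$ as the characteristic function of the measure with Lebesgue density $u\mapsto\phi^{d}m^{1}_M(\phi u)$, which is $m^{\phi}_M$; the resulting measure satisfies $\mu_\phi(B)=\mu_1(\phi B)$, in agreement with the rescaling rule $\mu_c(B)=\mu(cB)$ of \citet{van2007bayesian}. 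For the CH family, since $\alpha$ is held fixed, \eqref{eq:CH} shows $C(h;v,\alpha,\beta,\sigma^2)=C(h/\beta;v,\alpha,1,\sigma^2)$, and the CH-$1$ function is a valid stationary isotropic covariance by \citet{ma2022beyond}; hence $W^{\alpha,\beta}$ is the $\beta$-rescaling of the stationary process $W^{\alpha,1}$, and the same change-of-variables argument identifies its spectral measure as $\mu_{\alpha,\beta}$ with density $m^{\alpha,\beta}_{CH}$. The explicit forms of $m^{\phi}_M$ and $m^{\alpha,\beta}_{CH}$ used here are those recorded in Appendix~\ref{Ancillary}.

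With the spectral measures in hand, both $W^\phi$ and $W^{\alpha,\beta}$ take their values in $C(\mathcal{T})$ --- for Mat\'ern by the statement of \citet{van2011information} cited in Section~\ref{section2.3}, and for CH by the analogous argument --- so the general characterization applies with index set $\mathcal{T}$ and Banach space $(C(\mathcal{T}),\|\cdot\|_\infty)$, while $\mathcal{F}_\phi$ and $\mathcal{F}_{(\alpha,\beta)}$ map $L^2(\mu_\phi)$ and $L^2(\mu_{\alpha,\beta})$ into $C(\mathbb{R}^d)$ by \citet{van2007bayesian}. Restricting these transforms to $\mathcal{T}$ and taking real parts then yields exactly \eqref{RKHS of rescaled matern} and \eqref{RKHS of rescaled CH}, including the minimum in the norm. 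I would then spell out why the minimum is not superfluous here: because $\mathcal{T}\subsetneq\mathbb{R}^d$, the map $h\mapsto(\mathcal{F}_\phi h)|_{\mathcal{T}}$ is not injective on $L^2(\mu_\phi)$, so a given RKHS element arises from many $h$, and its norm equals the $L^2(\mu_\phi)$-norm of the orthogonal projection of any such $h$ onto the closed linear span of $\{e^{-i\langle\cdot,t\rangle}:t\in\mathcal{T}\}$ in $L^2(\mu_\phi)$, which is the minimum appearing in the statement (and similarly for CH); by contrast, the exponential spectral tail of the squared-exponential covariance in \citet{van2007bayesian} forces $\mathcal{F}_\phi h$ to be real-analytic, hence determined by its values on the open set $\mathcal{T}$, so there the restricted transform is injective and the minimum can be dropped --- this analyticity fails for the polynomially-tailed Mat\'ern and CH spectral densities, which is precisely why the minimum must be retained.

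I expect the only real difficulty to be bookkeeping rather than any single estimate: carefully transferring the $\mathbb{R}^d$-indexed statement of \citet{van2009adaptive} to the $\mathcal{T}$-restricted, $C(\mathcal{T})$-valued setting, and confirming that the orthogonal-projection description makes the minimum in \eqref{RKHS of rescaled matern}--\eqref{RKHS of rescaled CH} well defined and attained.
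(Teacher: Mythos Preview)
Your proposal is correct and follows exactly the paper's approach: the paper simply states that the lemma is a direct consequence of Lemma~4.1 of \citet{van2009adaptive} and omits the proof, while you spell out the routine verifications (identification of $\mu_\phi$ and $\mu_{\alpha,\beta}$ as spectral measures via Bochner and the change of variables, continuity of sample paths, and the reason the minimum cannot be dropped) that make that citation go through. There is no substantive difference in method.
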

The proof is a direct consequence of  Lemma 4.1 of \cite{van2009adaptive} and is therefore omitted. 
\subsection{Posterior Contraction Rates for the Rescaled Mat\'ern Class}\label{section3.2}
The following lemma studies the small ball probability of the rescaled Mat\'ern class. We establish this  lemma by the fact that the small ball exponent can be obtained from the metric entropy of  unit ball $\mathbb{H}_1$ of the RKHS for the Gaussian process $W$ \citep{li1999approximation}. In our proof, we also show that the RKHS  of the rescaled Mat\'ern class is approximately a Sobolev space $H^{v+d/2}(\mathcal{T})$, with a rescaling factor. 
\begin{lemma}
Suppose $\phi < 1$. There exists an $\varepsilon_0>0$, independent of $\phi$, such that 
the small ball exponent of the rescaled  centered Mat\'ern 
 process $W^\phi$ with  covariance function  (\ref{eq:Matern}) satisfies,
$$
\varphi_0(\varepsilon) = -\log P(\| W^\phi\|_{\infty} \le \varepsilon)= - \log P(\sup_{t \in \mathcal{T}} |W_t^\phi| \le \varepsilon) 
 \lesssim \varepsilon^{-d/v} \phi^{-d},
 $$
for $\varepsilon \in (0,\varepsilon_0)$.
 \label{matern small ball}
\end{lemma}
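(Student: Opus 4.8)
I would prove Lemma~\ref{matern small ball} through the metric-entropy characterization of Gaussian small-ball probabilities, carrying the powers of the rescaling parameter $\phi$ explicitly through every step. By the Kuelbs--Li / Li--Linde theory \citep{li1999approximation} (see also \citet{van2008reproducing}), if the unit ball $\mathbb{H}_1^\phi$ of the RKHS of $W^\phi$ satisfies $\log N(\varepsilon,\mathbb{H}_1^\phi,\|\cdot\|_\infty)\le A_\phi\,\varepsilon^{-\alpha}$ for some fixed $\alpha\in(0,2)$ and all $\varepsilon\in(0,\varepsilon_0)$, then $\varphi_0(\varepsilon)=-\log P(\|W^\phi\|_\infty\le\varepsilon)\lesssim A_\phi^{2/(2-\alpha)}\,\varepsilon^{-2\alpha/(2-\alpha)}$ on an interval whose right endpoint depends only on $\varepsilon_0$ and $\alpha$; the exponent $2/(2-\alpha)$ on $A_\phi$ is the one forced by the homogeneity of both sides under $W^\phi\mapsto\sigma W^\phi$. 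So the task reduces to reading off $\alpha$ and $A_\phi$ from an entropy bound for $\mathbb{H}_1^\phi$.

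To that end I would use Lemma~\ref{rescaled measure} and \eqref{RKHS of rescaled matern}: for $f=\mathrm{Re}\,\mathcal{F}_\phi h$ one has $\|f\|_{\mathbb{H}^\phi}^2=\min_g\int|g|^2\,d\mu_\phi$ over $g$ with $\mathcal{F}_\phi g=\mathcal{F}_\phi h$ on $\mathcal{T}$, where $d\mu_\phi=m_M^\phi\,d\lambda$ and, from the Mat\'ern spectral density recorded in Appendix~\ref{Ancillary}, $m_M^\phi(\lambda)\asymp\phi^{d}(1+\phi^2\|\lambda\|^2)^{-(v+d/2)}$. Consequently any $f\in\mathbb{H}_1^\phi$ has an extension $\tilde f$ to $\mathbb{R}^d$ with $\int|\hat{\tilde f}(\lambda)|^2/m_M^\phi(\lambda)\,d\lambda\le1$, i.e. $\int(1+\phi^2\|\lambda\|^2)^{v+d/2}|\hat{\tilde f}(\lambda)|^2\,d\lambda\lesssim\phi^{d}$; since $\phi<1$ gives $(1+\phi^2\|\lambda\|^2)^{v+d/2}\ge\phi^{2v+d}(1+\|\lambda\|^2)^{v+d/2}$, this yields $\|f\|_{H^{v+d/2}(\mathcal{T})}\le\|\tilde f\|_{2,2,v+d/2}\lesssim\phi^{-v}$, so $\mathbb{H}_1^\phi$ sits inside a ball of radius $\asymp\phi^{-v}$ in $H^{v+d/2}(\mathcal{T})$. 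This inclusion is all the small-ball \emph{upper} bound needs; one can moreover upgrade it to a two-sided equivalence, so that $\mathbb{H}^\phi$ is $H^{v+d/2}(\mathcal{T})$ up to the $\phi$-rescaling of its norm, using a bounded Sobolev extension operator for the convex bounded Lipschitz domain $\mathcal{T}$, or equivalently by transferring to the unrescaled process on the dilated domain via $\sup_{t\in\mathcal{T}}|W^\phi_t|=\sup_{s\in\phi^{-1}\mathcal{T}}|W^1_s|$.

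Then, since $v+d/2>d/2$, the classical Birman--Solomjak-type metric-entropy estimate for Sobolev balls on bounded Lipschitz domains, $\log N(\varepsilon,\{g:\|g\|_{H^s(\mathcal{T})}\le R\},\|\cdot\|_\infty)\lesssim(R/\varepsilon)^{d/s}$ for $\varepsilon\le R$, applied with $s=v+d/2$ and $R\asymp\phi^{-v}\ge1$, gives
\begin{equation*}
\log N(\varepsilon,\mathbb{H}_1^\phi,\|\cdot\|_\infty)\ \lesssim\ \bigl(\phi^{-v}\varepsilon^{-1}\bigr)^{2d/(2v+d)}\ =\ \phi^{-2vd/(2v+d)}\,\varepsilon^{-2d/(2v+d)}
\end{equation*}
for all $\varepsilon$ below a threshold that does not depend on $\phi$. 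Feeding $\alpha=2d/(2v+d)\in(0,2)$ and $A_\phi\asymp\phi^{-2vd/(2v+d)}$ into the first paragraph and simplifying, $2\alpha/(2-\alpha)=d/v$ and $A_\phi^{2/(2-\alpha)}\asymp\phi^{-d}$, which gives exactly $\varphi_0(\varepsilon)\lesssim\varepsilon^{-d/v}\phi^{-d}$ on $(0,\varepsilon_0)$ with $\varepsilon_0$ independent of $\phi$.

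The main obstacle is uniformity in $\phi$. The spectral-density asymptotics for $m_M^\phi$ must hold with $\phi$-free constants (they do, being a scaling of the Mat\'ern-$1$ density), the Birman--Solomjak bound must be invoked with the $\phi$-dependent radius $\phi^{-v}$ while $\varepsilon_0$ stays fixed (harmless as $\phi^{-v}\ge1$), and --- the genuinely delicate point --- the Li--Linde passage from entropy to small-ball probability must yield a final constant that depends on the entropy constant $A_\phi$ only through the fixed power $2/(2-\alpha)$ and must hold on a $\phi$-free $\varepsilon$-interval; establishing this requires either examining the constants in \citet{li1999approximation} or applying the translation to a normalized copy of $W^\phi$ and using scaling. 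The remaining ingredients are routine Fourier-analytic and entropy bookkeeping.
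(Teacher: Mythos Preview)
Your proposal is correct and follows essentially the same route as the paper: RKHS unit ball $\subset$ Sobolev ball of radius $\asymp\phi^{-v}$ in $H^{v+d/2}(\mathcal{T})$, then the Birman--Solomjak/van der Vaart--Wellner entropy bound, then Li--Linde to convert entropy into the small-ball exponent, with the same arithmetic $\alpha=2d/(2v+d)$, $A_\phi\asymp\phi^{-2vd/(2v+d)}$ leading to $\varepsilon^{-d/v}\phi^{-d}$.

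The one point you leave slightly open---uniformity of the $\varepsilon$-interval in $\phi$---is handled in the paper exactly along the first of your two suggested routes: from the proof of Proposition~3.1 in \citet{li1999approximation} the entropy-to-small-ball implication is valid for all $\varepsilon$ satisfying $\phi^{vd/(v+d/2)}\lesssim(\varphi_0(\varepsilon/2))^{d/(2(v+d/2))}\varepsilon^{-d/(v+d/2)}$; since $\phi<1$ and $W_t^\phi=W_{t/\phi}$, one has $\varphi_0^\phi(\varepsilon/2)\ge\varphi_0^{1}(\varepsilon/2)$, so the right-hand side is bounded below by a $\phi$-free quantity, yielding a $\phi$-independent $\varepsilon_0$.
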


The following lemma quantifies how well $\eta$-regular functions can be approximated by elements in the RKHS of the rescaled Mat\'ern process. Appealing to \cite{van2007bayesian}, we introduce parameter $\theta>1$ to be determined. This parameter is crucial in our proof, and with larger $\theta$  we have better RKHS approximation performance, while with smaller $\theta$ we have smaller small ball exponent. By tuning $\theta$,  we balance small ball exponent  and decentering parts, and obtain the minimax optimal posterior contraction rate.

\begin{lemma}
Suppose $w_0 \in C^{\eta}(\mathcal{T}) \cap H^{\eta}(\mathcal{T})$. Suppose the smoothness parameter $v$ of   rescaled centered Mat\'ern process $W^{\phi}$  with  covariance function  (\ref{eq:Matern})  satisfies  $ v \ge \eta >0$.  Then for $\theta > \frac{2v+d}{2v+d-2 \eta}$, we have:
$$
\inf_{h \in \mathbb{H}^{\phi}:\|h-w_0\|_{\infty}\le C_{w_0}\phi^{\theta \eta}}\|h\|_{\mathbb{H}^{\phi}}^2 \le D_{w_0}\phi^{2v-2\theta(v+d/2-\eta)},
$$
as $\phi \downarrow 0$, where $C_{w_0}$,$D_{w_0}$ only depend on $w_0$.
 \label{matern decentering}
\end{lemma}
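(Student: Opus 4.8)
\emph{Proof strategy.} The plan is to build an explicit near-interpolant of $w_0$ inside $\mathbb{H}^{\phi}$ by band-limited mollification, in the spirit of \cite{van2007bayesian}, exploiting the polynomial decay of the Mat\'ern spectral density $m^{\phi}_M$ and the description of $\mathbb{H}^{\phi}$ from Lemma~\ref{rescaled measure}. The key quantitative input is that, from the explicit Mat\'ern spectral density, $1/m^{\phi}_M(\lambda)\asymp \phi^{-d}(1+\phi^{2}\|\lambda\|^{2})^{v+d/2}$, with constants depending only on $v$ and $d$.

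First I would reduce to a compactly supported function on $\mathbb{R}^d$: since $\mathcal{T}$ is a convex bounded Lipschitz domain, a Stein-type extension operator yields $f\in C^{\eta}(\mathbb{R}^d)\cap H^{\eta}(\mathbb{R}^d)$ with $f|_{\mathcal{T}}=w_0$ and $\|f\|_{C^{\eta}}+\|f\|_{2,2,\eta}\lesssim \|w_0\|_{C^{\eta}(\mathcal{T})}+\|w_0\|_{H^{\eta}(\mathcal{T})}$; multiplying by a fixed $C_c^{\infty}$ cutoff equal to $1$ on a neighbourhood of $\mathcal{T}$ makes $f$ compactly supported without changing $f|_{\mathcal{T}}$ or the displayed bound. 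Next I fix a real-valued band-limited kernel $K$ with $\widehat K\in C_c^{\infty}(\mathbb{R}^d)$, $\widehat K\equiv 1$ on $\{\|\lambda\|\le 1\}$ and $\operatorname{supp}\widehat K\subset\{\|\lambda\|\le 2\}$; then $\int K=1$, $K$ decays faster than any polynomial, and all moments of $K$ of order $\ge 1$ vanish. With $K_\delta(\cdot)=\delta^{-d}K(\cdot/\delta)$, I take the candidate to be $h:=(f * K_{\phi^{\theta}})|_{\mathcal{T}}$, whose extension $g_\phi:=f * K_{\phi^{\theta}}$ to $\mathbb{R}^d$ satisfies $\widehat{g_\phi}(\lambda)=\widehat f(\lambda)\,\widehat K(\phi^{\theta}\lambda)$, supported in $\{\|\lambda\|\le 2\phi^{-\theta}\}$.

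Two estimates finish the argument. \emph{(a) Approximation.} The standard mollification bound for H\"older functions against a kernel with vanishing moments gives $\|g_\phi-f\|_{\infty}\lesssim \phi^{\theta\eta}\|f\|_{C^{\eta}}$, hence $\|h-w_0\|_{\infty}\le C_{w_0}\phi^{\theta\eta}$ on $\mathcal{T}$. \emph{(b) RKHS norm.} Since $\widehat{g_\phi}$ is compactly supported and $1/m^{\phi}_M$ is locally bounded, $\widehat{g_\phi}/m^{\phi}_M\in L^2(\mu_\phi)$ and its $\mathcal{F}_\phi$-transform equals $g_\phi$; by Lemma~\ref{rescaled measure}, $h\in\mathbb{H}^{\phi}$ and
\begin{equation*}
\|h\|_{\mathbb{H}^{\phi}}^{2}\ \le\ \int \frac{|\widehat{g_\phi}(\lambda)|^{2}}{m^{\phi}_M(\lambda)}\,d\lambda\ \lesssim\ \phi^{-d}\Bigl(\,\sup_{\|\lambda\|\le 2\phi^{-\theta}}\frac{(1+\phi^{2}\|\lambda\|^{2})^{v+d/2}}{(1+\|\lambda\|^{2})^{\eta}}\Bigr)\,\|f\|_{2,2,\eta}^{2}.
\end{equation*}
An elementary check shows $u\mapsto (1+\phi^{2}u^{2})^{v+d/2}(1+u^{2})^{-\eta}$ is decreasing then increasing on $[0,\infty)$ (using $v+d/2>\eta$, which holds since $v\ge\eta$), so its supremum over $[0,2\phi^{-\theta}]$ is attained at the endpoint $u=2\phi^{-\theta}$ and is of order $\phi^{2(v+d/2)-2\theta(v+d/2-\eta)}$ as $\phi\downarrow 0$; the hypothesis $\theta>\frac{2v+d}{2v+d-2\eta}$ is exactly what forces this order to be $\gtrsim 1$, so that the endpoint (not $u=0$) controls the supremum. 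Substituting, using $\|f\|_{2,2,\eta}^{2}\lesssim(\|w_0\|_{C^{\eta}(\mathcal{T})}+\|w_0\|_{H^{\eta}(\mathcal{T})})^{2}=:D_{w_0}$, and simplifying exponents ($-d+2(v+d/2)-2\theta(v+d/2-\eta)=2v-2\theta(v+d/2-\eta)$) yields $\|h\|_{\mathbb{H}^{\phi}}^{2}\lesssim D_{w_0}\,\phi^{2v-2\theta(v+d/2-\eta)}$. Since $h$ is admissible in the infimum by (a), the claim follows.

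The crux I expect is (b): the extension must be set up so that both the H\"older norm (for the sup-norm approximation in (a)) and the Sobolev norm $\|f\|_{2,2,\eta}$ (for finiteness of the spectral integral) are controlled by $w_0$, and one must track how the truncation radius $\phi^{-\theta}$ interacts with the smoothness gap $v+d/2-\eta$; it is precisely this interaction that both produces the stated power of $\phi$ and pins down the lower bound required on $\theta$ — for smaller $\theta$ the supremum above is $\asymp 1$ and the bound degrades to the $\theta$-free estimate $\phi^{-d}D_{w_0}$. Part (a) and the membership claim in (b) are routine once the band-limited kernel is fixed and Lemma~\ref{rescaled measure} is invoked.
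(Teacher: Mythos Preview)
Your proposal is correct and follows essentially the same approach as the paper: band-limited mollification of $w_0$ at scale $\phi^{\theta}$, H\"older regularity for the sup-norm error, and compact frequency support together with the Sobolev norm $\|w_0\|_{2,2,\eta}$ for the RKHS-norm bound, leading to the same two boundary values $\phi^{-d}$ and $\phi^{2v-2\theta(v+d/2-\eta)}$ and the same threshold on $\theta$. The only difference is cosmetic: you make the extension step explicit via a Stein operator and cutoff, whereas the paper leans on the definition of $H^{\eta}(\mathcal{T})$ as restrictions and cites the construction in \cite{ghosal2017fundamentals} and \cite{van2009adaptive}; this is a welcome clarification but not a different argument.
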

Now  combining the two preceding lemmas, for  $w_0 \in C^{\eta}(\mathcal{T}) \cap H^{\eta}(\mathcal{T})$ with $ \eta \leq v$, we obtain the following inequalities:
\begin{displaymath}
    \varepsilon_n^{-d/v} \phi^{-d} \lesssim n \varepsilon_n^2 , \quad \phi^{2v-2\theta(v+d/2-\eta)} \lesssim n \varepsilon_n^2, \quad\phi^{\theta \eta} \lesssim  \varepsilon_n,  \quad \theta > \frac{2v+d}{2v+d-2 \eta}.
\end{displaymath}
It suffices to solve:
\begin{displaymath}
   \varepsilon_n \ge \left( \frac{\phi^{2v}}{n}\right)^{\frac{\eta}{2v+d}}, \quad
   \varepsilon_n \ge \left( \frac{\phi^{-d}}{n}\right)^{\frac{v}{2v+d}}, 
\end{displaymath}
which leads to $\varepsilon_n \gtrsim n^{-\frac{\eta}{2\eta+d}}$, with equality attained when $\phi=n^{-\frac{v-\eta}{(2\eta+d)v}}$. Then by    an  application of \eqref{eq:van} in Section \ref{section2.4}, we obtain the following theorem. 

\begin{theorem}
Suppose we use a centered Mat\'ern prior  with  covariance function  (\ref{eq:Matern}),  $0<\phi<1$,  $w_0 \in C^{\eta}(\mathcal{T}) \cap H^{\eta}(\mathcal{T})$ and $v \ge \eta>0$.  If $\phi=n^{-\frac{v-\eta}{(2\eta+d)v}}$,  then  for nonparametric regression with fixed design and additive Gaussian errors, 
$$
E_{w_0} \int \|w-w_0\|_n^2 d \Pi_n(w\mid Y_1,\ldots,Y_n) \lesssim (n^{-\frac{\eta}{2\eta+d}})^2,
$$
i.e. the posterior contracts at the rate $n^{-\frac{\eta}{2\eta+d}}$.  \label{rescale matern theorem}
\end{theorem}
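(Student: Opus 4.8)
The plan is to exhibit a sequence $\varepsilon_n$ for which the concentration-function inequality $\varphi_{w_0}(\varepsilon_n)\le n\varepsilon_n^2$ of Section~\ref{section2.3} holds for the rescaled Mat\'ern prior with lengthscale $\phi=\phi_n$, and then to read off the contraction rate from the general bound \eqref{eq:van} of \cite{van2011information} for the fixed-design regression model of Section~\ref{section2.4}. We may take $v>\eta$ throughout, since at $v=\eta$ the prescribed $\phi_n=n^{-(v-\eta)/((2\eta+d)v)}$ is a fixed constant and the statement reduces to the classical unrescaled Mat\'ern result; then $\phi_n\in(0,1)$ for large $n$, so Lemmas~\ref{matern small ball} and~\ref{matern decentering} are available.

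First I would assemble the two pieces of $\varphi_{w_0}$. By Lemma~\ref{matern small ball} the small-ball part is $\lesssim\varepsilon_n^{-d/v}\phi_n^{-d}$. For the decentering part, apply Lemma~\ref{matern decentering} with a parameter $\theta>(2v+d)/(2v+d-2\eta)$ left free for the moment: it yields $h\in\mathbb{H}^{\phi_n}$ with $\|h-w_0\|_\infty\le C_{w_0}\phi_n^{\theta\eta}$ and $\|h\|_{\mathbb{H}^{\phi_n}}^2\le D_{w_0}\phi_n^{2v-2\theta(v+d/2-\eta)}$, and, provided $C_{w_0}\phi_n^{\theta\eta}\le\varepsilon_n$ (which only inflates $\varepsilon_n$ by an absorbable constant), such $h$ is feasible in the infimum defining $\varphi_{w_0}(\varepsilon_n)$. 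So it suffices to arrange that
\begin{displaymath}
\varepsilon_n^{-d/v}\phi_n^{-d}\lesssim n\varepsilon_n^2,\qquad
\phi_n^{2v-2\theta(v+d/2-\eta)}\lesssim n\varepsilon_n^2,\qquad
\phi_n^{\theta\eta}\lesssim\varepsilon_n
\end{displaymath}
hold at once.

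Next I would eliminate $\theta$. Raising $\phi_n^{\theta\eta}\lesssim\varepsilon_n$ to the power $2(v+d/2-\eta)/\eta$ and multiplying by the second relation gives the $\theta$-free bound $\phi_n^{2v}\lesssim n\,\varepsilon_n^{(2v+d)/\eta}$, i.e.\ $\varepsilon_n\gtrsim(\phi_n^{2v}/n)^{\eta/(2v+d)}$, while the first relation says $\varepsilon_n\gtrsim(\phi_n^{-d}/n)^{v/(2v+d)}$. Balancing these two lower bounds pins down $\phi_n\asymp n^{-(v-\eta)/((2\eta+d)v)}$, and then both equal $n^{-\eta/(2\eta+d)}$, so $\varepsilon_n\asymp n^{-\eta/(2\eta+d)}$ suffices. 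Finally one must confirm the eliminated $\theta$ exists: with the above $\phi_n$, the choice $\theta=v/(v-\eta)$ makes the second and third displayed relations hold as equalities up to constants, and it is admissible because $v/(v-\eta)>(2v+d)/(2v+d-2\eta)$ reduces, on clearing the positive denominators, to $d\eta>0$. Having thus verified $\varphi_{w_0}(\varepsilon_n)\le n\varepsilon_n^2$ at $\varepsilon_n\asymp n^{-\eta/(2\eta+d)}$, we get $\Psi_{w_0}^{-1}(n)\lesssim\varepsilon_n$, and \eqref{eq:van} gives $E_{w_0}\int\|w-w_0\|_n^2\,d\Pi_n(w\mid Y_1,\dots,Y_n)\lesssim(\Psi_{w_0}^{-1}(n))^2\lesssim n^{-2\eta/(2\eta+d)}$, as claimed.

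I expect the only real obstacle to be this last compatibility check for $\theta$: one needs a single exponent that is simultaneously large enough for $\phi_n^{\theta\eta}\le\varepsilon_n$, small enough for $\phi_n^{2v-2\theta(v+d/2-\eta)}\le n\varepsilon_n^2$, and above the threshold $(2v+d)/(2v+d-2\eta)$ required by Lemma~\ref{matern decentering}. Once $\phi_n$ is fixed by the balancing, the first two requirements squeeze $\theta$ to the single value $v/(v-\eta)$, and checking that this lies above the threshold --- which is precisely where the strict inequality $v>\eta$ is used --- is the delicate point; everything else is routine exponent bookkeeping.
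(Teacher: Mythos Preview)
Your proposal is correct and follows essentially the same route as the paper: combine Lemmas~\ref{matern small ball} and~\ref{matern decentering} to obtain the three displayed inequalities in $\varepsilon_n,\phi,\theta$, eliminate $\theta$ to reduce to the two bounds $\varepsilon_n\ge(\phi^{2v}/n)^{\eta/(2v+d)}$ and $\varepsilon_n\ge(\phi^{-d}/n)^{v/(2v+d)}$, balance to get $\phi=n^{-(v-\eta)/((2\eta+d)v)}$ and $\varepsilon_n\asymp n^{-\eta/(2\eta+d)}$, and conclude via \eqref{eq:van}. Your explicit verification that $\theta=v/(v-\eta)$ exceeds the threshold $(2v+d)/(2v+d-2\eta)$ is a detail the paper leaves implicit here (though it uses the same $\theta$ in the proof of Theorem~\ref{adaptive matern}), and your separate treatment of the degenerate endpoint $v=\eta$ is a sensible addition.
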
   

For $w_0$  defined on a compact subset of $\mathbb{R}^d $ with regularity $\eta> 0$, it is known $\varepsilon_n = n^{-\frac{\eta}{2 \eta+d}}$ is the minimax-optimal rate \citep{tsybakov2009introduction,stone1980optimal}. It follows that this is also the best possible bound for the risk in Section \ref{section2.4} if $w_0$ is a $\eta$-regular function of $d$ variables. Thus, in Theorem \ref{rescale matern theorem}, we have obtained minimax optimal rate.  \cite{van2008rates} show that for GP priors, it is typically true that
this optimal rate can only be attained if the regularity of the GP that is used matches the regularity
of $w_0$. Using a GP prior that is too rough or too smooth
harms the performance of the procedure. Compared to the Mat\'ern process prior with fixed scale parameter, which only obtains minimax optimal rate in the $v=\eta$ case \citep{van2011information}, our theorem extends to the case $v> \eta$. This is because by rescaling the parameter $\phi$, we successfully match the smoothness of  the Mat\'ern process prior to  $w_0$. Compared to the rescaled squared exponential  prior of \cite{van2007bayesian}, our theorem obtains the minimax optimal rate while their rate is   minimax optimal up to a logarithmic factor. A possible explanation is that the squared exponential process is infinitely smooth and Mat\'ern is finitely differentiable, even after rescaling. Thus, a rescaled Mat\'ern  prior can still capture a rough function better.

\cite{castillo2008lower} studies the lower bound of posterior contraction rate, and  finds it is determined by the concentration function $\varphi_{w_0}(\varepsilon_n)$. Larger concentration function implies slower contraction rate. For  $\mathcal{T}=[0,1]$, we  observe  when $\phi$ goes to $0$ very quickly, the sample path of $W_t^\phi$ shrinks into the interval $[0,1]$, and intuitively,  the small ball part of the concentration function
$\varphi_0(\varepsilon) = - \log P(\sup_{t \in \mathcal{T}} |W_t^\phi| \le \varepsilon)$ 
goes to infinity quickly. This slows down the posterior contraction rate and leads to a suboptimal rate. Under suitable conditions, the posterior even  fails to contract around the truth. The following theorem validates this observation for  the case when $\mathcal{T}$ is a convex bounded Lipschitz domain in $\mathbb{R}^d$.
\begin{theorem}
Suppose we use a centered Mat\'ern prior  with  covariance function  (\ref{eq:Matern}).  Then  for nonparametric regression with fixed design and additive Gaussian errors, we have,
$$
  \varphi_0(\varepsilon )\gtrsim \phi^{-d} \varepsilon^{-d/v}. 
$$
When $v > \eta$ and $\phi = o(n^{-\frac{v-\eta}{(2\eta+d)v}})$, the posterior contraction rate is suboptimal. Furthermore, when $\phi^{-d} \gtrsim n$, 
$$
 \Pi_n(w:\|w-w_0\|_n \leq 1 \mid Y_1,\ldots,Y_n)  \to 0,
$$
in probability $P_0^n$, i.e., the posterior does not contract.
\label{lower bound of rescaled matern}
\end{theorem}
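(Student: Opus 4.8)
All three claims rest on one extra ingredient beyond what is already available: a \emph{lower} bound on the small ball exponent $\varphi_0(\varepsilon)=-\log P(\|W^\phi\|_\infty\le\varepsilon)$ matching the upper bound of Lemma~\ref{matern small ball} up to constants. The plan for this first claim is to argue, as the companion of Lemma~\ref{matern small ball}, through the two--sided correspondence between Gaussian small ball probabilities and the metric entropy of the RKHS unit ball \citep{li1999approximation}: if $\log N(\delta,\mathbb{H}^\phi_1,\|\cdot\|_\infty)\asymp\Lambda(\phi)\,\delta^{-2d/(2v+d)}$ on a fixed range of $\delta$, then $\varphi_0(\varepsilon)\asymp\Lambda(\phi)^{(2v+d)/(2v)}\varepsilon^{-d/v}$. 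The proof of Lemma~\ref{matern small ball} already produces the entropy estimate together with its dependence on the rescaling parameter; reading that identification in the opposite direction supplies the matching lower bound. The conceptual picture is that, since $P(\|W^\phi\|_{\infty,\mathcal{T}}\le\varepsilon)=P(\|W^1\|_{\infty,\mathcal{T}/\phi}\le\varepsilon)$, the relevant RKHS is that of the unrescaled Mat\'ern process on the dilated domain $\mathcal{T}/\phi$, a quotient Sobolev ball of order $v+d/2$ whose supremum--norm entropy scales with $\mathrm{vol}(\mathcal{T}/\phi)\asymp\phi^{-d}$; feeding this back through the correspondence returns $\varphi_0(\varepsilon)\gtrsim\phi^{-d}\varepsilon^{-d/v}$ for $\varepsilon$ in a fixed interval.

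For the second claim, note that $\varphi_{w_0}(\varepsilon)\ge\varphi_0(\varepsilon)\gtrsim\phi^{-d}\varepsilon^{-d/v}$, so by the posterior lower bound theory for Gaussian priors of \citet{castillo2008lower} the posterior cannot contract faster than the solution $\bar\varepsilon_n$ of $\varphi_{w_0}(\bar\varepsilon_n)\asymp n\bar\varepsilon_n^2$, whence $\bar\varepsilon_n\gtrsim(\phi^{-d}/n)^{v/(2v+d)}$. Writing $\phi=a_n\,n^{-(v-\eta)/((2\eta+d)v)}$ with $a_n\downarrow0$ and simplifying the exponent gives $\bar\varepsilon_n\gtrsim a_n^{-dv/(2v+d)}\,n^{-\eta/(2\eta+d)}$, which strictly exceeds the minimax rate $n^{-\eta/(2\eta+d)}$ of Theorem~\ref{rescale matern theorem} since $a_n^{-dv/(2v+d)}\to\infty$; the same arithmetic shows that $a_n\equiv1$ is exactly the borderline, which is why the rescaling in Theorem~\ref{rescale matern theorem} is sharp, and in particular the rate bound furnished by \eqref{eq:van} is no longer optimal.

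For the third claim, $\phi^{-d}\gtrsim n$ is equivalent to $\phi\lesssim n^{-1/d}$, so (for a regular design) the point spacing $\asymp n^{-1/d}$ dominates the correlation length $\asymp\phi$, and the Gram matrix $\Sigma_n=\bigl(M(|x_i-x_j|;v,\phi,\sigma^2)\bigr)_{i,j\le n}$ satisfies $\|\Sigma_n-\sigma^2 I_n\|_{\mathrm{op}}=o(1)$ and $\|\Sigma_n\|_F^2=o(n^2)$. Hence $\|W^\phi\|_n^2=\tfrac1n\sum_i(W^\phi_{x_i})^2$ has mean $\sigma^2$ and variance $2n^{-2}\|\Sigma_n\|_F^2=o(1)$, so it concentrates at $\sigma^2$, and a chi-square large deviation bound gives $-\log P(\|W^\phi\|_n\le\varepsilon)\gtrsim n$ for every fixed $\varepsilon$ below a positive constant determined by $\sigma^2$. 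Inserting this into the denominator/numerator decomposition of the posterior, exactly as in the lower bound argument complementing \eqref{eq:van}, shows that the critical radius in the empirical metric stays bounded away from $0$ (and diverges once $\phi^{-d}/n\to\infty$), so $\Pi_n(w:\|w-w_0\|_n\le1\mid Y_1,\ldots,Y_n)\to0$ in $P_0^n$--probability; equivalently, the small ball lower bound of the first step forces $\varphi_{w_0}(\varepsilon)\gtrsim n\,\varepsilon^{-d/v}$ on $(0,\varepsilon_0)$, which pushes the critical radius past any fixed constant.

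The crux is the first step: extracting the lower bound on $\varphi_0$ with the \emph{sharp} power $\phi^{-d}$, which forces one to control the metric entropy of the rescaled RKHS unit ball in $\|\cdot\|_\infty$ --- including its dependence on $\phi$ --- rather than merely a one-sided estimate as sufficed for Lemma~\ref{matern small ball}; getting the exponent of $\phi$ exactly right, rather than a larger power, is what the argument hinges on. A secondary technical point is the passage from the supremum norm, which is the natural setting for the concentration-function machinery, to the empirical norm $\|\cdot\|_n$ in the last step, which is why the direct chi-square argument there is cleaner than a black-box appeal to \citet{castillo2008lower}.
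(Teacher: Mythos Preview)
Your second claim (suboptimality when $\phi=o(n^{-(v-\eta)/((2\eta+d)v)})$) matches the paper's argument essentially verbatim: both invoke the Gaussian lower bound theory (Castillo / Theorem~11.23 in \cite{ghosal2017fundamentals}) and solve $\phi^{-d}\bar\varepsilon_n^{-d/v}\asymp n\bar\varepsilon_n^{\,2}$ for $\bar\varepsilon_n$, and your arithmetic with $a_n$ is correct.

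On the \emph{first} claim you take a genuinely different route. You propose to recover $\varphi_0(\varepsilon)\gtrsim\phi^{-d}\varepsilon^{-d/v}$ by running the Li--Linde entropy/small-ball correspondence in the opposite direction to Lemma~\ref{matern small ball}, which requires a matching \emph{lower} bound on $\log N(\delta,\mathbb H_1^\phi,\|\cdot\|_\infty)$ with the sharp $\phi$-dependence. Your volume heuristic on $\mathcal T/\phi$ is the right picture, but turning it into a rigorous entropy lower bound (constructing $\phi^{-d}\delta^{-2d/(2v+d)}$ well-separated functions in the rescaled RKHS unit ball) is nontrivial and is exactly the step you flag as ``the crux.'' The paper sidesteps this entirely: it observes that for $\|\lambda\|\ge\phi^{-1}$ the Mat\'ern spectral density satisfies $m_M^\phi(\lambda)\ge c_0\,\phi^{-2v}\|\lambda\|^{-(2v+d)}$ and then applies the corollary of \cite{lifshits1987small}, which converts a spectral-tail lower bound directly into a small-ball upper bound $P(\|W^\phi\|_\infty\le\varepsilon)\le\exp(-C\phi^{-d}\varepsilon^{-d/v})$. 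This is a one-line citation rather than a construction, and it delivers the exact $\phi^{-d}$ power you identified as the delicate point.

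On the \emph{third} claim your Gram-matrix route has a gap. The assertion $\|\Sigma_n-\sigma^2 I_n\|_{\mathrm{op}}=o(1)$ fails at the boundary $\phi^{-d}\asymp n$: for a regular grid the nearest-neighbor spacing is $\asymp n^{-1/d}\asymp\phi$, so the nearest-neighbor Mat\'ern correlations are bounded away from zero and the off-diagonal part of $\Sigma_n$ does not vanish in operator norm. (Your chi-square conclusion might still be salvageable via a more careful eigenvalue argument, but not from the stated premise.) The paper's proof is much shorter and is exactly the alternative you mention in your last sentence: once $\phi^{-d}\gtrsim n$, the lower-bound radius $\delta_n=(\phi^{-d}/n)^{v/(2v+d)}$ from the second step is at least a fixed positive constant, and Theorem~11.23 of \cite{ghosal2017fundamentals} then yields $\Pi_n(\|w-w_0\|_n\le 1\mid Y)\to 0$ directly, with no separate empirical-norm analysis needed. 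So for the third part you should simply drop the Gram-matrix detour and keep the alternative you already sketched.
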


\subsection{Posterior Contraction Rates for the Rescaled CH Class}\label{section3.3}
In this subsection we show the rescaled CH and Mat\'ern  classes have similar posterior contraction behavior, which can be expected because the tails of the respective spectral densities only differ by a slowly varying function \citep{ma2022beyond}, and  the regularity of functions $\mathcal{F}_{\phi} \psi$ in RKHS is determined by the tails of the spectral measure \citep[][Chapter 11.4.4]{ghosal2017fundamentals}.

\begin{lemma} \label{CH small ball}
 Suppose $\frac{\Gamma(\alpha+v)}{\Gamma(\alpha) \beta^{2v}} >1 $, $\alpha>d/2+1$. There exists an $\varepsilon_0>0$, independent of $\alpha$ and $\beta$, such that 
the small ball exponent of the rescaled centered CH
 process $W^{\alpha,\beta}$  with  covariance function  (\ref{eq:CH}) satisfies,
$$
\varphi_0(\varepsilon) = -\log P(\| W^{\alpha,\beta}\|_{\infty} \le \varepsilon)= - \log P(\sup_{t \in \mathcal{T}} |W_t^{\alpha,\beta}| \le \varepsilon) 
 \lesssim \varepsilon^{-d/v} \left(\frac{\Gamma(\alpha+v)}{\Gamma(\alpha) \beta^{2v}}\right)^{\frac{d}{2v}},
$$
for $\varepsilon \in (0,\varepsilon_0)$.
\end{lemma}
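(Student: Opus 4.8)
The plan is to reduce the small ball estimate for the rescaled CH process to the one already established for the rescaled Mat\'ern process in Lemma~\ref{matern small ball}. Writing $A:=\dfrac{\Gamma(\alpha+v)}{\Gamma(\alpha)\beta^{2v}}$, so that the target reads $\varphi_0(\varepsilon)\lesssim\varepsilon^{-d/v}A^{d/(2v)}$, the idea is that the CH spectral density is, pointwise and up to constants depending only on $v$ and $d$, a Mat\'ern spectral density with an effective lengthscale $\phi_{\mathrm{eff}}\asymp A^{-1/(2v)}$; the small ball exponent then transfers by an Anderson-inequality comparison.

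First I would exhibit the CH covariance as a Gaussian (squared-exponential) scale mixture: inserting $U(\alpha,1-v,z)=\Gamma(\alpha)^{-1}\int_0^\infty e^{-zt}t^{\alpha-1}(1+t)^{-v-\alpha}dt$ into \eqref{eq:CH} and using the Fourier transform of $\tau\mapsto e^{-a\|\tau\|^2}$ gives
\[
m_{CH}^{\alpha,\beta}(\lambda)=c_{v,d,\sigma^2}\,\frac{\Gamma(\alpha+v)}{\Gamma(\alpha)}\Big(\tfrac{\beta^2}{v}\Big)^{d/2}\int_0^\infty t^{\alpha-1-d/2}(1+t)^{-v-\alpha}\exp\!\Big(-\tfrac{\beta^2\|\lambda\|^2}{4vt}\Big)dt,
\]
in agreement with the expression in Appendix~\ref{Ancillary}. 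Analysing this integral, I would show that there exist $0<c_1\le c_2<\infty$ depending only on $v,d$ with
\[
c_1\,A\,(A^{1/v}+\|\lambda\|^2)^{-(v+d/2)}\;\le\; m_{CH}^{\alpha,\beta}(\lambda)\;\le\; c_2\,A\,(A^{1/v}+\|\lambda\|^2)^{-(v+d/2)},\qquad \lambda\in\mathbb{R}^d,
\]
uniformly over $\alpha>d/2+1$ and $\beta>0$. The three regimes are: for $\|\lambda\|^2\gtrsim A^{1/v}$ the integral is governed by its large-$t$ part and reproduces the classical $\|\lambda\|^{-(2v+d)}$ tail with leading coefficient a moment of the mixing density equal to a universal multiple of $A$; for $\|\lambda\|^2\lesssim A^{1/v}$ one has $m_{CH}^{\alpha,\beta}(\lambda)\asymp m_{CH}^{\alpha,\beta}(0)$ and a Beta-integral evaluation gives $m_{CH}^{\alpha,\beta}(0)\asymp\Gamma(\alpha-d/2)\Gamma(\alpha)^{-1}\beta^{d}\asymp A^{-d/(2v)}$; the crossover $\|\lambda\|^2\asymp A^{1/v}$ is handled using that $m_{CH}^{\alpha,\beta}(\cdot)$ is radially decreasing. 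The hypothesis $\alpha>d/2$ is what makes $m_{CH}^{\alpha,\beta}(0)$ finite, and $\alpha>d/2+1$ makes the $\|\lambda\|^2$-derivative of the integral at the origin finite, which controls the transition region; uniformity in $\alpha$ at the level of constants follows because the ratios of Gamma functions involved are continuous and positive on $(d/2+1,\infty)$ and tend to a finite limit as $\alpha\to\infty$ by Stirling's formula. Since $m_{M}^{\phi}(\lambda)\asymp\phi^{-2v}(\phi^{-2}+\|\lambda\|^2)^{-(v+d/2)}$, the last display yields $m_{CH}^{\alpha,\beta}(\lambda)\le c_2'\,m_{M}^{\phi_{\mathrm{eff}}}(\lambda)$ for all $\lambda$, with $\phi_{\mathrm{eff}}=c_v A^{-1/(2v)}$ for an explicit $c_v=c_v(v)\le 1$ and $c_2'=c_2'(v,d)$; in particular $A>1$ forces $\phi_{\mathrm{eff}}<1$.

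Next I would run an Anderson-inequality comparison. Because $\widetilde m:=c_2'\,m_{M}^{\phi_{\mathrm{eff}}}-m_{CH}^{\alpha,\beta}\ge0$ is symmetric and integrable, it is the spectral density of a centered stationary Gaussian process $V$, which may be taken independent of $W^{\alpha,\beta}$ and which has a.s.\ continuous paths on $\mathcal{T}$ (its spectral density decays at least as fast as $m_{M}^{\phi_{\mathrm{eff}}}$). Since covariances add, $W^{\alpha,\beta}+V$ has the same law in $C(\mathcal{T})$ as $\sqrt{c_2'}\,W^{\phi_{\mathrm{eff}}}$, so Anderson's inequality applied to the symmetric convex set $\{f\in C(\mathcal{T}):\|f\|_\infty\le\varepsilon\}$ gives
\[
P\big(\|W^{\phi_{\mathrm{eff}}}\|_\infty\le\varepsilon/\sqrt{c_2'}\big)=P\big(\|W^{\alpha,\beta}+V\|_\infty\le\varepsilon\big)\le P\big(\|W^{\alpha,\beta}\|_\infty\le\varepsilon\big),
\]
hence $\varphi_0(\varepsilon)\le-\log P(\|W^{\phi_{\mathrm{eff}}}\|_\infty\le\varepsilon/\sqrt{c_2'})$. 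Applying Lemma~\ref{matern small ball} to the rescaled Mat\'ern process $W^{\phi_{\mathrm{eff}}}$ — legitimate since $\phi_{\mathrm{eff}}<1$, and with the $\varepsilon_0$ there independent of the scale, hence of $\alpha,\beta$ — and substituting $\phi_{\mathrm{eff}}^{-d}=(\phi_{\mathrm{eff}}^{-2v})^{d/(2v)}\asymp A^{d/(2v)}$ then gives $\varphi_0(\varepsilon)\lesssim(\varepsilon/\sqrt{c_2'})^{-d/v}\phi_{\mathrm{eff}}^{-d}\asymp\varepsilon^{-d/v}A^{d/(2v)}$ for all $\varepsilon$ in a fixed interval $(0,\varepsilon_0')$, which is the claim.

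The main obstacle is the uniform pointwise spectral density comparison in the first step: the tail behaviour is classical, but the behaviour of $m_{CH}^{\alpha,\beta}$ near the origin and across the transition band $\|\lambda\|\asymp A^{1/(2v)}$ must be pinned down with constants that do not depend on $\alpha$ or $\beta$, and this is precisely where the assumptions $\alpha>d/2+1$ and $A>1$ enter. Once that comparison is in hand, the Anderson reduction and the appeal to Lemma~\ref{matern small ball} are routine.
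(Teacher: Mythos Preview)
Your approach via Anderson's inequality is genuinely different from the paper's. The paper proceeds in parallel with its Mat\'ern argument (Lemma~\ref{matern small ball}): it uses the upper bound $m_{CH}^{\alpha,\beta}(\lambda)\lesssim A\,(1+\|\lambda\|^2)^{-(v+d/2)}$ from Lemma~\ref{inequality about ch spectral density} to embed the RKHS unit ball of $W^{\alpha,\beta}$ into a Sobolev ball of radius $\sqrt{A}$, invokes the Sobolev metric-entropy estimate, and then applies Theorem~1.2 of Li--Linde; the range of $\varepsilon$ for which this is valid is verified separately via a spectral-tail \emph{lower} bound (Lemma~\ref{CH spectral density tail behavior}) together with Lifshits's small-ball lower bound. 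Your route instead dominates the CH spectral density pointwise by a Mat\'ern one and transfers the small-ball estimate by Anderson, which is more modular: it recycles Lemma~\ref{matern small ball} wholesale and entirely sidesteps the separate Lifshits step needed to certify the validity range.

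That said, your stated ``main obstacle'' is self-imposed. The two-sided comparison $m_{CH}^{\alpha,\beta}\asymp A\,(A^{1/v}+\|\lambda\|^2)^{-(v+d/2)}$ with constants uniform in $\alpha$ is stronger than you need, and your sketch for the crossover band is not a proof: radial monotonicity of both densities does not by itself yield a pointwise inequality, and indeed the paper's own lower bound in that region (Lemma~\ref{CH spectral density tail behavior}) carries a factor $\alpha e^{-\alpha}$ that is \emph{not} uniform in $\alpha$, so the sharp lower half of your claim requires a genuine Gamma-concentration argument you have not supplied. For the Anderson reduction only an \emph{upper} bound on $m_{CH}^{\alpha,\beta}$ is required, and the much cruder one $m_{CH}^{\alpha,\beta}(\lambda)\le cA\,(1+\|\lambda\|^2)^{-(v+d/2)}\asymp cA\,m_M^{1}(\lambda)$ --- precisely the content of Lemma~\ref{inequality about ch spectral density} --- already suffices: then $W^{\alpha,\beta}+V\stackrel{d}{=}\sqrt{cA}\,W^{M,1}$ for an independent continuous $V$, Anderson gives $\varphi_0^{CH}(\varepsilon)\le\varphi_0^{M,1}(\varepsilon/\sqrt{cA})\lesssim(\varepsilon/\sqrt{cA})^{-d/v}=c'\,\varepsilon^{-d/v}A^{d/(2v)}$, and since $A>1$ the validity condition $\varepsilon/\sqrt{cA}<\varepsilon_0^{M}$ holds on a fixed interval independent of $(\alpha,\beta)$. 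This bypasses the transition-region analysis entirely and makes your argument shorter than the paper's.
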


\begin{lemma}\label{CH decentering}
Suppose $w_0 \in C^{\eta}(\mathcal{T}) \cap H^{\eta}(\mathcal{T})$. If the smoothness parameter $v$ for centered Gaussian process $W^{\alpha,\beta}$  with  covariance function  (\ref{eq:CH}) satisfies  $ v \ge \eta >0$,  then  for  $\alpha>d/2+1$, $\alpha \leq C \sqrt{\ln \ln{n}}$  for sufficient large $n$ and an arbitrarily large multiplicative constant $C$ that  does not depend on $n$, $\beta \lesssim \ln n$ and $\theta > \frac{2v+d}{2v+d-2 \eta},$ we have: 
$$
\inf_{h \in \mathbb{H}^{\alpha,\beta}:\|h-w_0\|_{\infty}\le C_{w_0} \beta^{\theta \eta}}\|h\|_{\mathbb{H}^{\beta}}^2 \le D_{w_0}(\beta^{2\theta})^{-v-d/2+\eta} \frac{\Gamma(\alpha) \beta^{2v}}{\Gamma(\alpha+v)},
$$
as $\beta \downarrow 0$, where $C_{w_0}$,$D_{w_0}$ only depend on $w_0$.
\end{lemma}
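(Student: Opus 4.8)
The plan is to follow the proof of Lemma~\ref{matern decentering} step by step, replacing the rescaled Mat\'ern spectral density $m_M^{\phi}$ by the rescaled CH density $m_{CH}^{\alpha,\beta}$; the extra hypotheses on $\alpha$ and $\beta$ are needed only to make the requisite two-sided bounds on $m_{CH}^{\alpha,\beta}$ uniform over the admissible range. First I would extend $w_0$ to a compactly supported $f_0\in C^{\eta}(\mathbb{R}^d)\cap H^{\eta}(\mathbb{R}^d)$ with norms bounded by a constant depending only on $w_0$ (an extension operator for the Lipschitz domain $\mathcal{T}$, followed by a smooth cut-off). Next, as in \citet{van2007bayesian}, fix a kernel $\psi$ with $\widehat{\psi}$ smooth, supported in the unit ball, identically $1$ near the origin, and $0\le\widehat{\psi}\le1$; with $\sigma=\beta^{\theta}$, $\psi_{\sigma}=\sigma^{-d}\psi(\cdot/\sigma)$ and $g_{\sigma}=f_0*\psi_{\sigma}$, the flatness of $\widehat{\psi}$ at the origin yields the Jackson-type estimate $\|g_{\sigma}-w_0\|_{\infty}\le C_{w_0}\sigma^{\eta}=C_{w_0}\beta^{\theta\eta}$, while $\widehat{g_{\sigma}}=\widehat{f_0}\,\widehat{\psi}(\sigma\,\cdot)$ vanishes outside $\{\|\lambda\|\le\beta^{-\theta}\}$. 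Since $\alpha>d/2+1$ makes $m_{CH}^{\alpha,\beta}$ continuous and strictly positive, Lemma~\ref{rescaled measure} lets me define $h\in\mathbb{H}^{\alpha,\beta}$ by $h=\widehat{g_{\sigma}}/m_{CH}^{\alpha,\beta}$ (up to the usual sign/real-part convention making $\mathcal{F}_{(\alpha,\beta)}h=g_{\sigma}$ on $\mathcal{T}$); this $h$ is supported in $\{\|\lambda\|\le\beta^{-\theta}\}$, lies in $L^{2}(\mu_{\alpha,\beta})$, satisfies $\|h-w_0\|_{\infty}\le C_{w_0}\beta^{\theta\eta}$, and
$$
\|h\|_{\mathbb{H}^{\alpha,\beta}}^{2}\ \le\ \int|h|^{2}\,d\mu_{\alpha,\beta}\ =\ \int_{\|\lambda\|\le\beta^{-\theta}}\frac{|\widehat{g_{\sigma}}(\lambda)|^{2}}{m_{CH}^{\alpha,\beta}(\lambda)}\,d\lambda\ \le\ \int_{\|\lambda\|\le\beta^{-\theta}}\frac{|\widehat{f_0}(\lambda)|^{2}}{m_{CH}^{\alpha,\beta}(\lambda)}\,d\lambda ,
$$
so the whole lemma reduces to bounding this last integral.

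The key input is the pointwise lower bound
$$
m_{CH}^{\alpha,\beta}(\lambda)\ =\ \beta^{d}\,m_{CH}^{\alpha,1}(\beta\lambda)\ \gtrsim\ \min\!\left(\beta^{d},\ \frac{\Gamma(\alpha+v)}{\Gamma(\alpha)}\,\beta^{-2v}\,\|\lambda\|^{-2v-d}\right),
$$
holding uniformly over $\alpha\in(d/2+1,\,C\sqrt{\ln\ln n}\,]$ and $\beta\lesssim\ln n$. It rests on the explicit CH spectral density recorded in Appendix~\ref{Ancillary}: near the origin $m_{CH}^{\alpha,1}$ is pinned between two positive constants (here $\alpha>d/2+1$ is what makes the defining integral converge), and for large argument it agrees with the Mat\'ern spectral density up to the factor $\Gamma(\alpha+v)/\Gamma(\alpha)$ and a slowly varying correction \citep{ma2022beyond}, with the transition occurring at $\|\beta\lambda\|\asymp(\Gamma(\alpha+v)/\Gamma(\alpha))^{1/(2v+d)}$. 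Granting it, $1/m_{CH}^{\alpha,\beta}(\lambda)\lesssim\beta^{-d}+\frac{\Gamma(\alpha)}{\Gamma(\alpha+v)}\beta^{2v}\|\lambda\|^{2v+d}$, and hence, with implied constants depending only on $w_0$,
$$
\|h\|_{\mathbb{H}^{\alpha,\beta}}^{2}\ \lesssim\ \beta^{-d}\ +\ \frac{\Gamma(\alpha)}{\Gamma(\alpha+v)}\,\beta^{2v}\!\!\int_{\|\lambda\|\le\beta^{-\theta}}\!\!|\widehat{f_0}(\lambda)|^{2}\,\|\lambda\|^{2v+d}\,d\lambda .
$$

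For the remaining integral I would write $\|\lambda\|^{2v+d}=\big(\|\lambda\|^{2v+d}(1+\|\lambda\|^{2})^{-\eta}\big)(1+\|\lambda\|^{2})^{\eta}$, pull out the supremum of the first factor over $\{\|\lambda\|\le\beta^{-\theta}\}$, and use $\|f_0\|_{2,2,\eta}<\infty$. Because $v+d/2>\eta$ (which holds since $v\ge\eta>0$ and $d\ge1$), that factor is, up to a constant, increasing for $\|\lambda\|\ge1$, so its supremum over $\{\|\lambda\|\le\beta^{-\theta}\}$ is $\asymp\beta^{-\theta(2v+d-2\eta)}$ as $\beta\downarrow0$, and the second term above is
$$
\asymp\ \frac{\Gamma(\alpha)}{\Gamma(\alpha+v)}\,\beta^{\,2v-\theta(2v+d-2\eta)}\ =\ \frac{\Gamma(\alpha)\beta^{2v}}{\Gamma(\alpha+v)}\,(\beta^{2\theta})^{-v-d/2+\eta},
$$
which is precisely the asserted bound. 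The hypothesis $\theta>\frac{2v+d}{2v+d-2\eta}$ makes the exponent $2v-\theta(2v+d-2\eta)$ strictly smaller than $-d$, so this term dominates the leftover $\beta^{-d}$ as $\beta\downarrow0$ (the factor $\Gamma(\alpha+v)/\Gamma(\alpha)\asymp\alpha^{v}$, of order at most $(\ln\ln n)^{v/2}$, does not upset this). Combined with the already established $\|h-w_0\|_{\infty}\le C_{w_0}\beta^{\theta\eta}$, this proves the lemma.

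The main obstacle is the uniform spectral estimate above. For each fixed $\alpha$ the behaviour of $m_{CH}^{\alpha,1}$ can be obtained from the scale-mixture-of-Mat\'ern representation of the CH covariance together with Abelian/Tauberian arguments relating the $\|h\|^{2v}$-term of the small-lag expansion of $C_{CH}$ — whose coefficient carries the factor $\Gamma(\alpha+v)/\Gamma(\alpha)$ — to the polynomial tail of $m_{CH}^{\alpha,1}$; the delicate point is that both the error constants in the asymptotic expansion of the confluent-hypergeometric expression for $m_{CH}^{\alpha,1}$ and the location of its low-/high-frequency crossover drift with $\alpha$, so they must be controlled \emph{uniformly} over the admissible range rather than merely for each fixed $\alpha$. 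The restrictions $\alpha\le C\sqrt{\ln\ln n}$ and $\beta\lesssim\ln n$ are exactly what keeps that crossover below the truncation level $\beta^{-\theta}$ and the slowly varying correction, evaluated over $\|\beta\lambda\|\le\beta^{1-\theta}$, bounded by absolute constants. These are the same ancillary facts about $m_{CH}^{\alpha,1}$ that underlie Lemma~\ref{CH small ball}, so they can be quoted here; once they are in place, the remainder of the argument parallels the Mat\'ern case of Lemma~\ref{matern decentering}.
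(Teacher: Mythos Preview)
Your overall strategy coincides with the paper's: the same band-limited convolution $h=\zeta_{\beta^{\theta}}*w_0$ from Lemma~\ref{matern decentering}, the same approximation estimate $\|h-w_0\|_{\infty}\le C_{w_0}\beta^{\theta\eta}$, and the same reduction of $\|h\|_{\mathbb{H}^{\alpha,\beta}}^{2}$ to $\|w_0\|_{2,2,\eta}^{2}$ times $\sup_{\|\lambda\|\le M\beta^{-\theta}}(1+\|\lambda\|^{2})^{-\eta}/m_{CH}^{\alpha,\beta}(\lambda)$. The only substantive divergence is in how that supremum is controlled.

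You assert a clean two-regime lower bound $m_{CH}^{\alpha,\beta}(\lambda)\gtrsim\min\bigl(\beta^{d},\ \tfrac{\Gamma(\alpha+v)}{\Gamma(\alpha)}\beta^{-2v}\|\lambda\|^{-2v-d}\bigr)$ with absolute implied constants, and defer its justification to ``the same ancillary facts \dots\ that underlie Lemma~\ref{CH small ball}.'' That attribution is off: Lemma~\ref{CH small ball} uses the \emph{upper} bound on $m_{CH}^{\alpha,\beta}$ from Lemma~\ref{inequality about ch spectral density}, not a lower bound. What the paper actually proves and uses here is Lemma~\ref{CH spectral density tail behavior}, a \emph{three}-regime lower bound on $(1+\|\lambda\|^{2})^{v+d/2}m_{CH}^{\alpha,\beta}(\lambda)$, with a middle zone $1<\|\lambda\|^{2}\beta^{2}<\alpha+v-1$ carrying an $e^{-\alpha}$ degradation; your two-regime bound is not established anywhere in the paper and does not obviously hold with $\alpha$-free constants. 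Accordingly, the paper's proof produces three candidate terms in the maximum---$\tfrac{\Gamma(\alpha)}{\Gamma(\alpha-d/2)\beta^{d}}$, a middle term $\tfrac{\Gamma(\alpha)e^{\alpha}\beta^{2v}}{\Gamma(\alpha-d/2)\alpha}\bigl(\tfrac{\alpha+v-1}{\beta^{2}}\bigr)^{v+d/2-\eta}$, and the target $(\beta^{2\theta})^{-v-d/2+\eta}\tfrac{\Gamma(\alpha)\beta^{2v}}{\Gamma(\alpha+v)}$---and verifies that the third dominates precisely under $\alpha\lesssim\sqrt{\ln\ln n}$ and $\beta^{-1}\gtrsim\ln n$, so that $e^{\alpha}$ and any power of $\alpha$ are beaten by positive powers of $\beta^{-1}$. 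Your argument is salvaged by invoking Lemma~\ref{CH spectral density tail behavior} in place of the two-regime claim and carrying the extra middle term through the comparison; everything else in your write-up stands.
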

Now, combine the two lemmas before and  solve the following inequalities:
\begin{displaymath}
    (\beta^{2\theta})^{-v-d/2+\eta} \frac{\Gamma(\alpha) \beta^{2v}}{\Gamma(\alpha+v)} \lesssim  n \varepsilon_n^2, \quad \beta^{\theta \eta} \lesssim \varepsilon_n,\quad  \varepsilon_n^{-d/v} \left(\frac{\Gamma(\alpha+v)}{\Gamma(\alpha) \beta^{2v}}\right)^{d/(2v)} \lesssim n \varepsilon_n^2.
\end{displaymath}
It suffices to solve:
\begin{displaymath}
   \varepsilon_n \ge \left( \frac{\beta^{2v}}{n}\right)^{\frac{\eta}{2v+d}}, \quad
   \varepsilon_n \ge \left( \frac{\beta^{-d}}{n}\right)^{\frac{v}{2v+d}}, 
\end{displaymath}
leading to  $\varepsilon_n \gtrsim n^{-\frac{\eta}{2\eta+d}}$, with equality attained when $\beta=n^{-\frac{v-\eta}{(2\eta+d)v}}$. Combining this rate and an application of \eqref{eq:van} in Section  \ref{section2.4}, we obtain the following theorem.
\begin{theorem}
Suppose we use a centered CH prior with  covariance function  (\ref{eq:CH}), $\alpha>d/2+1$ and $\alpha \leq C \sqrt{\ln \ln{n}}$  for sufficiently large $n$ and an arbitrarily large multiplicative constant $C$ that  does not depend on $n$, $w_0 \in C^{\eta}(\mathcal{T}) \cap H^{\eta}(\mathcal{T})$ and $v \ge \eta>0$.  If $\beta=n^{-\frac{v-\eta}{(2\eta+d)v}}$,  then  for nonparametric regression with fixed design and additive Gaussian errors,
$$
E_{w_0} \int \|w-w_0\|_n^2 d \Pi_n(w\mid Y_1,\ldots,Y_n) \lesssim (n^{-\frac{\eta}{2\eta+d}})^2,
$$
i.e. the posterior contracts at the (minimax optimal) rate $n^{-\frac{\eta}{2\eta+d}}$.
\label{rescale CH theorem}
\end{theorem}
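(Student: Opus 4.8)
The plan is to prove Theorem~\ref{rescale CH theorem} in exact parallel with Theorem~\ref{rescale matern theorem}: the two substantive estimates are already supplied by Lemmas~\ref{CH small ball} and~\ref{CH decentering}, so the remaining work is to check that the prescribed $\beta$ and the constraints on $\alpha$ meet the hypotheses of those lemmas, to solve the resulting system of inequalities for the contraction rate, and to feed that rate into \eqref{eq:van}. I would record at the outset that the CH process takes values in $C(\mathcal{T})$ (Section~\ref{section2.3}), so the general theory of \cite{van2008rates} applies with the supremum norm, and that \eqref{eq:van}, i.e.\ Theorem~1 of \cite{van2011information}, turns a bound $\varphi_{w_0}(\varepsilon_n)\lesssim n\varepsilon_n^2$ on the concentration function into the claimed $\|\cdot\|_n$ posterior risk bound, with $\Psi^{-1}_{w_0}(n)$ the resulting rate.

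First I would fix any constant $\theta>\frac{2v+d}{2v+d-2\eta}$ and set $\beta=n^{-\frac{v-\eta}{(2\eta+d)v}}$. Since $v\ge\eta$ the exponent is nonnegative, so $0<\beta\le 1$ for every $n$; hence $\beta\lesssim\ln n$, and $\beta\downarrow 0$ whenever $v>\eta$ (the boundary case $v=\eta$ gives a fixed-scale CH prior, handled as in \cite{van2011information}). Because $\alpha>d/2+1\ge 3/2$ lies to the right of the minimizer of the Gamma function, $\Gamma(\alpha+v)/\Gamma(\alpha)>1$, whence also $\frac{\Gamma(\alpha+v)}{\Gamma(\alpha)\beta^{2v}}>1$; together with $\alpha>d/2+1$ and $\alpha\le C\sqrt{\ln\ln n}$ this verifies every hypothesis of Lemmas~\ref{CH small ball} and~\ref{CH decentering}. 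Then Lemma~\ref{CH small ball} bounds the small-ball part of the concentration function by $\varepsilon_n^{-d/v}\big(\tfrac{\Gamma(\alpha+v)}{\Gamma(\alpha)\beta^{2v}}\big)^{d/(2v)}$, while Lemma~\ref{CH decentering}, applied at radius $C_{w_0}\beta^{\theta\eta}$ and used under the requirement $\varepsilon_n\ge C_{w_0}\beta^{\theta\eta}$ (so that the feasible set in the infimum in \eqref{two parts} only enlarges, and the lemma's bound therefore dominates it), bounds the decentering part by $D_{w_0}(\beta^{2\theta})^{-v-d/2+\eta}\tfrac{\Gamma(\alpha)\beta^{2v}}{\Gamma(\alpha+v)}$.

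Next I would impose that both of these quantities are $\lesssim n\varepsilon_n^2$, together with $\beta^{\theta\eta}\lesssim\varepsilon_n$, and substitute $\beta=n^{-\frac{v-\eta}{(2\eta+d)v}}$. The Gamma-function ratios are $\lesssim\alpha^{v}\lesssim(\ln\ln n)^{v/2}$ by standard bounds on ratios of Gamma values and the constraint $\alpha\le C\sqrt{\ln\ln n}$, hence subpolynomial in $n$ and absorbed into the constants; this leaves the two scalar inequalities $\varepsilon_n\ge(\beta^{2v}/n)^{\eta/(2v+d)}$ and $\varepsilon_n\ge(\beta^{-d}/n)^{v/(2v+d)}$ displayed in the text just before the statement. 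With the chosen $\beta$ both right-hand sides equal $n^{-\eta/(2\eta+d)}$, so $\varepsilon_n\asymp n^{-\eta/(2\eta+d)}$ satisfies the system, giving $\varphi_{w_0}(n^{-\eta/(2\eta+d)})\lesssim n\,(n^{-\eta/(2\eta+d)})^2$ and hence $\Psi^{-1}_{w_0}(n)\lesssim n^{-\eta/(2\eta+d)}$. Plugging this into \eqref{eq:van} yields $E_{w_0}\int\|w-w_0\|_n^2\,d\Pi_n(w\mid Y_1,\dots,Y_n)\lesssim (n^{-\eta/(2\eta+d)})^2$, which is the assertion; minimax optimality of this rate over $\eta$-regular functions follows as cited after Theorem~\ref{rescale matern theorem}.

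The hard part is not analytic but bookkeeping: since the genuinely difficult estimates are already packaged in Lemmas~\ref{CH small ball} and~\ref{CH decentering}, the delicate point is making their several interlocking constraints (in particular $\frac{\Gamma(\alpha+v)}{\Gamma(\alpha)\beta^{2v}}>1$, $\alpha>d/2+1$, $\alpha\le C\sqrt{\ln\ln n}$, $\beta\lesssim\ln n$, $\beta\downarrow 0$, and $\theta>\frac{2v+d}{2v+d-2\eta}$) all simultaneously compatible with the single choice $\beta=n^{-\frac{v-\eta}{(2\eta+d)v}}$, while verifying that the Gamma-function ratios entering the CH spectral density — the one feature absent from the Matérn computation — do not degrade the polynomial rate, which is precisely why the growth of $\alpha$ is capped at $\sqrt{\ln\ln n}$. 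Everything else is the same small-ball-versus-decentering balance already used for the rescaled squared exponential prior in \cite{van2007bayesian} and for the rescaled Matérn prior in Theorem~\ref{rescale matern theorem}.
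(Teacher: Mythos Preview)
Your proposal is correct and follows essentially the same route as the paper: combine Lemmas~\ref{CH small ball} and~\ref{CH decentering} to obtain the three inequalities displayed just before Theorem~\ref{rescale CH theorem}, reduce them to the pair $\varepsilon_n\ge(\beta^{2v}/n)^{\eta/(2v+d)}$ and $\varepsilon_n\ge(\beta^{-d}/n)^{v/(2v+d)}$, and then invoke~\eqref{eq:van}. Your write-up is in fact more careful than the paper's in verifying the lemma hypotheses (the condition $\Gamma(\alpha+v)/(\Gamma(\alpha)\beta^{2v})>1$, $\beta\lesssim\ln n$, etc.) and in making explicit why the Gamma-function ratio $\Gamma(\alpha+v)/\Gamma(\alpha)\lesssim\alpha^v\lesssim(\ln\ln n)^{v/2}$ is subpolynomial and therefore absorbed into constants; the paper leaves both of these implicit.
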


In this theorem, the parameter $\alpha$ can   diverge to infinity, which provides more flexibility  for the rescaled CH prior compared to the rescaled Mat\'ern prior. Although $\alpha$ is not a natural rescaling parameter as $\beta$ or $\phi$, its choice still affects the rate. Specifically, in this theorem  we   show that  when  $\alpha$ goes to infinity slowly, the  optimal minimax rate is obtained. The case where $\alpha$ goes to infinity quickly remains to be explored. 

In  Theorems \ref{rescale matern theorem} and \ref{rescale CH theorem}, when   $v=\eta$,  $\phi$ and $\beta$ are  fixed, i.e., the priors are non-rescaled,  we obtain the optimal minimax rate,  which can be expected since in this case the smoothness parameter of covariance function matches the the regularity $\eta$ of the ground truth. Mat\'ern process prior  with $\phi$ fixed is studied in  Theorem 5 of  \cite{van2011information}.

   The following theorem states that when $\beta$ goes to infinity too quickly, as in the rescaled Mat\'ern prior case, the posterior contraction rate is suboptimal. 
\begin{theorem}
Suppose we use a centered CH prior with  covariance function  (\ref{eq:CH}),  $\alpha>d/2+1$, $\alpha \leq C \sqrt{\ln \ln{n}}$  for sufficiently large $n$ and an arbitrarily large multiplicative constant $C$ that  does not depend on $n$, and $0<\beta<1$.   Then, for nonparametric regression with fixed design and additive Gaussian errors, we have,
$$
  \varphi_0(\varepsilon )\gtrsim \beta^{-d} \varepsilon^{-d/v}.
$$
When $v>\eta$ and  $\beta=o(n^{-\frac{v-\eta}{(2\eta+d)v}})$, the posterior contraction rate is suboptimal. Furthermore, when $\beta^{-d} \gtrsim n$, 
$$
 \Pi_n(w:\|w-w_0\|_n \leq 1 \mid Y_1,\ldots,Y_n)  \to 0,
$$
in probability $P_0^n$, i.e. the posterior does not contract.
\label{lower bound of rescaled CH}
\end{theorem}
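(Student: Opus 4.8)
The plan is to parallel the proof of Theorem~\ref{lower bound of rescaled matern}, exploiting that the CH and Mat\'ern spectral densities have polynomial tails differing only by a slowly varying factor \citep{ma2022beyond} and the same dependence on the lengthscale, so that the RKHS $\mathbb{H}^{\alpha,\beta}$ is, up to the constant $\Gamma(\alpha)\beta^{2v}/\Gamma(\alpha+v)$ that already appears in Lemmas~\ref{CH small ball}--\ref{CH decentering} and up to slowly varying corrections, a $\beta$-dilation of the Sobolev space $H^{v+d/2}(\mathcal{T})$, exactly as the rescaled Mat\'ern RKHS is a $\phi$-dilation of the same space. There are three parts: (i) the small-ball lower bound $\varphi_0(\varepsilon)\gtrsim\beta^{-d}\varepsilon^{-d/v}$; (ii) suboptimality when $\beta=o(n^{-(v-\eta)/((2\eta+d)v)})$; (iii) non-contraction when $\beta^{-d}\gtrsim n$.

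For (i), I would invoke the two-sided correspondence between the small-ball exponent and the $\|\cdot\|_\infty$-metric entropy of the RKHS unit ball $\mathbb{H}_1^{\alpha,\beta}$ \citep{li1999approximation}: Lemma~\ref{CH small ball} already supplies the matching \emph{upper} bound on that entropy, and for the lower bound it suffices, as in the Mat\'ern case, to note that $m^{\alpha,\beta}_{CH}(\lambda)\gtrsim \bigl(\Gamma(\alpha+v)/(\Gamma(\alpha)\beta^{2v})\bigr)\|\lambda\|^{-2v-d}$ on the range $\|\lambda\|\gtrsim 1/\beta$, so that by Lemma~\ref{rescaled measure} the ball $\mathbb{H}_1^{\alpha,\beta}$ contains a $\beta$-rescaled ball of $H^{v+d/2}(\mathcal{T})$ with radius of order $\Gamma(\alpha)\beta^{2v}/\Gamma(\alpha+v)$; the classical entropy estimate for Sobolev balls then gives $\log N(\delta,\mathbb{H}_1^{\alpha,\beta},\|\cdot\|_\infty)\gtrsim\beta^{-d}\delta^{-2d/(2v+d)}$, the $\Gamma$-ratio being absorbed into the constants since $\alpha\le C\sqrt{\ln\ln n}$, and the lower-bound half of the small-ball/entropy relation yields $\varphi_0(\varepsilon)\gtrsim\beta^{-d}\varepsilon^{-d/v}$ for $\varepsilon\in(0,\varepsilon_0)$. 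Tracking the power of $\beta$ through the dilation is the only delicate point, and it is identical to the Mat\'ern computation.

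For (ii), part (i) forces any $\varepsilon_n$ solving $\varphi_0(\varepsilon_n)\lesssim n\varepsilon_n^2$ to satisfy $\varepsilon_n\gtrsim(\beta^{-d}/n)^{v/(2v+d)}$; a short computation shows this equals $n^{-\eta/(2\eta+d)}$ exactly at $\beta=n^{-(v-\eta)/((2\eta+d)v)}$, hence is strictly slower than the minimax rate once $\beta=o(n^{-(v-\eta)/((2\eta+d)v)})$. Converting this lower bound on $\varphi_{w_0}\ge\varphi_0$ into a genuine lower bound on the posterior contraction rate is done exactly as in Theorem~\ref{lower bound of rescaled matern} via \citet{castillo2008lower}. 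For (iii), when $\beta^{-d}\gtrsim n$ part (i) gives $\varphi_{w_0}(\varepsilon)\ge\varphi_0(\varepsilon)\gtrsim\beta^{-d}\varepsilon^{-d/v}\gtrsim n$ uniformly in $\varepsilon\in(0,\varepsilon_0]$, with implied constant as large as desired by enlarging the constant in $\beta^{-d}\gtrsim n$; since $\varphi_{w_0}(\varepsilon)\asymp-\log\Pi(\|w-w_0\|_\infty<\varepsilon)$ \citep{van2008reproducing}, every sup-norm ball of radius $\le\varepsilon_0$ about $w_0$ has prior mass $\le e^{-cn}$. Writing $\Pi_n(\|w-w_0\|_n\le1\mid Y)$ as the ratio of $\int_{\|w-w_0\|_n\le1}e^{\ell_n(w)-\ell_n(w_0)}\,d\Pi(w)$ over the full evidence $\int e^{\ell_n(w)-\ell_n(w_0)}\,d\Pi(w)$, where $\ell_n$ is the log-likelihood of the regression model of Section~\ref{section2.4}, I would bound the evidence below by $e^{-Cn}$ with $P_0^n$-probability tending to one (restricting to a fixed-radius sup-norm ball about $w_0$, on which the prior mass is bounded below by an absolute constant and the log-likelihood ratio by $-Cn$, i.e. the standard evidence lower bound for Gaussian regression), and bound the numerator above by intersecting with a sieve $\mathcal{W}_n$ of prior complement mass $e^{-Dn}$ for $D$ as large as needed, covering $\{\|w-w_0\|_n\le1\}\cap\mathcal{W}_n$ by small sup-norm balls, controlling the likelihood ratio on the whole set by $e^{C'n}$, and summing the per-ball prior masses $\le e^{-cn}$. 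The main obstacle is this last step: the covering number and the small-ball mass each scale like an exponential of $\beta^{-d}$ (hence of $n$), so they must be played off against each other at carefully chosen radii, with $c$ taken large via the constant in $\beta^{-d}\gtrsim n$; this delicate balancing is precisely the argument already carried out for the rescaled Mat\'ern prior in Theorem~\ref{lower bound of rescaled matern}, and the CH case follows by substituting the estimates of Lemmas~\ref{CH small ball} and \ref{CH decentering}.
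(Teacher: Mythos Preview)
Your overall strategy parallels the paper's in spirit but diverges in execution at two places, and in one of them you have created an obstacle that the paper's route simply sidesteps.

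\textbf{Part (i).} The paper does \emph{not} go through the Li--Linde entropy correspondence for the lower bound. Instead it invokes a direct small-ball result of \citet{lifshits1987small}: a polynomial lower bound on the spectral density tail, here $m^{\alpha,\beta}_{CH}(\lambda)\gtrsim \beta^{-2v}\|\lambda\|^{-(2v+d)}$ for $\|\lambda\|\ge(\alpha+v-1)/\beta$ (Lemma~\ref{CH spectral density tail behavior}), yields immediately $P(\sup_{t\in\mathcal{T}}|W_t^{\alpha,\beta}|\le\varepsilon)\le\exp(-C\beta^{-d}\varepsilon^{-d/v})$. Your entropy route can in principle be made to work, but your containment statement is garbled: a ball of radius $\Gamma(\alpha)\beta^{2v}/\Gamma(\alpha+v)$ is \emph{small} when $\beta<1$, and a small Sobolev ball inside $\mathbb{H}_1^{\alpha,\beta}$ does not by itself give the entropy lower bound with the required $\beta^{-d}$ factor. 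To salvage the entropy argument one has to rescale the domain (work with $\tilde W=W^{\alpha,1}$ on $\mathcal{T}/\beta$) and use that Sobolev entropy on a domain of diameter $1/\beta$ picks up a $\beta^{-d}$ volume factor, or else restrict to functions with Fourier support in $\{\|\lambda\|\gtrsim 1/\beta\}$ and argue separately that this high-frequency subset already carries the full entropy. Either way this is more work than the one-line appeal to Lifshits.

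\textbf{Parts (ii)--(iii).} Here the paper does not build anything from scratch: it applies Theorem~11.23 of \citet{ghosal2017fundamentals} (a Castillo-type lower bound), which takes as input a rate $\varepsilon_n$ satisfying the upper-bound inequality $\varphi_{w_0}(\varepsilon_n)\le n\varepsilon_n^2$ and a $\delta_n$ with $\varphi_{w_0}(\delta_n)\ge C_0 n\varepsilon_n^2$, and outputs that the posterior cannot contract faster than $\delta_n$. With $\varepsilon_n=(\beta^{-d}/n)^{v/(2v+d)}$ (valid by the discussion before Theorem~\ref{rescale CH theorem}) and $\delta_n$ of the same order, part~(i) gives the required lower bound on $\varphi_0\le\varphi_{w_0}$; when $\beta^{-d}\gtrsim n$ one has $\delta_n\ge 1$, which is exactly the non-contraction statement. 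The evidence-ratio/sieve construction you propose for (iii), together with the covering-number-versus-small-ball-mass balancing act you flag as the ``main obstacle,'' is entirely unnecessary: that obstacle is an artifact of trying to redo by hand what Theorem~11.23 already packages. Your approach for (ii) via \citet{castillo2008lower} is essentially the same theorem, so that part is fine.
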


\subsection{Adaptive Posterior Contraction Rates}\label{section3.4}
In the previous subsections, we obtained the optimal minimax rate by  choosing the rescaling parameter depending on the regularity of the function of interest, which is always unknown in practice.  \cite{van2009adaptive} consider a fully Bayesian alternative by putting a hyperprior on the rescaling parameter for the squared exponential process prior.  In this subsection, we follow the method of \cite{van2009adaptive}, and  obtain the optimal minimax rate
in a fully Bayesian setting simultaneously over a range of true regularity values,  for both Mat\'ern and CH processes.

Consider the Mat\'ern  case, where we put a prior on $\phi$, and   let $A=1/\phi$.  We  denote this hierarchical process by $W_M^A$. For the CH case, we put a prior on $\beta$. In this case, let $A=1/\beta$ and denote this  hierarchical process by $W_{CH}^A$. For simplicity, we abbreviate $W_M^A$ or $W^A_{CH}$ to $W^A$ by dropping the subscripts when we handle  either Mat\'ern  or CH  hierarchical processes. Now we assume that the distribution of $A$ possesses a Lebesgue density $g_A(\cdot)$ satisfying the condition: 
\begin{equation} \label{gamma prior tail }
C_1 a^p \exp \left(-D_1 a^{d} \right) \leq g_A(a) \leq C_2 a^p \exp \left(-D_2 a^{d}  \right) ,
\end{equation}
for positive constants $C_1, D_1, C_2, D_2$, non-negative constants $p$ and all  sufficiently large $a>0$. A gamma distribution on $A^{d}$ satisfies this condition.

Adaptive posterior rate can be obtained by verifying the following three conditions \citep{van2009adaptive} for Borel measurable subsets $B_n$ of $C(\mathcal{T})$ such that, for  sufficiently large $n$,
\begin{equation} \label{adaptive condition1}
P\left(\left\|W^A-w_0\right\|_{\infty} \leq \varepsilon_n\right)  \geq e^{-n \varepsilon_n^2},
\end{equation}
\begin{equation}\label{adaptive condition2}
P\left(W^A \notin B_n\right)  \leq e^{-4 n \varepsilon_n^2}, 
\end{equation}
\begin{equation}\label{adaptive condition3}
\log N\left({\varepsilon}_n, B_n,\|\cdot\|_{\infty}\right)  \leq n {\varepsilon}_n^2,
\end{equation}
where $\varepsilon_n$   is to be determined.  We prove the following result.
\begin{theorem}
    \label{adaptive matern}
Let $W$ be a centered  Gaussian process with  Mat\'ern  covariance function (\ref{eq:Matern}).  Put  a prior satisfying (\ref{gamma prior tail }) on random variable  $A=1/\phi$  and denote this hierarchical process by $W^A$. If $w_0 \in C^{\eta}(\mathcal{T} )\cap H^{\eta}(\mathcal{T})$ for some $\eta > 0$ and  $v \ge \eta$, then there exist Borel measurable subsets $B_n$ of  $C(\mathcal{T})$ such that conditions (\ref{adaptive condition1}), (\ref{adaptive condition2}) and (\ref{adaptive condition3}) hold, for  sufficiently large $n$, and $\varepsilon_n \asymp n^{-\eta /(2 \eta+d)}$.
\end{theorem}

By Theorem~\ref{adaptive matern} and an application of the proof of Theorem 3.3 of \cite{van2008rates}, one obtains  the following (optimal minimax) posterior contraction rate result for fixed design nonparametric  regression with hierarchical Mat\'ern  process priors.

\begin{theorem}
    Under the conditions of Theorem \ref{adaptive matern}, for
    fixed design nonparametric  regression with additive Gaussian error,      
$$
E_{w_0} \Pi_n(w:\|w-w_0\|_n >M n^{-\frac{\eta}{2\eta+d}} \mid Y_1,\ldots,Y_n)  \to 0,
$$
for any sufficiently large constant $M$, i.e.
    the posterior contracts at the (optimal minimax) rate $n^{-\frac{\eta}{2\eta+d}}$.

\end{theorem}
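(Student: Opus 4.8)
The plan is to deduce the contraction rate from the general posterior contraction theorem for (here, hierarchical Gaussian) priors, using Theorem~\ref{adaptive matern} to supply the three hypotheses, and specializing the abstract statistical distances to the fixed-design regression model of Section~\ref{section2.4}. Concretely, for the model $Y_j = w(x_j) + \varepsilon_j$ with $\varepsilon_j \sim N(0,\sigma_0^2)$ independent, the Kullback--Leibler divergence and variation between the laws indexed by $w$ and $w_0$ satisfy $K(P_{w_0}^n, P_w^n) = \tfrac{1}{2\sigma_0^2}\sum_{j=1}^n (w(x_j)-w_0(x_j))^2 = \tfrac{n}{2\sigma_0^2}\|w-w_0\|_n^2$ and $V(P_{w_0}^n,P_w^n) \lesssim n\|w-w_0\|_n^2$, while $\|w-w_0\|_n \le \|w-w_0\|_\infty$ on $C(\mathcal{T})$. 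Hence the sup-norm ball $\{w : \|w-w_0\|_\infty \le \varepsilon_n\}$ is contained in the Kullback--Leibler neighborhood $\{w : K(P_{w_0}^n,P_w^n) \le C n\varepsilon_n^2,\ V(P_{w_0}^n,P_w^n) \le C n\varepsilon_n^2\}$, so condition~\eqref{adaptive condition1} from Theorem~\ref{adaptive matern} delivers exactly the prior-mass (``KL'') condition required by the general theorem, with $\varepsilon_n = n^{-\eta/(2\eta+d)}$.

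Next I would handle the sieves. Take $B_n$ as produced by Theorem~\ref{adaptive matern}. Condition~\eqref{adaptive condition2}, $P(W^A \notin B_n) \le e^{-4 n\varepsilon_n^2}$, is the remaining-mass condition. For the testing/entropy condition, since $\|\cdot\|_n \le \|\cdot\|_\infty$, the $\varepsilon_n$-covering number of $B_n$ in the empirical seminorm is bounded by its $\varepsilon_n$-covering number in the sup-norm, so \eqref{adaptive condition3} yields $\log N(\varepsilon_n, B_n, \|\cdot\|_n) \le n\varepsilon_n^2$. By the standard Le Cam--Birg\'e construction (the Gaussian-shift structure makes the Hellinger distance comparable, up to constants on bounded sets, to $\sqrt{n}\,\|\cdot\|_n$, so balls in $\|\cdot\|_n$ are testable), this entropy bound furnishes tests with type-I error tending to zero and type-II error over $\{w\in B_n : \|w-w_0\|_n > M\varepsilon_n\}$ bounded by $e^{-c M^2 n\varepsilon_n^2}$ for a suitable $c>0$. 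This is precisely the input of Theorem~3.3 of \cite{van2008rates} (equivalently, the general contraction theorem in \cite{ghosal2017fundamentals}), specialized to the distance $d_n(w,w_0) = \|w-w_0\|_n$.

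Assembling these pieces, the general theorem gives $\Pi_n\big(w : \|w-w_0\|_n > M_n \varepsilon_n \mid Y_1,\ldots,Y_n\big) \to 0$ in $P_{w_0}^n$-probability for any $M_n \to \infty$, and, via the standard uniform-integrability argument underlying \eqref{eq:van} (using the exponential lower bound on the posterior denominator together with the exponential test errors), the integrated form $E_{w_0}\int \|w-w_0\|_n^2 \, d\Pi_n(w\mid Y_1,\ldots,Y_n) \lesssim \varepsilon_n^2 = (n^{-\eta/(2\eta+d)})^2$. Since $n^{-\eta/(2\eta+d)}$ is the minimax rate over $\eta$-regular functions of $d$ variables \citep{tsybakov2009introduction,stone1980optimal}, the rate is optimal, and because a single prior on $A$ validates \eqref{adaptive condition1}--\eqref{adaptive condition3} for every $\eta$ in the admissible range, the procedure is adaptive.

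The step I expect to be the main obstacle is bookkeeping rather than conceptual: checking that the passage from the sup-norm conditions (in which Theorem~\ref{adaptive matern} is phrased) to the empirical-seminorm conditions required by the regression contraction theorem goes through uniformly --- that the prior-mass and entropy statements survive the inequality $\|\cdot\|_n \le \|\cdot\|_\infty$ (the easy direction), that $\|\cdot\|_n$ being only a seminorm does not break the testing argument (it does not, since the likelihood depends on $w$ only through $(w(x_1),\ldots,w(x_n))$, on which $\|\cdot\|_n$ is a genuine norm), and that the constants --- the ``$4$'' in \eqref{adaptive condition2} and the implicit constant in the KL neighborhood --- meet the numerical requirements of Theorem~3.3 of \cite{van2008rates}. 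Once this alignment is verified, the conclusion follows directly from the cited general theorem.
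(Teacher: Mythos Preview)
Your proposal is correct and follows essentially the same approach as the paper: the paper simply states that the result follows ``by Theorem~\ref{adaptive matern} and an application of the proof of Theorem~3.3 in \cite{van2008rates},'' and you have spelled out precisely that application---verifying the KL, remaining-mass, and entropy conditions for the fixed-design regression model via the sup-norm bounds from Theorem~\ref{adaptive matern} and the inequality $\|\cdot\|_n \le \|\cdot\|_\infty$. Your additional bookkeeping (the Gaussian KL computation, the seminorm remark, the testing construction) merely makes explicit what the paper leaves implicit in its one-line citation.
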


 Theorem 3.1 of \cite{van2009adaptive} can be seen to be closely connected to our Theorem \ref{adaptive matern}, since the exponential  process can be seen  as  a limiting case of Mat\'ern process when $v \to \infty$.

For  fixed design nonparametric  regression with hierarchical CH  process priors, similar to Mat\'ern case, we also have the following two theorems regarding the (optimal minimax) posterior contraction rate. We provide a proof for Theorem~\ref{adaptive CH}, while Theorem~\ref{th:vanCH} follows by an application of Theorem 3.3 of \citet{van2008rates} to the result of Theorem~\ref{adaptive CH}.
\begin{theorem}
    \label{adaptive CH}
Let $W$ be a centered  Gaussian process with  CH covariance function (\ref{eq:CH}).   Put a prior satisfying (\ref{gamma prior tail }) on random variable $A=1/\beta$ and denote this hierarchical process by $W^A$. If $\alpha>d/2+1$,  $w_0 \in C^{\eta}(\mathcal{T} )\cap H^{\eta}(\mathcal{T})$  for some $\eta > 0$ and  $v \ge \eta$,  then there exist Borel measurable subsets $B_n$ of  $C(\mathcal{T})$ such that conditions (\ref{adaptive condition1}), (\ref{adaptive condition2}) and (\ref{adaptive condition3}) hold, for  sufficiently large $n$, and $\varepsilon_n \asymp n^{-\eta /(2 \eta+d)}$.
\end{theorem}

\begin{theorem}\label{th:vanCH}
    Under the conditions of Theorem \ref{adaptive CH}, for
    fixed design nonparametric  regression with additive Gaussian error, 
$$
E_{w_0} \Pi_n(w:\|w-w_0\|_n >M n^{-\frac{\eta}{2\eta+d}} \mid Y_1,\ldots,Y_n)  \to 0,
$$
for any sufficiently large constant $M$, i.e.
    the posterior contracts at the (optimal minimax) rate $n^{-\frac{\eta}{2\eta+d}}$.
\end{theorem}

\section{Posterior Contraction Rates for Anisotropic  Covariance Functions}
\label{section4}
Under directional spatial effects, isotropy is no longer a realistic assumption for modeling. A similar argument can be made for other applications of multivariate random fields that warrant anisotropic modeling. Suppose the isotropic correlation function is $C(d(\bm{x},\bm{y}))$, where $d$ is Euclidean distance. Anisotropy can be introduced by applying $C(\cdot)$  to a non-Euclidean distance measure, obtained as Euclidean distance in a linearly transformed coordinate system. For the simple geometric anisotropy case \citep{haskard2007anisotropic,allard2016anisotropy},  consider a Mahalanobis-type distance: 
$$
\tilde{d}(\bm{x},\bm{y})=\sqrt{{(\bm{x}-\bm{y})}^TA{(\bm{x}-\bm{y})}},
$$
 where $A$ is a positive definite  matrix. When $A=\mathrm{diag}(a_i)$ is a diagonal matrix, covariance kernel $K(x,y)=C(\tilde{d}(\bm{x},\bm{y}))$ is termed the automatic relevance determination (ARD) kernel, and is widely used in the machine learning literature \citep{williams2006gaussian}.

Assume   $\bm{h}=\{h_1, \ldots,h_d\}$, where $h_i$s are scalars for $i=1,\ldots,d$. Let $\bm{B}$ be a positive definite $d \times d$ matrix with $ij$th entry  ${B}_{ij}$. We  define the anisotropic Mat\'ern covariance function  to be: 
$$
M(\bm{h};v,\bm{B},\sigma^2)=\sigma^2 \frac{2^{1-v}}{\Gamma(v)}\left(\sqrt{2v\left[\sum_{i=1}^{d}{{B}_{ij}}{h_ih_j}\right]}\right)^v {K}_v\left( \sqrt{2v\left[\sum_{i=1}^{d}{{B}_{ij}}{h_ih_j}\right]} \right),
$$
and the anisotropic CH covariance function to be:
$$
C(\bm{h};v,\alpha,\bm{B},\sigma^2)=\frac{\sigma^2 \Gamma(v+\alpha)}{\Gamma(v)}U\left(\alpha,1-v,v\left[\sum_{i=1}^{d}{{B}_{ij}}{h_ih_j}\right]\right).
$$
Then, the spectral density of the  anisotropic Mat\'ern covariance is: 
$$
m_M^{\bm{B}}(\bm{\lambda}) = \frac{\sigma^2(2 v)^{v}}{\pi^{d / 2}|\bm{B}|^{1/2}\left(2v+\bm{\lambda}^T\bm{B}^{-1}\bm{\lambda}  \right)^{v+d/2}} ;
$$
and the spectral density of the anisotropic CH covariance is: 
\begin{displaymath}
m^{\alpha,\bm{B}}_{CH}(\bm{\lambda}) =  \frac{\sigma^2 2^{v-\alpha} v^v }{\pi^{d/2}\Gamma(\alpha)|\bm{B}|^{1/2}}\int_0^{\infty} (2v\phi^{-2}+\bm{\lambda}^T\bm{B}^{-1}\bm{\lambda})^{-v-\frac{d}{2}}  \phi^{-2(v+\alpha+1)} \exp{(-\frac{1}{2\phi^2})d\phi^2}.
\end{displaymath}
The spectral densities of the anisotropic Mat\'ern  and CH covariances can be obtained by applying Fourier transform to the covariance functions and  using variable transformation $\bm{h}=\bm{B}^{-1/2}\bm{t}$. Then we can deal with this Fourier transform like the isotropic case.  Here we call $\bm{B}$ an anisotropy matrix. Suppose $\lambda_{\min}, \lambda_{\max}$ are the smallest and largest  eigenvalues of the anisotropy matrix $\bm{B}$. If we impose some restriction on the eigenvalues of $\bm{B}$, then we have the following posterior contraction rate results for stationary Gaussian process priors with anisotropic Mat\'ern and CH covariance functions.
\begin{theorem}\label{rescale anisotropic theorem}
Assume $w_0 \in C^{\eta}(\mathcal{T}) \cap H^{\eta}(\mathcal{T})$, $v \ge \eta>0$ and $\lambda_{\min}/\lambda_{\max} \ge C >0$, where $C$ is a constant.  We use a centered  stationary Gaussian process prior with  anisotropic Mat\'ern covariance function $M(h;v,\bm{B},\sigma^2)$ or with  anisotropic CH covariance function  $C(\bm{h};v,\alpha,\bm{B},\sigma^2)$,  $\alpha>d/2+1$,$\alpha \leq C_0 \sqrt{\ln \ln{n}}$  for sufficiently large $n$ and an arbitrarily large multiplicative constant $C_0$ that  does not depend on $n$ and $\lambda_{\max}=n^{\frac{v-\eta}{(2\eta+d)v}}$. Then,  for nonparametric regression with fixed design and additive Gaussian errors, 
$$
E_{w_0} \int \|w-w_0\|_n^2 d \Pi_n(w\mid Y_1,\ldots,Y_n) \lesssim (n^{-\frac{\eta}{2\eta+d}})^2,
$$
i.e. the posterior contracts at the (optimal minimax) rate $n^{-\frac{\eta}{2\eta+d}}$. 

\end{theorem}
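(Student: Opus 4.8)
The plan is to reduce the anisotropic problem to the isotropic rescaled problem already settled in Theorems~\ref{rescale matern theorem} and~\ref{rescale CH theorem}, using the ratio condition $\lambda_{\min}/\lambda_{\max}\ge C$ to make the reduction quantitative. Writing $\bm{B}^{1/2}$ for the positive-definite square root of $\bm{B}$, the substitution $\bm{t}\mapsto\bm{B}^{1/2}\bm{t}$ turns a centered Gaussian process with anisotropic Mat\'ern covariance $M(\cdot;v,\bm{B},\sigma^{2})$ on $\mathcal{T}$ into a centered process with the \emph{isotropic} Mat\'ern covariance $M(\cdot;v,1,\sigma^{2})$ on the distorted domain $\bm{B}^{1/2}\mathcal{T}$, and likewise carries the anisotropic CH process to an isotropic CH process with $\beta=1$ on $\bm{B}^{1/2}\mathcal{T}$; at the level of spectral densities this is exactly the observation, recorded after the displays for $m_M^{\bm{B}}$ and $m^{\alpha,\bm{B}}_{CH}$, that $\bm{B}^{-1}$ is the anisotropic analogue of $\phi^{2}I$, so that the RKHS of the transformed process is, after the dilation, a Sobolev space $H^{v+d/2}$ on a fixed domain and $|\bm{B}|^{1/2}\asymp\lambda_{\max}^{d/2}$ plays the part of $\phi^{-d}$ (resp.\ $\beta^{-d}$). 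The condition $\lambda_{\min}/\lambda_{\max}\ge C$ enters precisely here: it traps $\bm{B}^{1/2}\mathcal{T}$, up to fixed constants, between two Euclidean balls whose radii agree up to a constant factor, so that $\bm{B}^{1/2}\mathcal{T}$ behaves like a $\sqrt{\lambda_{\max}}$-dilation of a fixed convex bounded Lipschitz domain and the whole analysis depends on $\bm{B}$ only through $\lambda_{\max}$.

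With this dictionary the argument follows the isotropic template in two steps. First, I would rerun the metric-entropy computation underlying Lemmas~\ref{matern small ball} and~\ref{CH small ball} for the transformed process to obtain a small-ball bound of the form $\varphi_{0}(\varepsilon)=-\log P(\|W\|_{\infty}\le\varepsilon)\lesssim\varepsilon^{-d/v}\,|\bm{B}|^{1/2}$ for $\varepsilon$ in a fixed neighbourhood of $0$, with the dependence on $\alpha$ in the CH case absorbed because $\alpha\lesssim\sqrt{\ln\ln n}$, exactly as in Lemma~\ref{CH small ball}. Second, I would reprove the analogues of Lemmas~\ref{matern decentering} and~\ref{CH decentering}: a band-limited approximation of $w_{0}$, pulled back through $\bm{B}^{1/2}$, produces an $h$ in the anisotropic RKHS whose sup-distance $\|h-w_{0}\|_{\infty}$ and RKHS norm $\|h\|_{\mathbb{H}}^{2}$ are controlled by powers of $\lambda_{\max}$ and a free parameter $\theta>\frac{2v+d}{2v+d-2\eta}$, where $\lambda_{\min}\asymp\lambda_{\max}$ is used to bound the directional distortion of the H\"older/Sobolev norm of $w_{0}$. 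Feeding both bounds into the pair of inequalities~\eqref{two parts}, optimizing over $\theta$ as in the isotropic proofs, and using the prescribed growth $\lambda_{\max}=n^{(v-\eta)/((2\eta+d)v)}$ to balance the small-ball and decentering contributions yields $\varepsilon_{n}=n^{-\eta/(2\eta+d)}$; the stated posterior contraction then follows from~\eqref{eq:van}.

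The main obstacle I expect is the second step: transferring the RKHS-approximation estimate to the anisotropically distorted geometry with constants independent of $n$. The approximation is carried out on $\bm{B}^{1/2}\mathcal{T}$, whose diameter grows with $n$, and one must certify that the $\eta$-regularity of $w_{0}$ survives the pullback with a norm that degrades only by a controlled power of $\lambda_{\max}$; this is exactly where $\lambda_{\min}/\lambda_{\max}\ge C$ is indispensable, since without a lower bound on the ``thin'' directions of $\bm{B}$ the distortion of the norm of $w_{0}$ in those directions could not be absorbed and the reduction to the isotropic case would fail. A lesser, CH-specific point is keeping the slowly varying correction to the spectral tail under control uniformly in the diverging parameter $\alpha=O(\sqrt{\ln\ln n})$, which is handled as in the isotropic CH lemmas.
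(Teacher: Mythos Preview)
Your reduction is correct in spirit but takes a longer detour than the paper. The paper never changes variables or leaves the fixed domain $\mathcal{T}$; instead it argues entirely at the level of spectral densities. From the eigenvalue condition $\lambda_{\min}/\lambda_{\max}\ge C$ one gets, for every $\bm{\lambda}\in\mathbb{R}^d$, that $\bm{\lambda}^T\bm{B}^{-1}\bm{\lambda}$ and $|\bm{B}|^{1/2}$ are each trapped between constant multiples of the corresponding quantities for the isotropic matrix $\lambda_{\max} I$. Plugging this into the explicit expressions for $m_M^{\bm{B}}$ and $m_{CH}^{\alpha,\bm{B}}$ yields a two-sided spectral sandwich
\[
C^{-1}\,m_M^{\lambda_{\max}}(\bm{\lambda})\le m_M^{\bm{B}}(\bm{\lambda})\le C\,m_M^{\lambda_{\max}}(\bm{\lambda}),
\]
and similarly for CH. Since both the RKHS norm in~\eqref{RKHS of rescaled matern}--\eqref{RKHS of rescaled CH} and the metric-entropy bound feeding into the small-ball exponent are computed from the spectral density, this sandwich transfers Lemmas~\ref{matern small ball}--\ref{CH decentering} verbatim from the isotropic rescaled case (with $\phi^{-2}$, resp.\ $\beta^{-2}$, replaced by $\lambda_{\max}$) to the anisotropic one, on the \emph{original} domain $\mathcal{T}$. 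The rate then drops out exactly as before.

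The practical difference is that your change-of-variables route forces you to carry out the RKHS-approximation argument on the growing domain $\bm{B}^{1/2}\mathcal{T}$ and to track how the $\eta$-regularity of $w_0$ transforms under pullback --- precisely the ``main obstacle'' you flag. The paper's spectral-sandwich route sidesteps this completely: the domain stays fixed, $w_0$ is never transformed, and the only thing that changes is a constant in front of the spectral density. Your approach would eventually close (the $\lambda_{\min}\asymp\lambda_{\max}$ condition does control the norm distortion under pullback by a bounded factor, since the Jacobian of $\bm{B}^{1/2}$ has condition number $\le C^{-1/2}$), but it is needlessly laborious compared to the two-line reduction the paper uses.
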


 In Theorem \ref{rescale anisotropic theorem}, the condition $\lambda_{\min}/\lambda_{\max} \ge C >0$ implies the non-Euclidean distance is approximately the Euclidean distance times a constant. This theorem shows that under these conditions,  Gaussian process prior with anisotropic  covariance
function  and Gaussian process prior with isotropic  covariance
function yield similar posterior concentration properties.

We also mention that \cite{bhattacharya2014anisotropic} discuss  Gaussian process priors with anisotropic  covariance functions. Their anisotropy matrix $\bm{B}$ is the diagonal matrix, with gamma prior  on the diagonal elements (after taking some powers).  Their Bayesian procedure leads to the minimax optimal
rate of posterior contraction (up to a logarithmic factor) for the anisotropic H\"older space they define. They also  prove that the optimal prior choice in the isotropic case leads to a sub-optimal convergence rate if the true function has anisotropic smoothness. In contrast to their work, we do not need to assume the anisotropy matrix $\bm{B}$ is the diagonal matrix. This is relevant because even in the simple geometric anisotropy case, $\bm{B}$ need not be diagonal \citep{haskard2007anisotropic}. Further, we establish the optimal minimax contraction rate without the logarithmic factor.  However, the performance of our anisotropic prior on their anisotropic H\"older space is unknown.

\section{Simulation Results} \label{section5}
In this section, we consider nonparametric regression with fixed design and additive Gaussian errors as described in Section \ref{section2.4}.  We  estimate the regression function $w$ based on observations $Y_1, \ldots, Y_n$  and  fixed covariates $\bm{x}_1, \ldots, \bm{x}_n$ from the set $\mathcal{T}=[0,1]^d$.  We also assume $\varepsilon_i \stackrel{\text { i.i.d }}{\sim} N\left(0, \omega\right)$, with $\omega$ known.  

In what follows, we compare  for  different true regression functions the posterior concentration performances of  rescaled  Mat\'ern and  CH priors with our choice of rescaling, and rescaled squared exponential process priors with rescaling parameter set as in \cite{van2007bayesian}. We also consider   Mat\'ern, CH, squared exponential priors with parameters estimated by the method of  maximum likelihood;  and hierarchical Mat\'ern,  CH, and  squared exponential processes as in Section  \ref{section3.4},  by putting suitable priors on  the rescaling parameters.  We take the squared exponential covariance function to be $\exp{(-\|h\|^2/c)}$.   We use the procedures of \cite{stein1999interpolation} and \citet{ma2022beyond} to compute the MLE estimators for all parameters, except $v$, under the chosen covariance models. However, we fix the smoothness parameter $v$ and do not estimate it, since there are known identifiability issues with estimating the smoothness parameter \citep{gu2018robust}. 

According to the proof of theorems in Section \ref{section3},  we obtain the minimax rate  when   $f \in H^{\eta}[0,1]$ but $f \notin H^{\theta}[0,1]$ for all $\theta>\eta$.  Further, if we let the true function to be analytic, the posterior  contraction rate is the parametric rate $n^{-1/2}$.  Thus, for a reasonable choice of the true function where a difference under various covariances can be expected, we prefer rough functions. For example, when $d=1$, the realization of  Brownian motion is continuous but 
nowhere differentiable almost surely \citep{morters2010brownian}. Actually, Brownian motion is almost everywhere locally
$\alpha$-H\"older continuous for all $\alpha<1/2$; and for all $\alpha>1/2$,   it fails to be locally $\alpha$-H\"older continuous almost everywhere.  Section 4 of  \cite{kanagawa2018gaussian}  demonstrates that   GP sample functions are \emph{rougher}, or less regular, than RKHS (corresponding to GP prior) functions,  so  taking  realizations of GP  as true functions  is appropriate in our simulations. 

We first analyze the simple case when the dimension $d$ of the covariate $\bm{x}$ is $1$. The performance of posterior concentration is illustrated by comparing    the  predictive performance  based on  mean-squared prediction errors (MSPE), empirical coverage of the 95\% predictive confidence intervals (CVG) and the average length of the 95\% predictive confidence intervals (ALCI) at held-out locations. The $d=2$ case is explored similarly to the $d=1$ case.

\subsection{The  Case of $d=1$}
We simulate $n=300$ data points sampled uniformly from the interval $[0,1]$.  Among these, $100$ data points  are picked uniformly as the testing set, the remaining $200$ data points constitute the training set. Parameters are estimated by the method of maximum likelihood or a Bayesian approach; or they are set by rescaling, as described in the next paragraph. Using the set of the estimated parameters, we then  calculate MSPEs, CVGs and ALCIs on other 30 replicates of the training and testing sets of the same size.  

We  set the smoothness parameter $v=2$  for both CH and Mat\'ern. For the rescaling approach, we estimate  the parameters  $\alpha,\beta,\phi,c$ by the method of maximum likelihood, before rescaling these parameters in order to make CVG as close to 0.95 as possible on our testing and training data set. We call these parameters as the optimal parameter choice for rescaling.  For  hierarchical Mat\'ern,  CH, and  squared exponential processes,  we generate the posterior samples of the rescaling parameters given data  by the Metropolis-Hastings algorithm \citep{chib1995understanding} with 500 burn-in samples, followed by {5000} MCMC samples. {To satisfy the condition in Theorem \ref{adaptive matern},  we put $\mathrm{Gamma}(1,1)$ priors on $A^{kd}$ for  hierarchical Mat\'ern,  CH processes ($k$ to be determined for each specific case) and   put $\mathrm{Gamma}(1,1)$ priors on $A^{d}$ for  hierarchical  squared exponential process \citep{van2009adaptive}.}

We first let $w(x)$ be a realization of Brownian motion (times $100$). Then $w(x)$ has regularity $1/2$, and let the noise variance be $\omega=1$.  In Figure~\ref{d=1 brownian cv},  we   compare  the MSPE, ALCI and CVG  of rescaled, hierarchical and MLE-based CH, Mat\'ern, squared exponential process priors.  %with hierarchical CH, Mat\'ern, squared exponential process, and  CH, Mat\'ern, squared exponential process priors whose  parameters are estimated by the method of maximum likelihood. 
For the hierarchical model, we set the smoothness parameter in CH and Mat\'ern  covariance to be 5 and  $k=3$  to satisfy the condition in Theorem \ref{adaptive matern}. 

We find the performance of hierarchical method with same type of covariance functions  to be better than the rescaling and MLE methods. The hierarchical method has  better CVGs, MSPEs and ALCIs.  Comparing the rescaling  and MLE methods, it is apparent that the rescaling method has better CVGs while  ALCIs are larger. MSPEs of rescaling method are also slightly better.  CH process prior outperforms Mat\'ern prior with smaller ALCIs and MSPEs, and the   Mat\'ern prior  also outperforms squared exponential prior in this case.  This can be expected because rescaled CH and Mat\'ern priors can attain the optimal minimax rate for  $\eta$-regular functions, while
the rescaled squared exponential prior can only achieve the   minimax rate for  $\eta$-regular function up to a logarithmic factor \citep{van2007bayesian}.  These simulations also indicate that CH   processes are more suitable for rough true function than Mat\'ern, and both of these are better than the smooth squared exponential process prior.

\begin{figure}[!t]
    \centering
    {\includegraphics[width=1\textwidth]{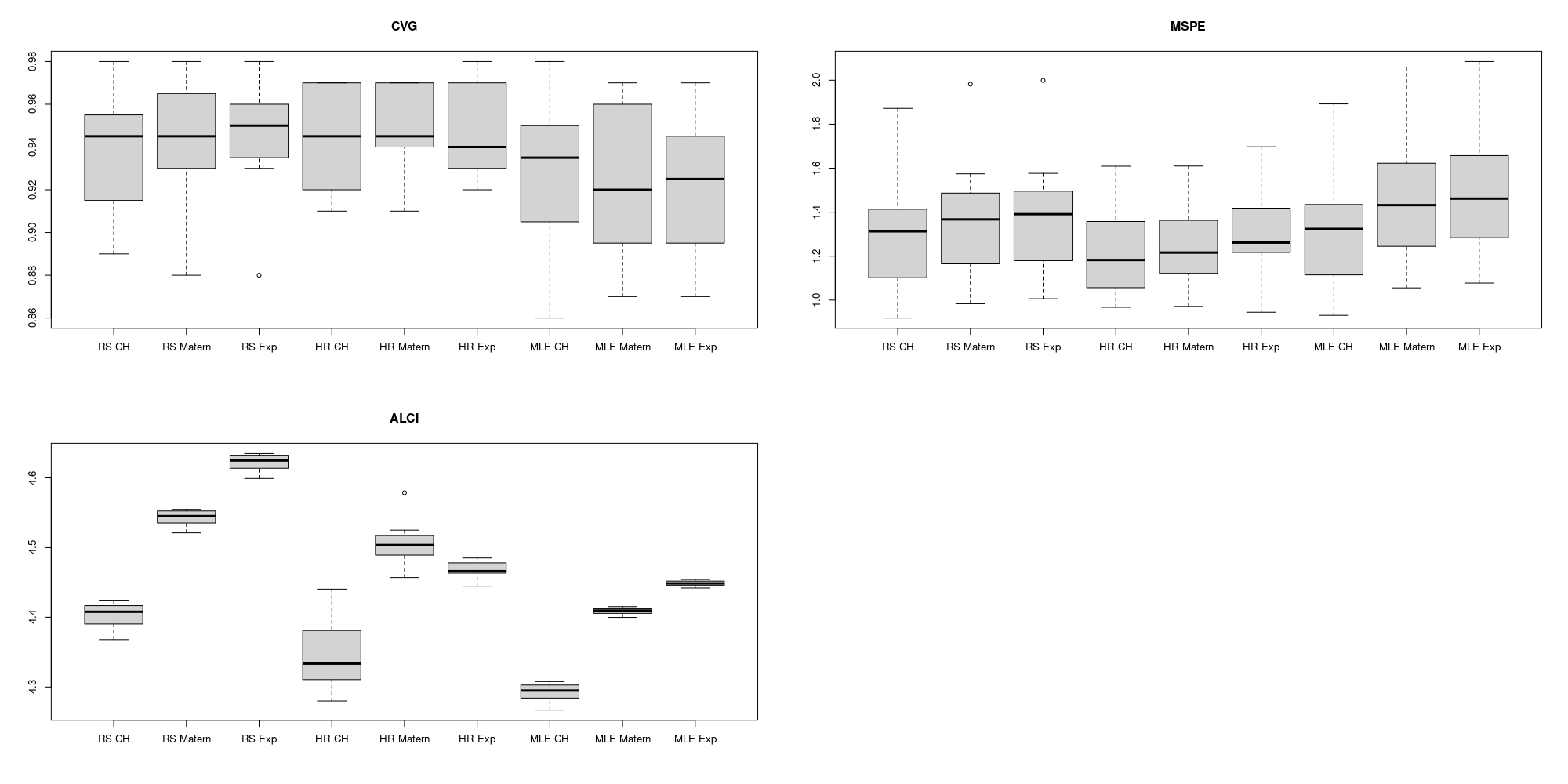}} 
    \caption{ (Left to right). Boxplots of coverage of 95\% confidence intervals (CVG), mean squared prediction error (MSPE) and average length of the confidence intervals (ALCI). Results are for CH, Mat\'ern and squared Exponential (Exp) covariances, with parameters set via rescaling (RS), hierarchical (HR) or MLE methods. Boxplots are computed over 30 randomly chosen training and testing data sets. Here $d=1$ and the true function $f(x)$ is a realization of Brownian motion.\label{d=1 brownian cv}}
\end{figure}

\begin{figure}[!b] 
    \centering
    {\includegraphics[width=1\textwidth]{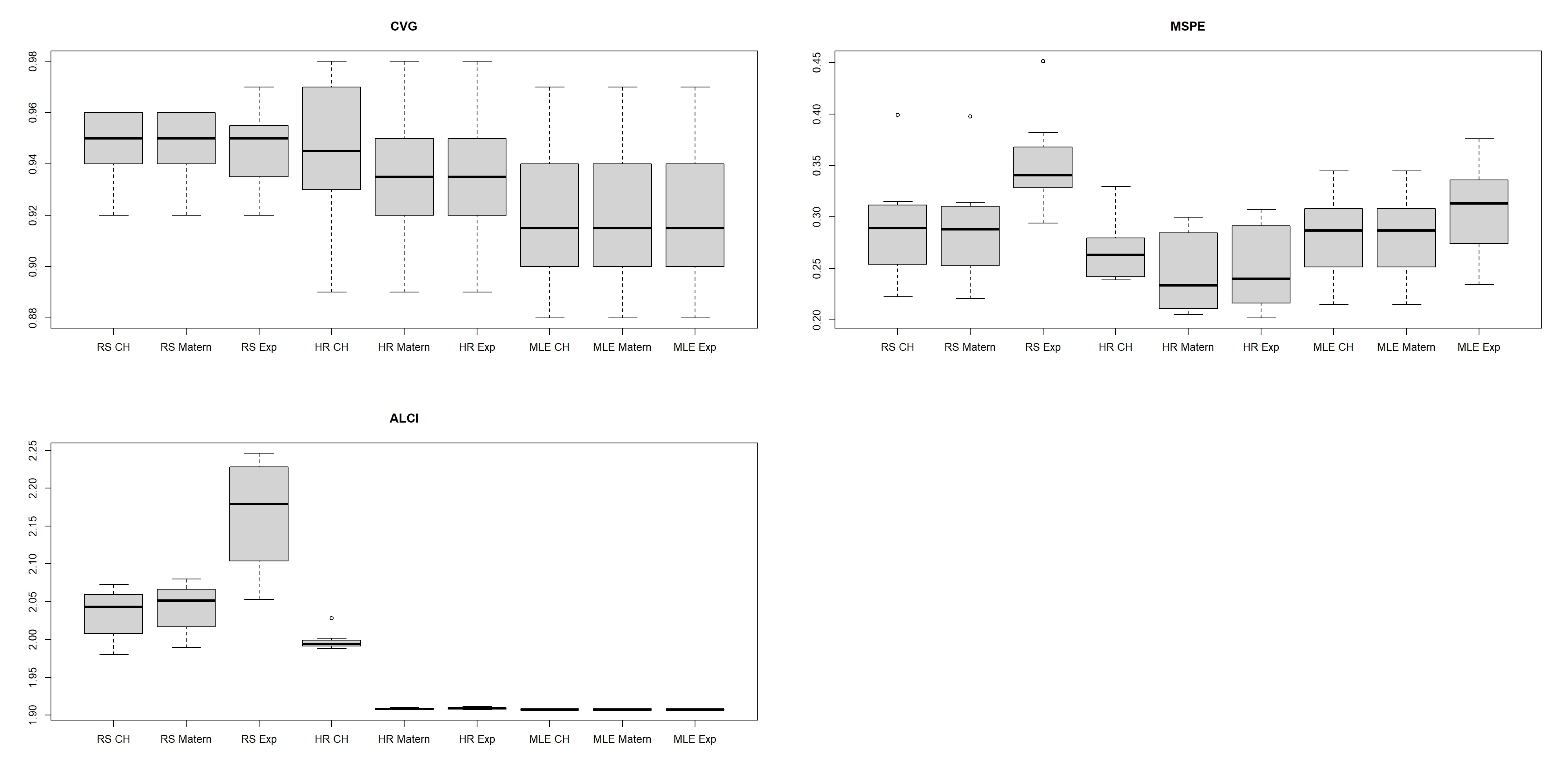}} 
    \caption{(Left to right). Boxplots of coverage of 95\% confidence intervals (CVG), mean squared prediction error (MSPE) and average length of the confidence intervals (ALCI).  Results are for CH, Mat\'ern and squared Exponential (Exp) covariances, with parameters set via rescaling (RS), hierarchical (HR) or MLE. Boxplots are computed over 30 randomly chosen training and testing data sets. Here $d=1$, and the true function $f(x)$ is a realization of stationary Gaussian process with mean $0$ and covariance function $M(h;1,1,1)$. \label{d=1 m111 cv}}
\end{figure}
Next,  we consider $w(x)$ to be  a realization of a stationary Gaussian process with mean 0 and covariance function $M(h;1,1,1)$, and set $\omega=0.5$. By \cite{porcu2023mat}, the RKHS of the corresponding Gaussian process is the Sobolev space $H^{3/2}[0,1]$.  Section 4 of \cite{kanagawa2018gaussian} shows the sample path of this process does not belong to the RKHS almost surely. However Corollary 1 of \cite{scheuerer2010regularity} also confirms this sample path is almost surely in $H^1[0,1] $, so $w(x)$ is smoother than the realizations of Brownian motion, but still has regularity less than $1.5$.  For the hierarchical model, we set the smoothness parameter in CH and Mat\'ern  covariance to be 5 and  $k=3$  to satisfy the condition in Theorem \ref{adaptive matern}.   Figure \ref{d=1 m111 cv} deals with this example and displays the same quantities as  Figure \ref{d=1 brownian cv}.  In this case 
the performance of rescaling and hierarchical methods with same type of covariance functions  are better than  the MLE method with  better CVGs, MSPEs and ALCIs.  The hierarchical method has smaller MSPEs than the rescaling method,  but its CVG is slightly worse. Compared to the Brownian motion example, the true function is smoother and  the difference of performance between CH and Mat\'ern priors is negligible in this case.  Both of these still outperform the squared exponential  prior.

We also notice in our simulations that  when $\beta$, $\phi$, $c$ go to $0$ too quickly as $n \to \infty$,  posterior concentration results do not hold for these three priors. This is supported by  Theorems \ref{lower bound of rescaled matern} and \ref{lower bound of rescaled CH}.

\subsection{The  Case of $d=2$}
In the $d=2$ case,  we simulate on  $n = 300$ data points uniformly drawn from $[0,1]^2$ and select 80 data points uniformly as the testing data, the rest as training data. We  set the smoothness parameters in CH and Mat\'ern Class to be $v=2$. For this data, the  parameters of interest are obtained by MLE and rescaled methods.  For the hierarchical model, we set the smoothness parameter in CH and Mat\'ern  covariance to be 6 and  $k=7$  to satisfy the condition in Theorem \ref{adaptive matern}.  We repeat the procedure on 30 randomly picked testing and training data sets. 

We  let the  true function $f(x)$ be a realization of stationary Gaussian process with mean 0 and covariance function $M(h;1,1,1)$, and $\omega=1$. This true function is differentiable but not in the Sobolev space $H^2([0,1]^2)$. From Figure~\ref{d=2 m111 cv},  among the 3 methods, MLE has worst  CVGs and ALCIs,  while the performances of rescaling and  hierarchical methods are   similar. Both CH and Mat\'ern priors outperform squared exponential process prior, and CH priors are slightly better than Mat\'ern.

\begin{figure}[!h]
    \centering
    {\includegraphics[width=1\textwidth]{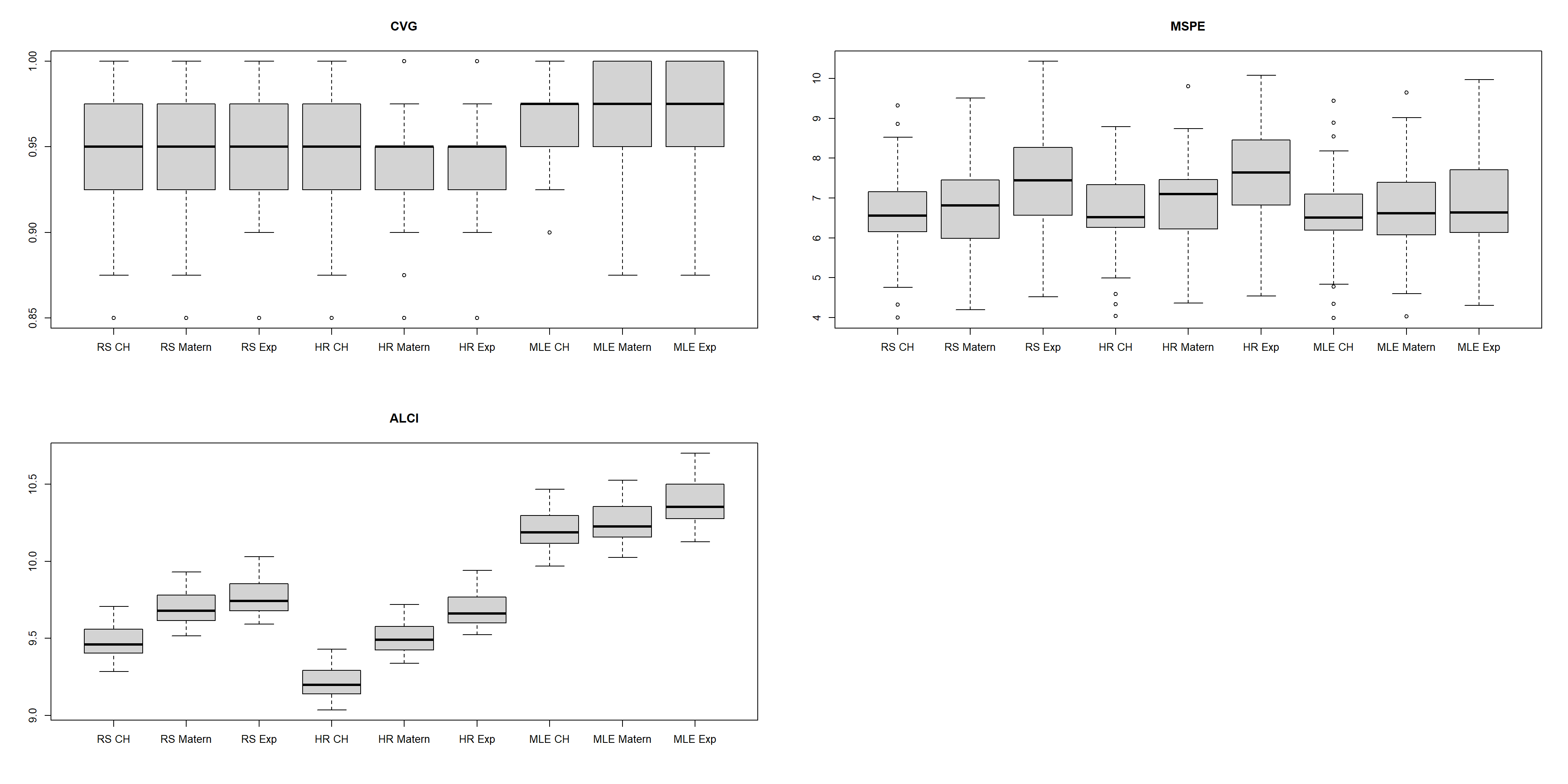}} 
    \caption{(Left to right). Boxplots of coverage of 95\% confidence intervals (CVG), mean squared prediction error (MSPE) and average length of the confidence intervals (ALCI). Results are for CH, Mat\'ern and squared Exponential (Exp) covariances, with parameters set via rescaling (RS), hierarchical (HR) or MLE. Boxplots are computed over 30 randomly chosen training and testing data sets. Here $d=2$, and the true function $f(x)$ is a realization of stationary Gaussian process with mean 0 and covariance function $M(h;1,1,1)$.\label{d=2 m111 cv}}
\end{figure}

\section{Results on Atmospheric NO$_2$ data} \label{section6}
In this section we study the relationship between location and  the level of Nitrogen Dioxide (NO$_2$), a known environmental pollutant,  with  the nonparametric normal regression model described in Section \ref{section2.4}. Our data are the levels of NO$_2$ concentration,  measured in parts per million (ppm), the city of York, UK from December, 2022. We aim to predict the level of NO$_2$ by location information $(X,Y)$ = (latitude, longitude).  To evaluate the performance,  we randomly select 65 data points as the validation set and the rest 154 data points as the training set.  The training and validation data sets are displayed in Figure \ref{training and validation}.  We select parameters based on the training set, and then evaluate the prediction performance of our nonparametric normal regression model with rescaled and hierarchical CH, Mat\'ern and squared exponential priors on the testing set.

In our theoretical results,  the smoothness parameter $v$ should be greater than the regularity of the true function. Therefore, here we set $v=5$ for CH and Mat\'ern covariances as a sufficiently large $v$.  Then, we use maximum likelihood method to estimate the  parameters in CH, Mat\'ern  and squared exponential covariance functions. We set those estimated parameter as initial value and we rescale $\beta$, $\phi$, $c$ to  make CVGs  be as near 0.95 as possible.   For the  hierarchical model, we set the smoothness parameter in CH and Mat\'ern  covariance to be 10 and  $k=4$  to satisfy the condition in Theorem \ref{adaptive matern}. To avoid singularity in matrix calculation, we center and scale $X$ with $100(X-\bar{X})$ and $Y$ with $100(Y-\bar{Y})$. We also rescale  the level of NO$_2$ concentration by dividing it by the sample maximum.  The results are repeated over 30 random splits of the data set, into training and testing sets of the same size. We summarize the results in Figure~\ref{real data cv}. From  the boxplots, we observe the rescaled method has better CVG than the MLE and hierarchical method. However, its MSPEs are worse.  The MLE and hierarchical methods have very similar performances.  In this case, all methods related to the CH process prior 
perform much better than the Mat\'ern and squared exponential process priors.

Figure \ref{figure.sd}  displays the scatterplots of residuals versus predicted values under rescaled and hierarchical CH, Mat\'ern and squared exponential methods on the validation set, along with the posterior predictive intervals. Out of the 65 validation data points, 63, 61, 61 of the validation data  points lie inside the 95\% predictive intervals for the rescaled CH, Mat\'ern and squared 
 exponential method;  63, 64, 64 of the validation data  points lie inside the 95\% predictive intervals for the hierarchical CH, Mat\'ern and squared 
 exponential method. These methods have similar coverage.
However, the 95\% predictive intervals from the rescaled and  hierarchical CH method are  shorter in general compared to rescaled and hierarchical Mat\'ern or squared exponential. Overall, rescaled and hierarchical CH perform the best, with rescaled and hierarchical Mat\'ern, rescaled squared exponential performing similarly, and both performing worse than CH.

\begin{figure}[!t]
\centering
\includegraphics[width=12cm]{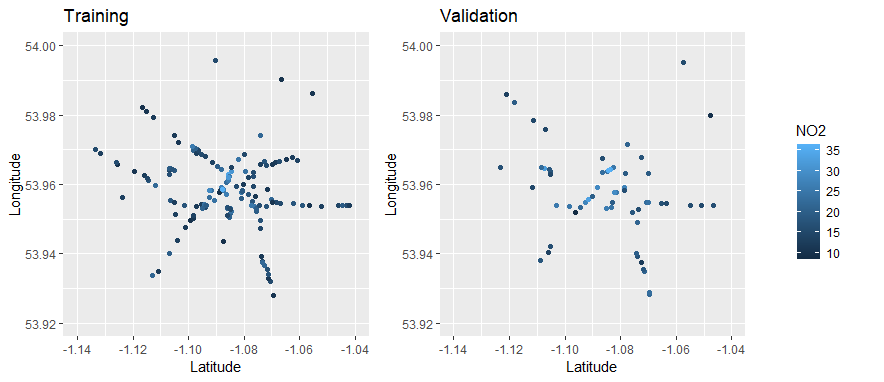}
\caption{Scatter plot of NO$_2$ measurements in York, UK in December 2022.}
    \label{training and validation}
\end{figure}

\begin{figure}[!h] 
    \centering
    \subfigure{\includegraphics[width=1\textwidth]{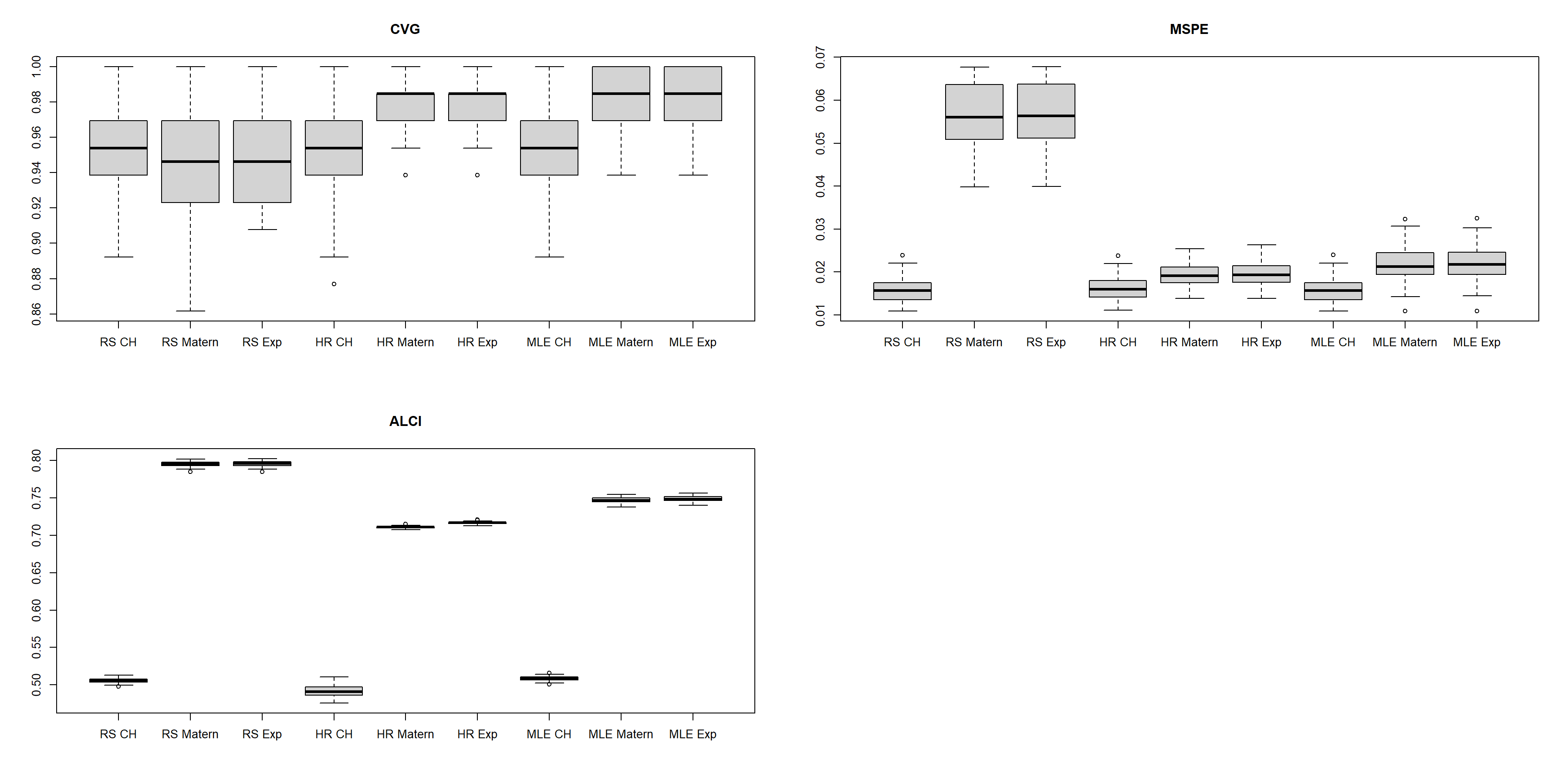}} 
    \caption{(Left to right). Boxplots of coverage of 95\% confidence intervals (CVG), mean squared prediction error (MSPE) and average length of the confidence intervals (ALCI). Results are for CH, Mat\'ern and squared Exponential (Exp) covariances, with parameters set via rescaling (RS), hierarchical (HR) or MLE, for the NO$_2$ data.\label{real data cv}}
\end{figure}

\begin{figure}[!h]
    \centering
    \subfigure{\includegraphics[width=1\textwidth]{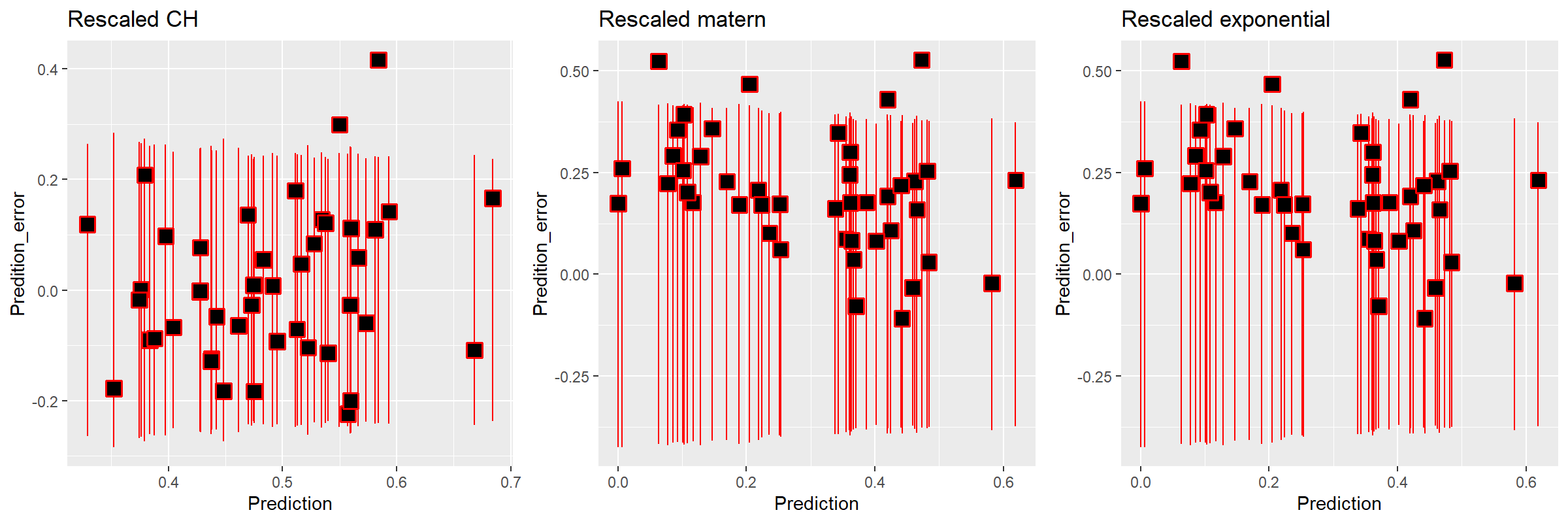}} 
    \subfigure{\includegraphics[width=1\textwidth]{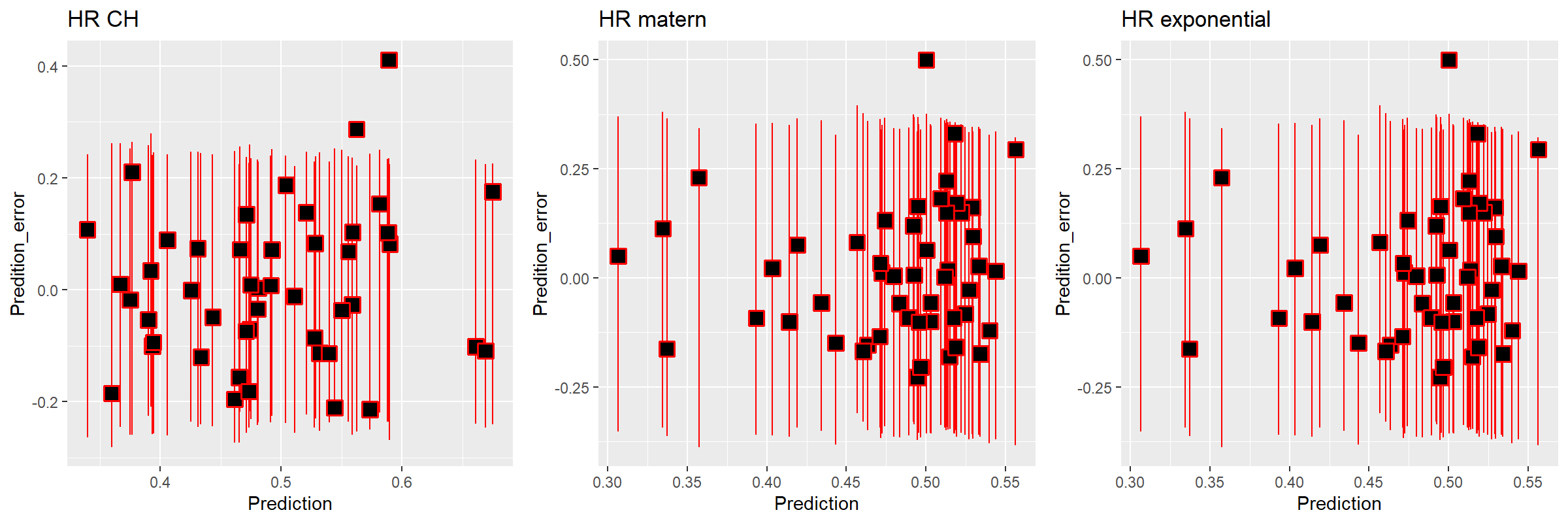}}
   
    \caption{Prediction error vs. residuals for the NO$_2$ data validation set for CH, Mat\'ern and squared exponential covariances under rescaling and hierarchical settings. Bars indicate posterior predictive 95\% intervals.}
        \label{figure.sd}
\end{figure}

\section{Discussion} \label{section7}
This paper studies  posterior concentration properties of nonparametric normal regression with fixed design. For $\eta$-regular true functions,  we find that by rescaling the parameters in Mat\'ern and CH classes, we can obtain  optimal minimax  posterior contraction rate.  We also obtain the optimal minimax  posterior contraction rate for hierarchical  Mat\'ern and CH process priors without a knowledge of the true regularity, resulting in a practically useful procedure. Although we demonstrate the optimal minimax rates, there are still areas of further investigations.

First of all, we obtain the optimal minimax posterior contraction rate by rescaling. However,   the choice of the rescaling parameter depends on the smoothness of the function of interest ($\eta$ in our case), which is always unknown in practice. We handle this problem by assigning  a hyperprior on  rescaling parameter.  It is also possible to  choose the lengthscale in a data-dependent manner.  \cite{szabo2013empirical} apply an empirical Bayes method and obtain  the rescaling parameter by  maximizing the marginal likelihood. Similar ideas are discussed  in \cite{knapik2016bayes} and \citet{rousseau2017asymptotic}.
%However, the theoretical properties of this procedure are known only  for the Gaussian white noise model or sequence parameters with sieve priors.  
The posterior contraction rate for nonparametric regression model with stationary Gaussian process priors remains to be explored under a lengthscale parameter set by maximizing the marginal likelihood in an empirical Bayes procedure. { \cite{castillo2024deep} generalize the approach of \cite{van2009adaptive} and introduce deep horseshoe Gaussian process as prior, showing this prior leads to near minimax-optimal contraction rates  for their compositional function classes. Following our approach, one may also study how rescaled and hierarchical  Mat\'ern or CH process priors perform on these function classes.}

Our theoretical results  only deal with fixed design over  compact domains.  Following  the results of metric entropy  for function spaces on unbounded domains as in  \cite{nickl2007bracketing}, a study of posterior contraction over unbounded domains is an interesting avenue for  future work.

In our paper, we restrict our interest to nonparametric normal regression with fixed design.  For the random design case,  one may assume that given the function $f:[0,1]^d \rightarrow \mathbb{R}$ on the $d$-dimensional unit cube $[0,1]^d$,  the data $\left(X_1, Y_1\right), \ldots,\left(X_n, Y_n\right)$ are independent, $X_i$ having a density $\pi$ on $[0,1]^d$, and $Y_j$s are generated according to $Y_j=f\left(X_j\right)+\varepsilon_j$,  errors $\varepsilon_j \sim N\left(0, \sigma^2\right)$  are independent given $X_j$. \cite{van2011information,pati2015optimal,10.1214/20-AOS2043} obtain  posterior contraction rates for random design case, and their works show the key to get (nearly) optimal rate is to define a proper discrepancy measure.  In a future work, one may attempt to construct
the discrepancy measure under which rescaled Mat\'ern and CH  process priors attain the optimal minimax rates under random design.
%%%%%%%%%%%%%%%%%%%%%%%%%%%%%%%%%%%%%%%%%%%%%%
%% Example with single Appendix:            %%
%%%%%%%%%%%%%%%%%%%%%%%%%%%%%%%%%%%%%%%%%%%%%%
\begin{appendix}
%\section*{Appendix}  
\label{appendix}

\section{Proofs of Main Results}
\subsection{Proof of Lemma \ref{matern small ball}} \label{proof matern small ball}
\begin{proof}
The small ball exponent can be obtained from the metric entropy (logarithm of the $\epsilon$-covering number) of  unit ball $\mathbb{H}_1$ of the RKHS of the Gaussian process $W$  \citep{li1999approximation}. The transform $\mathcal{F}_{\phi} \psi$  of $\psi$  given in (\ref{rescaled matern map}) is, up to constants, the function $g= \psi \cdot {m_{M}^{\phi}}$, and for the minimal choice of $\psi$ as in (\ref{RKHS of rescaled matern}), for Mat\'ern covariance we have:
\begin{displaymath}
\|\mathcal{F}_{\phi} \psi\|_{\mathbb{H}^{\phi}}^2  =\int |g(\lambda)|^2 \left({m_{M}^{\phi}}(\lambda)\right)^{-1} d\lambda\\
=\int |g(\lambda)|^2 (1+\lambda^2)^{(v+d/2)}\frac{(1+\lambda^2)^{-(v+d/2)}}{m_{M}^{\phi}(\lambda)} d\lambda.
\end{displaymath}
Since $\frac{(1+\lambda^2)^{-(v+d/2)}}{m_{M}^{\phi}(\lambda)} \gtrsim \phi^{ 2v}$, we have $\|\mathcal{F}_{\phi} \psi\|_{\mathbb{H}^{\phi}}^2  \geq  C \phi^{2v} \|g\|^2_{2,2,v+d/2}$, and thus the unit ball of the RKHS is contained in the Sobolev ball with radius $\phi^{-v}$ (up to a constant) of order $v+d/2$.  By  Theorem 2.7.4 in \cite{wellner2013weak}, the metric entropy of   such a Sobolev ball is bounded  above by a constant times $(\phi^{-v} )^{\frac{d}{v+d/2}}  \varepsilon^{-\frac{d}{v+d/2}}$. Next, by Theorem 1.2 of \cite{li1999approximation},
\begin{equation} \label{small ball of matern invariation}
%\begin{split}
    \varphi_0(\varepsilon) \lesssim \varepsilon ^{-\frac{2 \frac{d}{v+d/2}}{2-\frac{d}{v+d/2}}}\left[(\phi^{-v} )^{\frac{d}{v+d/2}} \right]^{\frac{2v+d}{2v}}=\varepsilon^{-d/v} \phi^{-d}.
 %   \end{split}
\end{equation}
From the proof of Proposition 3.1 of \citet{li1999approximation}, this bound holds for all $\varepsilon >0$ satisfying:
\begin{displaymath}
    \phi^{\frac{vd}{v+d/2}} \lesssim (\varphi_0(\varepsilon/2))^{\frac{d}{2(v+d/2)}} \varepsilon^{-\frac{d}{v+d/2}}.
\end{displaymath}
 By assumption  we also have $\phi<1$. Thus, 
 $$
\varphi_0(\varepsilon/2) = - \log P\left(\sup_{t \in \mathcal{T}} |W_t^\phi| \le \varepsilon/2\right)  \ge - \log P\left(\sup_{ t \in \mathcal{T}} |W_t | \le \varepsilon/2\right),
$$
where the last inequality follows directly form the definition of the  rescaled process and 
 (\ref{small ball of matern invariation}) holds for all
 $\varepsilon$ in an interval independent of $\phi$, since the right hand side is independent of $\phi$. This  completes the proof.
\end{proof}

\subsection{Proof of Lemma \ref{matern decentering}} \label{proof matern decentering}
\begin{proof}
Let $\zeta$, $\zeta_{\phi}$, $h$ be the same construction as in the proof of Lemma 11.37 in \cite{ghosal2017fundamentals}. Let $\kappa: \mathbb{R} \rightarrow \mathbb{R}$ be a function with a real, symmetric Fourier transform $\hat{\kappa}(\lambda)=(2 \pi)^{-1}\int e^{i \lambda t} \kappa(t) d t$, then $\hat{\kappa}$ equals $1/(2 \pi)$ in a neighborhood of 0 which has compact support, with  $\int \kappa(t) d t=1$ and $\int(i t)^k \kappa(t) d t=0$ for $k \geq 1$. For $t=\left(t_1, \ldots, t_d\right)$, define $\zeta(t)=\prod_{i=1}^{d}\kappa\left(t_i\right)$. Then $\zeta(t)$ integrates to 1, has finite absolute moments of all orders, and vanishing moments of all orders bigger than 0.

For $\phi>0$, set $\zeta_\phi(x)=\phi^{-d} \zeta(x / \phi)$ and $h=\zeta_{\phi^{\theta}}* w_0$, where $\theta \ge 1$ is to be determined. By similar arguments in \cite{van2009adaptive}, it follows that $\left\|w_0-\zeta_{\phi^\theta} * w_0\right\|_{\infty} \leq C_{w_0} \phi^{\eta \theta}$ and $C_{w_0}$ only depends on $w_0$. We assume that the support of $\hat{\zeta}(\lambda)$ is in the set $\{\lambda:\|\lambda\| \le M\}$. The Fourier transform of $h$ is  $\hat{h}(\lambda)=\hat{\zeta}(\phi^{\theta} \lambda)\hat{w_0}(\lambda)$. Then $h  =  2 \pi \int e^{-it\lambda} \hat{\zeta}(\phi^{\theta} \lambda)\hat{w_0}(\lambda) d \lambda = 2 \pi \mathcal{F}_{\phi}\left( \frac{\hat{\zeta}(\phi^{\theta} \lambda)\hat{w_0}(\lambda)}{m_M^{\phi}(\lambda)}\right)$.
By (\ref{RKHS of rescaled matern})  we have: 

\begin{equation}
\begin{split}
\|h\|_{\mathbb{H}^{\phi}}^2  & =  \left\| 2 \pi \mathcal{F}_{\phi}\left( \frac{\hat{\zeta}(\phi^{\theta} \lambda)\hat{w_0}(\lambda)}{m_M^{\phi}(\lambda)}\right)\right\|_{\mathbb{H}^{\phi}}^2 \\
& \leq (2\pi)^2  \int|\hat{\zeta}(\phi^{\theta} \lambda)\hat{w_0}(\lambda)|^2 \frac{1}{m^{\phi}_M(\lambda)} d\lambda \\ 
&\leq \tilde{D}_{w_0} \cdot  \sup_\lambda\left[(1+{\|\lambda\|^2})^{-\eta} \left(m^{\phi}_M(\lambda)\right)^{-1} |\hat{\zeta}(\phi^{\theta} \lambda)|^2\right] \times \|w_0\|^2_{2,2,\eta}\\
& =  \tilde{D}_{w_0} \cdot \sup_{\|\lambda\| \leq M/\phi^{\theta}}\left[(1+{\|\lambda\|^2})^{-\eta}\left(m^{\phi}_M(\lambda)\right)^{-1} |\hat{\zeta}(\phi^{\theta}\lambda)|^2\right] \times \|w_0\|^2_{2,2,\eta}\\
&\leq D_{w_0} \cdot \sup_{\|\lambda\| \leq M/\phi^{\theta}}\left[(1+{\|\lambda\|^2})^{-\eta}\left(m^{\phi}_M(\lambda)\right)^{-1} \right] \times \|w_0\|^2_{2,2,\eta}\\
&= D_{w_0} \cdot\max_{\|\lambda\|=0,M/\phi^{\theta}}\left[(1+{\|\lambda\|^2})^{-\eta}\left(m^{\phi}_M(\lambda)\right)^{-1} \right] \times \|w_0\|^2_{2,2,\eta} \\
&= D_{w_0} \cdot \max \left\{\phi^{-d}, \phi^{2v-2\theta(v+d/2-\eta)}\right\}\times \|w_0\|^2_{2,2,\eta},
\end{split}
\end{equation}
where $\tilde{D}_{w_0}$, $D_{w_0}$ only depend on $w_0$, and the  second last equality is due to the fact that $\log[(1+{\|\lambda\|^2})^{-\eta}\left(m^{\phi}_M(\lambda)\right)^{-1} ]$  attains its maximum at the boundary, i.e., $\|\lambda\|=0$ or $M/\phi^{\theta}$ (by taking derivative with respect to $\|\lambda\|^2$ ).  When $\theta > \frac{2v+d}{2v+d-2 \eta}$, we have $\|h\|_{\mathbb{H}^{\phi}}^2 \lesssim  \phi^{2v-2\theta(v+d/2-\eta)}$.
\end{proof}

\subsection{Proof of Theorem \ref{lower bound of rescaled matern}}
\begin{proof}
For the rescaled Mat\'ern class,  when $\|\lambda\| \ge \phi^{-1}$, its spectral density satisfies:
$$
m^{\phi}_M(\lambda) \ge c_0 \phi^{-2v} \|\lambda\|^{-(2v+d)},
$$
where $c_0$ does not depend on $\phi$.
By  the corollary in  \cite{lifshits1987small}, we have:
$$
P\left(\sup_{t \in \mathcal{T}} |W_t^\phi| \le \varepsilon\right) \le \exp(-C \phi^{-d} \varepsilon^{-d/v}),
$$
where $C$ is a constant that only depends on $v$ and $d$. Thus, we have $\varphi_0(\varepsilon) \gtrsim \phi^{-d} \varepsilon^{-d/v}$.

The second part of this theorem can be obtained by applying Theorem 11.23 of 
 \cite{ghosal2017fundamentals}, since  when  $\phi=o(n^{-\frac{v-\eta}{(2\eta+d)v}}),\,\varepsilon_n=
   \left( {\phi^{-d}}/{n}\right)^{\frac{v}{2v+d}}$ satisfies the rate equation $\varphi_{w_0}(\epsilon_n) \le n \epsilon_n^2$ (by the statement before Theorem \ref{rescale matern theorem}). Thus, $\varphi_{w_0}(\delta_n) \ge \varphi_0(\delta_n) \gtrsim \phi^{-d} {\delta_n}^{-d/v} \ge C_0 n\epsilon_n^2$ for sufficiently large constant $C_0$. If $\delta_n=(\frac{\phi^{-d}}{n})^{\frac{v}{2v+d}} \succ n^{-\frac{\eta}{2\eta+d}}$, then the contraction rate is suboptimal since $\delta_n$, the lower bound of contraction rate   has larger order than the optimal rate and the last assertion follows when $\phi^{-d} \gtrsim n$, $\delta_n \ge 1$.
\end{proof}

\subsection{Proof of Lemma \ref{CH small ball}}
\begin{proof}
 Let $g= \psi m$, and for  the minimal choice of $\psi$ as in (\ref{RKHS of rescaled CH}), we have for the CH covariance:
\begin{displaymath}
\|\mathcal{F}_{(\alpha,\beta)} \psi\|_{\mathbb{H}^{\alpha,\beta}}^2  =\int |g(\lambda)|^2 (m_{CH}^{\alpha,\beta})^{-1}(\lambda) d\lambda\\
=\int |g(\lambda)|^2 (1+\lambda^2)^{(v+d/2)}\frac{(1+\lambda^2)^{-(v+d/2)}}{m_{CH}^{\alpha,\beta}(\lambda)} d\lambda.
\end{displaymath}
By Lemma \ref{inequality about ch spectral density}, we have $\frac{(1+\lambda^2)^{-(v+d/2)}}{m_{CH}^{\alpha,\beta}(\lambda)} \gtrsim \frac{\Gamma(\alpha) \beta^{2v}}{\Gamma(\alpha+v)}$, then $\|\mathcal{F}_{(\alpha,\beta)} \psi\|_{\mathbb{H}^{\alpha,\beta}}^2  \geq$\\ $C \frac{\Gamma(\alpha) \beta^{2v}}{\Gamma(\alpha+v)} \|g\|^2_{2,2,v+d/2}$, and the unit ball of RKHS is contained in the Sobolev ball  of radius $\sqrt{\frac{\Gamma(\alpha+v)}{\Gamma(\alpha) \beta^{2v}}}$ (up to a constant) of order $v+d/2$.  By  Theorem 2.7.4 in \cite{wellner2013weak}, the metric entropy of  such a Sobolev ball is bounded by a constant times $(\frac{\Gamma(\alpha+v)}{\Gamma(\alpha) \beta^{2v}})^{\frac{d/2}{v+d/2}}  \varepsilon^{-\frac{d}{v+d/2}}$. By Theorem 1.2 of \cite{li1999approximation},
\begin{equation}\label{small ball of ch invariation}
\begin{split}
    \varphi_0(\varepsilon) &\lesssim \varepsilon ^{-\frac{2 \frac{d}{v+d/2}}{2-\frac{d}{v+d/2}}}\left[\left(\frac{\Gamma(\alpha+v)}{\Gamma(\alpha) \beta^{2v}} \right)^{\frac{d/2}{v+d/2}} \right]^{\frac{2v+d}{2v}}=\varepsilon^{-d/v} \left(\frac{\Gamma(\alpha+v)}{\Gamma(\alpha) \beta^{2v}}\right)^{d/(2v)}.
    \end{split}
\end{equation}
From the proof of  Proposition 3.1 of \citet{li1999approximation}, this bound holds for all $\varepsilon >0$ satisfying,
\begin{displaymath}
    \left[\frac{\Gamma(\alpha) \beta^{2v}}{\Gamma(\alpha+v)} \right]^{\frac{d/2}{v+d/2}} \lesssim (\varphi_0(\varepsilon/2))^{\frac{d}{2(v+d/2)}} \varepsilon^{-\frac{d}{v+d/2}}.
\end{displaymath}
Similar to the proof in Theorem \ref{lower bound of rescaled matern}, by Lemma \ref{CH spectral density tail behavior},  for rescaled CH class,  when $\|\lambda\| \ge (\alpha+v-1)\beta^{-1}$, its spectral density satisfies:
$$
m^{\alpha,\beta}_{CH}(\lambda) \gtrsim \frac{\Gamma(\alpha +v)}{\Gamma(\alpha )\beta^{2v}} \|\lambda\|^{2v+d},
$$
and  we have:
$$
P\left(\sup_{t \in \mathcal{T}} |W_t^{\alpha,\beta}| \le \varepsilon/2\right) \le \exp\left(-C \left(\frac{\Gamma(\alpha+v)}{\Gamma(\alpha) \beta^{2v}}\right)^{d/(2v)}\varepsilon^{-d/v}\right),
$$
where $\varepsilon$ and $C$ only depend on $v$ and $d$.  When $\frac{\Gamma(\alpha+v)}{\Gamma(\alpha) \beta^{2v}} >1$, we have,
$$
\varphi_0(\varepsilon/2) = - \log P\left(\sup_{t \in \mathcal{T}} |W_t^{\alpha,\beta}| \le \varepsilon/2\right)  \ge C \left(\frac{\Gamma(\alpha+v)}{\Gamma(\alpha) \beta^{2v}}\right)^{\frac{d}{2v}}\varepsilon^{-d/v} \ge C \varepsilon^{-d/v}.
$$
Since the right hand side is independent of $\alpha, \beta$, it follows that (\ref{small ball of ch invariation}) holds for all $\varepsilon$ in an interval independent
of $\alpha, \beta$. 
\end{proof}

\subsection{Proof of Lemma \ref{CH decentering}}
\begin{proof}
Here,  we use the exact construction in the proof of Lemma \ref{matern decentering}, but let $h=\zeta_{\beta^{\theta}}* w_0$, $\theta>1$.
By Lemma \ref{CH spectral density tail behavior}, when
$\theta > \frac{v+d/2}{v+d/2-\eta}$, $\beta^{-1} \gtrsim \ln n$ and $\alpha \lesssim \sqrt{\ln \ln n}$,  we have,
\begin{equation}
\begin{split}
\|h\|_\mathbb{H^{\alpha,\beta}}^2 &= \left\| 2 \pi \mathcal{F}_{(\alpha,\beta)}\left( \frac{\hat{\zeta}(\beta^{\theta} \lambda)\hat{w_0}(\lambda)}{m_{CH}^{\alpha,\beta}(\lambda)}\right)\right\|_\mathbb{H^{\alpha,\beta}}^2 \\
& \leq  \tilde{D}_{w_0} \cdot \int|\hat{\varphi}(\beta^{\theta} \lambda)\hat{w_0}(\lambda)|^2 \frac{1}{{m_{CH}^{\alpha,\beta}}(\lambda)} d\lambda \\ 
&\leq  \tilde{D}_{w_0} \cdot  \sup_\lambda[(1+{\|\lambda\|^2})^{-\eta} ({m_{CH}^{\alpha,\beta}})^{-1}(\lambda) |\hat{\varphi}(\beta^{\theta} \lambda)|^2] \times \|w_0\|^2_{2,2,\eta}\\
& =  \tilde{D}_{w_0} \cdot \sup_{\|\lambda\| \leq M/\beta^{\theta}}[(1+{\|\lambda\|^2})^{-\eta}({m_{CH}^{\alpha,\beta}})^{-1}(\lambda) |\hat{\varphi}(\beta^{\theta}\lambda)|^2] \times \|w_0\|^2_{2,2,\eta}\\
&\leq  {D}_{w_0} \cdot \sup_{\|\lambda\| \leq M/\beta^{\theta}}[(1+{\|\lambda\|^2})^{-\eta}({m_{CH}^{\alpha,\beta}})^{-1}(\lambda) ] \times \|w_0\|^2_{2,2,\eta}\\
&\lesssim  \max \left\{\frac{\Gamma(\alpha)}{\Gamma(\alpha-d/2)\beta^d}, \frac{\Gamma(\alpha) e^\alpha}{\beta^{-2v}\Gamma(\alpha-d/2)\alpha}\left(\frac{\alpha+v-1}{\beta^2}\right)^{v+d/2-\eta},\right.\\
&\left.\qquad(\beta^{2\theta})^{-v-d/2+\eta} \frac{\Gamma(\alpha) \beta^{2v}}{\Gamma(\alpha+v)}\right\}                                         \\
&\leq  (\beta^{2\theta})^{-v-d/2+\eta} \frac{\Gamma(\alpha) \beta^{2v}}{\Gamma(\alpha+v)},
\end{split}
\end{equation}
where $\tilde{D}_{w_0}$ and ${D}_{w_0}$ depend only on $w_0$. 
\end{proof}

\subsection{Proof of Theorem \ref{lower bound of rescaled CH}}
\begin{proof}
For the rescaled CH class,  by Lemma \ref{CH spectral density tail behavior}, when $\|\lambda\| \ge (\alpha+v-1)\beta^{-1}$, its spectral density satisfies:
$$
m^{\alpha,\beta}_{CH}(\lambda) \ge c_0 \beta^{-2v} \|\lambda\|^{-(2v+d)},
$$
where $c_0$ does not depend on $\alpha, \beta$. Then following the same steps in the proof of Theorem \ref{lower bound of rescaled matern}, completes the present proof.
\end{proof}

\subsection{Proof of Theorem \ref{adaptive matern}}
\begin{proof}
We consider a prior on $A=1/\phi$ with Lebesgue density $\tilde{g}_A(\cdot)$   satisfying the condition: 
\begin{equation}
\tilde{C}_1 a^p \exp \left(-\tilde{D}_1 a^{kd} \right) \leq \tilde{g}_A(a) \leq \tilde{C}_2 a^p \exp \left(-\tilde{D}_2 a^{kd}  \right) ,
\end{equation}
for positive constants $\tilde{C}_1, \tilde{D}_1, \tilde{C}
_2, \tilde{D}_2$, non-negative constants $p, k$ and all  sufficiently large $a>0$,  and when $k=1$ this prior  is the same as the prior satisfying  (\ref{gamma prior tail }).
Let $f$ be the pdf of the prior on $\phi$.

Consider the condition in (\ref{adaptive condition1}).  By Proposition 11.19 of \cite{ghosal2017fundamentals}, we have, 
\begin{equation}
    P\left(\left\|W^{\phi}-w_0\right\|_{\infty} \leq 2 \varepsilon\right) \geq e^{-\varphi_{w_0}^{\phi}(\varepsilon)},
\end{equation}
where $\varphi_{w_0}^{\phi}(\varepsilon)$ is the small ball exponent $\varphi_{w_0}(\varepsilon)$ in Lemma \ref{matern small ball}.

By Lemma \ref{matern small ball} we have that $\varphi_0^{\phi}(\varepsilon) \leq C  \varepsilon^{-d/v} \phi^{-d}$ for $\phi<\phi_0<1/2$ and $\varepsilon<\varepsilon_0$, where the constants $\phi_0, \varepsilon_0, C$ depend only on  $w_0$ and $\mu$. By Lemmas \ref{matern small ball} and \ref{matern decentering} (taking $\theta=v/(v-\eta)$ in Lemma \ref{matern decentering}),  for $\phi  <\phi_0, \varepsilon<\varepsilon_0$ and $\varepsilon \asymp \phi^{\frac{\eta v}{v-\eta}}$ (so that $ \phi^{\theta \eta} \lesssim \varepsilon$), we have:
$$
\varphi_{w_0}^\phi(\varepsilon) \leq C_1  \varepsilon^{-d/v} \phi^{-d} +
D \phi^{-\frac{vd}{v-\eta}}
\leq K \varepsilon^{-d/v} \phi^{-d},
$$
for $K$ depending on $\phi_0, \mu$ and $d$ only. Therefore, for $\varepsilon<\varepsilon_0 \wedge C_1 \phi_0^{\frac{v\eta}{v-\eta}}$ (so that $ (\varepsilon/ C_1)^{\frac{v-\eta}{v\eta}} \le \phi_0$), we have:
$$
\begin{aligned}
P\left(\left\|W^A-w_0\right\|_{\infty} \leq 2 \varepsilon\right)&= \int_0^{\infty} P\left(\left\|W^{\phi}-w_0\right\|_{\infty} \leq 2 \varepsilon\right) f({\phi}) d {\phi} \\
& \geq \int_0^{\infty} e^{-\varphi_{w_0}^{\phi}(\varepsilon)} f({\phi}) d {\phi} \\
& \geq \int_{(\varepsilon/(2 C_1))^{\frac{v-\eta}{v\eta}}}^{(  \varepsilon/C_1)^{\frac{v-\eta}{v\eta}}} e^{-K \varepsilon^{-d/v} \phi^{-d}}f({\phi}) d {\phi}  \\
& \geq C_2 e^{-K_2 \varepsilon^{-d/\eta}}  \int_{(\varepsilon/(2 C_1))^{\frac{v-\eta}{v\eta}}}^{(  \varepsilon/C_1)^{\frac{v-\eta}{v\eta}}}f({\phi}) d {\phi}\\
&= C_2 e^{-K_2 \varepsilon^{-d/\eta}}  \int_{(C_1/\varepsilon )^{\frac{v-\eta}{v\eta}}}^{(  2C_1/\varepsilon)^{\frac{v-\eta}{v\eta}}}\tilde{g}_A({a}) d {a}\\
& \ge C_2 e^{-K_2 \varepsilon^{-d/\eta}} (C_1/\varepsilon)^{\frac{(p+1)(v-\eta)}{v\eta}} \exp{(-D_1(C_1/\varepsilon)^{\frac{kd(v-\eta)}{v\eta}})}\\
& \geq C_3 e^{-K_3 \varepsilon^{-d/\eta}},
\end{aligned}
$$
for constant $K_3$ that depends only on $C_1, D, D_1, d, \eta,K$ and the last inequality in the previous display holds because $k \leq \frac{v}{v-\eta}$. Then we have that \\$P\left(\left\|W^A-w_0\right\|_{\infty} \leq \varepsilon_n\right) \geq \exp \left(-n \varepsilon_n^2\right)$ for $\varepsilon_n=C_4 n^{-\eta /(2\eta+d )}$ and sufficiently large $n$.

Next, consider the condition in (\ref{adaptive condition2}). Let
$\mathbb{B}_1$ be the unit ball of $C(\mathcal{T})$ and set
\begin{equation}
B=B_{M, r, \delta, \varepsilon}=\left(M ({\frac{\delta}{r}})^{d/2} \mathbb{H}_1^r+\varepsilon \mathbb{B}_1\right) \cup\left(\bigcup_{{\phi} > \delta}\left(M \mathbb{H}_1^{\phi}\right)+\varepsilon \mathbb{B}_1\right),
\end{equation}
 where positive constants $M, r, \delta, \varepsilon$ are to be determined.

By Lemma \ref{rescaled hilbert inclusion} the set $B$ contains the set $M \mathbb{H}_1^{\phi}+\varepsilon \mathbb{B}_1$ for any ${\phi} \in[r, \delta]$. By the definition of $B$,  for ${\phi}>\delta$  this is true. By Borell's inequality (Proposition 11.17 in \cite{ghosal2017fundamentals}) and the fact that $e^{-\varphi_0^{\phi}(\varepsilon)}=P\left(\sup _{t \in \mathcal{T}/\phi}\left|W_t\right| \leq \varepsilon\right)$ is increasing in ${\phi}$,  one has for any ${\phi} \geq r$,
\begin{equation}\label{borell ineq}
\begin{aligned}
P\left(W^{\phi} \notin B\right) & \leq P\left(W^{\phi} \notin M \mathbb{H}_1^{\phi}+\varepsilon \mathbb{B}_1\right) \leq 1-\Phi\left(\Phi^{-1}\left(e^{-\varphi_0^{\phi}(\varepsilon)}\right)+M\right) \\
& \leq 1-\Phi\left(\Phi^{-1}\left(e^{-\varphi_0^r(\varepsilon)}\right)+M\right).
\end{aligned}
\end{equation}
By Lemma 4.10 of \cite{van2009adaptive}, when 
\begin{equation} \label{M's condition}
M \geq 4 \sqrt{\varphi_0^r(\varepsilon)}, \text { and } e^{-\varphi_0^r(\varepsilon)}<1 / 4 ,
\end{equation}
we note that $e^{-\varphi_0^r(\varepsilon)} \leq e^{-\varphi_0^1(\varepsilon)}$ for $r<1$ and is  smaller than $1 / 4$ if $\varepsilon$ is smaller than some fixed $\varepsilon_1$, so  
$$
M \geq-2 \Phi^{-1}\left(e^{-\varphi_0^r(\varepsilon)}\right).
$$
Then the right-hand side of (\ref{borell ineq}) is bounded by $1-\Phi(M / 2) \leq e^{-M^2 / 8}$. Therefore, by Lemma \ref{matern small ball} the inequalities (\ref{M's condition})  are satisfied if,
\begin{equation}\label{M range}
M^2 \geq 16 C_5 \varepsilon^{-d/v} r^{-d}, \quad r<1, \quad \varepsilon<\varepsilon_1 \wedge \varepsilon_0 .
\end{equation}
Then by Lemma 4.9 in \cite{van2009adaptive}, the following inequality holds if  $M, r, \delta, \varepsilon$ satisfy  (\ref{M range}):
\begin{equation}\label{M,r range}
\begin{aligned}
P\left(W^A \notin B\right) & \leq P(\phi<r)+\int_r^\infty P\left(W^{\phi} \notin B\right) f({\phi}) d {\phi} \\
& \leq \frac{2 C_2 r^{-p+kd-1} e^{-D_2 r^{-kd} }}{D_2 d }+e^{-M^2 / 8}.
\end{aligned}
\end{equation}
By   (\ref{M,r range}), to show the condition (\ref{adaptive condition2}) it suffices to verify the following inequalities:
\begin{equation} \label{M range 2}
\begin{aligned}
D_2 (1/r)^{kd} & \geq 8 n \varepsilon_n^2, \\
(1/r)^{p-kd+1} & \leq e^{4 n \varepsilon_n^2,} \\
M^2 & \geq 32 n \varepsilon_n^2.
\end{aligned}
\end{equation}
The choice:
\begin{equation} \label{solution of M,r}
    \begin{aligned}
r=r_n&=({D_2}/{8})^{1/(kd)}n^{-1/(k(2 \eta+d))},\\
M=M_n&=(32n^{{2d}/(2 \eta+d)})^{1/2},\\
\varepsilon=\varepsilon_n&=n^{-\frac{\eta}{2\eta+d}},\\
\end{aligned}
\end{equation}
 satisfies these inequalities while also satisfying (\ref{M range}) when $k \ge \frac{v}{2v-\eta}$.

Finally, consider the condition in  (\ref{adaptive condition3}). By the proof of Lemma \ref{matern small ball}, for $M (\frac{\delta}{r})^{d/2}>2 \varepsilon$ and $r< \phi_0$, we have:
$$
\begin{aligned}
\log N\left(2 \varepsilon, M (\frac{\delta}{r})^{d/2} \mathbb{H}_1^r+\varepsilon \mathbb{B}_1,\|\cdot\|_{\infty}\right) & \leq \log N\left(\varepsilon, M (\frac{\delta}{r})^{d/2} \mathbb{H}_1^r,\|\cdot\|_{\infty}\right) \\
& \leq C \left( \frac{M}{\varepsilon} (\frac{r}{\delta})^{d/2}r^{-v}  \right)^{\frac{d}{v+d/2}} .
\end{aligned}
$$

By Lemma \ref{uniform distance within hilbert}, every element of $M \mathbb{H}_1^{\phi}$ for $\phi>\delta$ is within uniform distance $ \sqrt{d} \tau M/\delta$ (with $\tau=(\int \|\lambda\|^2 d \mu)^{1/2}$) of a constant function and this constant is contained in the interval $[-M \sqrt{\|\mu\|}, M \sqrt{\|\mu\|}]$. Then for $\varepsilon> \sqrt{d} \tau M/\delta$,
$$
N\left(2 \varepsilon, \bigcup_{\phi>\delta}\left(M \mathbb{H}_1^{\phi}\right)+\varepsilon \mathbb{B}_1,\|\cdot\|_{\infty}\right) \leq N(\varepsilon,[-M \sqrt{\|\mu\|}, M \sqrt{\|\mu\|}],|\cdot|) \leq \frac{2 M \sqrt{\|\mu\|}}{\varepsilon} .
$$

Now, with the choice $\delta=(2 \sqrt{d} \tau M/\varepsilon)^2$, combining the last two displays, and using the inequality $\log (x+y) \leq \log x+2 \log y$ for $x \geq 1, y \geq 2$, we obtain,
\begin{equation}\label{adaptive entropy inequality}
\begin{aligned}
&\log N\left(2 \varepsilon, B,\|\cdot\|_{\infty}\right) \\
&\leq  \log \left[N\left(2 \varepsilon, M (\frac{\delta}{r})^{d/2} \mathbb{H}_1^r+\varepsilon \mathbb{B}_1,\|\cdot\|_{\infty}\right) +  N\left(2 \varepsilon, \bigcup_{\phi>\delta}\left(M \mathbb{H}_1^{\phi}\right)+\varepsilon \mathbb{B}_1,\|\cdot\|_{\infty}\right)\right] \\
& \leq C \left( \frac{M}{\varepsilon} (\frac{r}{\delta})^{d/2}r^{-v}  \right)^{\frac{d}{v+d/2}} 
+ 2\log(\frac{2 M \sqrt{\|\mu\|}}{\varepsilon}) .
\end{aligned}
\end{equation}
This inequality is valid for any $B=B_{M, r, \delta, \varepsilon}$ with $\delta=(2 \sqrt{d} \tau M/\varepsilon)^2$, and any $M, r, \varepsilon$ with:
\begin{equation}
 \quad r < \phi_0 (<1/2), \quad M (\frac{\delta}{r})^{d/2}>2 \varepsilon.
\end{equation}
We find that the solution  in (\ref{solution of M,r}) satisfies these inequalities, and with this solution, if we also have $k\ge \frac{v-d/2}{v-\eta+d\eta+d(d-1)}$ and $v \ge \eta$, the right hand side of (\ref{adaptive entropy inequality}) is bounded by $n \varepsilon_n^2$, which verifies the condition in (\ref{adaptive condition3}).

In sum, if: 
\begin{equation}\label{k condition}
\max\left\{\frac{v}{2v-\eta},\frac{v-d/2}{v-\eta+d\eta+d(d-1)}\right\}\leq k \leq \frac{v}{v-\eta},
\end{equation}
then conditions (\ref{adaptive condition1})--(\ref{adaptive condition3}) are satisfied. Condition (\ref{k condition}) on $k$  can be simplified to $1 \leq k \leq \frac{v}{v-\eta}$ when $v \ge \eta$, and we can take $k=1$ to complete the proof.
\end{proof}

\subsection{Proof of Theorem \ref{adaptive CH}}
\begin{proof}
We consider a prior on $A=1/\beta$ with Lebesgue density $\tilde{g}_A(\cdot)$  satisfying the condition: 
\begin{equation}
\tilde{C}_1 a^p \exp \left(-\tilde{D}_1 a^{kd} \right) \leq \tilde{g}_A(a) \leq \tilde{C}_2 a^p \exp \left(-\tilde{D}_2 a^{kd}  \right) ,
\end{equation}
for positive constants $\tilde{C}_1, \tilde{D}_1, \tilde{C}
_2, \tilde{D}_2$, non-negative constants $p, k$ and all  sufficiently large $a>0$,  and when $k=1$ this prior  is the same as the prior satisfying  (\ref{gamma prior tail }).
Let $f$ be the pdf of the prior on $\beta$.

Consider the condition in (\ref{adaptive condition1}).  By Proposition 11.19 of \cite{ghosal2017fundamentals}, we have, 
\begin{equation}
    P\left(\left\|W^{\alpha,\beta}-w_0\right\|_{\infty} \leq 2 \varepsilon\right) \geq e^{-\varphi_{w_0}^{\alpha,\beta}(\varepsilon)},
\end{equation}
where $\varphi_{w_0}^{\alpha,\beta}(\varepsilon)$ is the small ball exponent $\varphi_{w_0}(\varepsilon)$ in Lemma \ref{CH small ball}.

By Lemma \ref{CH small ball}, when $\alpha>d/2+1$, we have that $\varphi_0^{\alpha,\beta}(\varepsilon) \leq C  \varepsilon^{-d/v} \beta^{-d}$ for $\beta<\beta_0<1/2$ and $\varepsilon<\varepsilon_0$, where the constants $\beta_0, \varepsilon_0, C$ depend only on  $w_0$ and $\mu$. By Lemmas \ref{CH small ball} and \ref{CH decentering} (with $\theta=v/(v-\eta)$ in Lemma \ref{CH decentering}),  for $\beta  <\beta_0, \varepsilon<\varepsilon_0$ and $\varepsilon \asymp \beta^{\frac{\eta v}{v-\eta}}$ (so that $ \beta^{\theta \eta} \lesssim \varepsilon$), we have:
$$
\varphi_{w_0}^{\alpha,\beta}(\varepsilon) \leq C_1  \varepsilon^{-d/v} \beta^{-d} +
D \beta^{-\frac{vd}{v-\eta}}
\leq K \varepsilon^{-d/v} \beta^{-d},
$$
for $K$ depending on $\beta_0, \mu$ and $d$ only. Therefore, for $\varepsilon<\varepsilon_0 \wedge C_1 \beta_0^{\frac{v\eta}{v-\eta}}$(so that $ (\varepsilon/ C_1)^{\frac{v-\eta}{v\eta}} \le \beta_0$), 
$$
\begin{aligned}
P\left(\left\|W^A-w_0\right\|_{\infty} \leq 2 \varepsilon\right)&= \int_0^{\infty} P\left(\left\|W^{\alpha,\beta}-w_0\right\|_{\infty} \leq 2 \varepsilon\right) f({\beta}) d {\beta} \\
& \geq \int_0^{\infty} e^{-\varphi_{w_0}^{\alpha,\beta}(\varepsilon)} f({\beta}) d {\beta} \\
& \geq \int_{(\varepsilon/(2 C_1))^{\frac{v-\eta}{v\eta}}}^{(  \varepsilon/C_1)^{\frac{v-\eta}{v\eta}}} e^{-K \varepsilon^{-d/v} \beta^{-d}}f({\beta}) d {\beta}  \\
& \geq C_2 e^{-K_2 \varepsilon^{-d/\eta}}  \int_{(\varepsilon/(2 C_1))^{\frac{v-\eta}{v\eta}}}^{(  \varepsilon/C_1)^{\frac{v-\eta}{v\eta}}}f({\beta}) d {\beta}\\
&= C_2 e^{-K_2 \varepsilon^{-d/\eta}}  \int_{(C_1/\varepsilon )^{\frac{v-\eta}{v\eta}}}^{(  2C_1/\varepsilon)^{\frac{v-\eta}{v\eta}}}\tilde{g}_A({a}) d {a}\\
& \ge C_2 e^{-K_2 \varepsilon^{-d/\eta}} (C_1/\varepsilon)^{\frac{(p+1)(v-\eta)}{v\eta}} \exp{(-D_1(C_1/\varepsilon)^{\frac{kd(v-\eta)}{v\eta}})}\\
& \geq C_3 e^{-K_3 \varepsilon^{-d/\eta}},
\end{aligned}
$$
for constant $K_3$ that depends only on $C_1, D, D_1, d, \eta,K$ and the last inequality in the previous display holds for $k \leq \frac{v}{v-\eta}$. Then we have that \\$P\left(\left\|W^A-w_0\right\|_{\infty} \leq \varepsilon_n\right) \geq \exp \left(-n \varepsilon_n^2\right)$ for $\varepsilon_n=C_4 n^{-\eta /(2\eta+d )}$ and sufficiently large $n$.

Next, consider the condition in (\ref{adaptive condition2}). Let
$\mathbb{B}_1$ be the unit ball of $C(\mathcal{T})$ and set
\begin{equation}
B=B_{M, r, \delta, \varepsilon}=\left(M ({\frac{\delta}{r}})^{d/2} \mathbb{H}_1^{\alpha,r}+\varepsilon \mathbb{B}_1\right) \cup\left(\bigcup_{{\beta} > \delta}\left(M \mathbb{H}_1^{\alpha,\beta}\right)+\varepsilon \mathbb{B}_1\right),
\end{equation}
 where positive constants $M, r, \delta, \varepsilon$ are to be determined.

By Lemma \ref{rescaled hilbert inclusion} the set $B$ contains the set $M \mathbb{H}_1^{\alpha,\beta}+\varepsilon \mathbb{B}_1$ for any ${\beta} \in[r, \delta]$. By the definition of $B$,  for ${\beta}>\delta$  this is true. By Borell's inequality (Proposition 11.17 in \cite{ghosal2017fundamentals}) and the fact that $e^{-\varphi_0^{\alpha,\beta}(\varepsilon)}=P\left(\sup _{t \in \mathcal{T}/\beta}\left|W^{\alpha,1}_t\right| \leq \varepsilon\right)$ is increasing in ${\beta}$,  one has for any ${\beta} \geq r$,
\begin{equation}\label{CH borell ineq}
\begin{aligned}
P\left(W^{\alpha,\beta} \notin B\right) & \leq P\left(W^{\alpha,\beta} \notin M \mathbb{H}_1^{\alpha,\beta}+\varepsilon \mathbb{B}_1\right) \leq 1-\Phi\left(\Phi^{-1}\left(e^{-\varphi_0^{\alpha,\beta}(\varepsilon)}\right)+M\right) \\
& \leq 1-\Phi\left(\Phi^{-1}\left(e^{-\varphi_0^{\alpha,r}(\varepsilon)}\right)+M\right).
\end{aligned}
\end{equation}
By Lemma 4.10 of \cite{van2009adaptive}, when 
\begin{equation} \label{CH M's condition}
M \geq 4 \sqrt{\varphi_0^{\alpha,r}(\varepsilon)}, \text { and } e^{-\varphi_0^{\alpha,r}(\varepsilon)}<1 / 4 ,
\end{equation}
we note that $e^{-\varphi_0^{\alpha,r}(\varepsilon)} \leq e^{-\varphi_0^{\alpha,1}(\varepsilon)}$ for $r<1$ and is  smaller than $1 / 4$ if $\varepsilon$ is smaller than some fixed $\varepsilon_1$, so  
$$
M \geq-2 \Phi^{-1}\left(e^{-\varphi_0^{\alpha,r}(\varepsilon)}\right).
$$
Then the right-hand side of (\ref{CH borell ineq}) is bounded by $1-\Phi(M / 2) \leq e^{-M^2 / 8}$. Therefore, by Lemma \ref{CH small ball} the inequalities (\ref{CH M's condition})  are satisfied if,
\begin{equation}\label{CH M range}
M^2 \geq 16 C_5 \varepsilon^{-d/v} r^{-d}, \quad r<1, \quad \varepsilon<\varepsilon_1 \wedge \varepsilon_0 .
\end{equation}
Then by Lemma 4.9 in \cite{van2009adaptive}, the following inequality holds if  $M, r, \delta, \varepsilon$ satisfy  (\ref{CH M range})
\begin{equation}\label{CH M,r range}
\begin{aligned}
P\left(W^A \notin B\right) & \leq P(\beta<r)+\int_r^\infty P\left(W^{\alpha,\beta} \notin B\right) f({\beta}) d {\beta} \\
& \leq \frac{2 C_2 r^{-p+kd-1} e^{-D_2 r^{-kd} }}{D_2 d }+e^{-M^2 / 8}.
\end{aligned}
\end{equation}
By   (\ref{CH M,r range}), to show the condition (\ref{adaptive condition2}) it suffices to verify the following inequalities:
\begin{equation} \label{CH M range 2}
\begin{aligned}
D_2 (1/r)^{kd} & \geq 8 n \varepsilon_n^2, \\
(1/r)^{p-kd+1} & \leq e^{4 n \varepsilon_n^2,} \\
M^2 & \geq 32 n \varepsilon_n^2.
\end{aligned}
\end{equation}
The choice:
\begin{equation} \label{CH solution of M,r}
    \begin{aligned}
r=r_n&=({D_2}/{8})^{1/(kd)}n^{-1/(k(2 \eta+d))}\\
M=M_n&=(32n^{{2d}/(2 \eta+d)})^{1/2}\\
\varepsilon=\varepsilon_n&=n^{-\frac{\eta}{2\eta+d}}\\
\end{aligned}
\end{equation}
 satisfies these inequalities while also satisfying (\ref{CH M range 2}) when $k \ge \frac{v}{2v-\eta}$.

Finally, consider the condition in  (\ref{adaptive condition3}). By the proof of Lemma \ref{CH small ball}, for $M (\frac{\delta}{r})^{d/2}>2 \varepsilon$ and $r< \beta_0$,
$$
\begin{aligned}
\log N\left(2 \varepsilon, M (\frac{\delta}{r})^{d/2} \mathbb{H}_1^{\alpha,r}+\varepsilon \mathbb{B}_1,\|\cdot\|_{\infty}\right) & \leq \log N\left(\varepsilon, M (\frac{\delta}{r})^{d/2} \mathbb{H}_1^{\alpha,r},\|\cdot\|_{\infty}\right) \\
& \leq C \left( \frac{M}{\varepsilon} (\frac{\delta}{r})^{d/2}r^{-v}  \right)^{\frac{d}{v+d/2}} .
\end{aligned}
$$

By Lemma \ref{uniform distance within hilbert}, every element of $M \mathbb{H}_1^{\alpha,r}$ for $\beta>\delta$ is within uniform distance $ \sqrt{d} \tau M/\delta$ (let $\tau=(\int \|\lambda\|^2 d \mu)^{1/2}$) of a constant function and this constant is contained in the interval $[-M \sqrt{\|\mu\|}, M \sqrt{\|\mu\|}]$. Then for $\varepsilon> \sqrt{d} \tau M/\delta$,
$$
N\left(2 \varepsilon, \bigcup_{\beta>\delta}\left(M \mathbb{H}_1^{\alpha,\beta}\right)+\varepsilon \mathbb{B}_1,\|\cdot\|_{\infty}\right) \leq N(\varepsilon,[-M \sqrt{\|\mu\|}, M \sqrt{\|\mu\|}],|\cdot|) \leq \frac{2 M \sqrt{\|\mu\|}}{\varepsilon} .
$$

Now, with the choice $\delta=(2 \sqrt{d} \tau M/\varepsilon)^2$, combining the last two displays, and using the inequality $\log (x+y) \leq \log x+2 \log y$ for $x \geq 1, y \geq 2$, we obtain,
\begin{equation}\label{CH adaptive entropy inequality}
\begin{aligned}
&\log N\left(2 \varepsilon, B,\|\cdot\|_{\infty}\right) \\
&\leq  \log \left[N\left(2 \varepsilon, M (\frac{\delta}{r})^{d/2} \mathbb{H}_1^{\alpha,r}+\varepsilon \mathbb{B}_1,\|\cdot\|_{\infty}\right) +  N\left(2 \varepsilon, \bigcup_{\beta>\delta}\left(M \mathbb{H}_1^{\alpha,\beta}\right)+\varepsilon \mathbb{B}_1,\|\cdot\|_{\infty}\right)\right] \\
& \leq C \left( \frac{M}{\varepsilon} (\frac{r}{\delta})^{d/2}r^{-v}  \right)^{\frac{d}{v+d/2}} 
+ 2\log(\frac{2 M \sqrt{\|\mu\|}}{\varepsilon}) .
\end{aligned}
\end{equation}
This inequality is valid for any $B=B_{M, r, \delta, \varepsilon}$ with $\delta=(2 \sqrt{d} \tau M/\varepsilon)^2$, and any $M, r, \varepsilon$ with:
\begin{equation}
 \quad r < \beta_0 (<1/2), \quad M (\frac{\delta}{r})^{d/2}>2 \varepsilon.
\end{equation}
We find that the solution  in (\ref{CH solution of M,r}) satisfies these inequalities, and with this solution, if we also have $k\ge \frac{v-d/2}{v-\eta+d\eta+d(d-1)}$ and $v \ge \eta$, the right hand side of (\ref{CH adaptive entropy inequality}) is bounded by $n \varepsilon_n^2$, which verifies the condition in (\ref{adaptive condition3}).

In sum, if 
\begin{equation}\label{CH k condition}
\max\left\{\frac{v}{2v-\eta},\frac{v-d/2}{v-\eta+d\eta+d(d-1)}\right\}\leq k \leq \frac{v}{v-\eta},
\end{equation}
then conditions (\ref{adaptive condition1})--(\ref{adaptive condition3}) are satisfied. Condition (\ref{CH k condition}) on $k$  can be simplified to $1 \leq k \leq \frac{v}{v-\eta}$ when $v \ge \eta$, and taking $k=1$, we complete the proof.
\end{proof}

\subsection{Proof of Theorem \ref{rescale anisotropic theorem}}
\begin{proof}
There exists constant $C>0$  such that 
 $$ 1/C \cdot m_M^{{\lambda_{\max}}}(\bm{\lambda})  \leq m_M^{\bm{B}}(\bm{\lambda}) \leq C m_M^{{\lambda_{\max}}}(\bm{\lambda}), $$
 and,
 $$
 1/C \cdot m^{\alpha,\lambda_{\max}}_{CH}(\bm{\lambda})  \leq m^{\alpha,\bm{B}}_{CH}(\bm{\lambda}) \leq C m^{\alpha,\lambda_{\max}}_{CH}(\bm{\lambda}).
 $$
Then the proof of this theorem follows similarly to the proofs of  Theorem \ref{rescale matern theorem} and Theorem \ref{rescale CH theorem}.
\end{proof}

\section{Ancillary Results} \label{Ancillary}
First,  we recapture some useful results for the CH covariance, as introduced in \cite{ma2022beyond}.

\begin{enumerate}

\item The CH covariance function can be  obtained as a mixture of the Mat\'ern class over its lengthscale parameter $\phi$ as: 
$$C(h;v,\alpha,\beta,\sigma^2) := \int_0^{\infty} M(h;v,\phi,\sigma^2) \pi(\phi^2;\alpha,\beta) d \phi^2,$$
where $\phi^2 \sim IG(\alpha,\beta)$,
is given an inverse gamma mixing density. \cite{ma2022beyond} prove that this is a valid covariance function on $\mathbb{R}^d$ for all positive integers $d$, where the Mat\'ern and CH covariance functions are as defined in Equations~\eqref{eq:Matern}--\eqref{eq:CH}.

\item The spectral density $m^{\alpha,\beta}_{CH}(\lambda)$ of the CH covariance function is given by \citet{ma2022beyond} as:
\begin{displaymath}
\begin{split}
m^{\alpha,\beta}_{CH}(\lambda) &=  \frac{\sigma^2 2^{v-\alpha} v^v \beta^{2\alpha}}{\pi^{d/2}\Gamma(\alpha)}\int_0^{\infty} (2v\phi^{-2}+\lambda^2)^{-v-\frac{d}{2}}  \phi^{-2(v+\alpha+1)} \exp{(-\frac{\beta^2}{2\phi^2})d\phi^2}. \\
\end{split}
\end{displaymath}
We also note the spectral density $m^{\phi}_M(\lambda)$ of the Mat\'ern covariance function is \citep{stein1999interpolation}:
$$
m^{\phi}_M(\lambda) =\frac{\sigma^2(\sqrt{2 v} / \phi)^{2 v}}{\pi^{d / 2}\left((\sqrt{2 v} / \phi)^2+\lambda^2\right)^{v+d/2}},
$$
where we suppress the dependence on $v$ and $\sigma^2$ on the left hand sides.
\end{enumerate}
\noindent
\\
\\
Posterior contraction rate of stationary Gaussian processes is partly determined by the tail behavior of its spectral density. In the rest of this appendix, we establish some ancillary results and some useful properties of  the spectral density of the CH covariance function, needed in the proofs of the main theorems.

Let $\Gamma(x),\; x\in \mathbb{R}^{+}$ denote the gamma function for a positive real-valued argument. The lower and upper incomplete gamma functions are defined respectively as:
$$
\gamma(a, x)=\int_0^x e^{-t} t^{a-1} dt; \qquad  \Gamma(a, x)=\int_x^{\infty} e^{-t} t^{a-1} dt, \qquad a>0.
$$
A useful inequality \citep{alzer1997some,gautschi1998incomplete} for the incomplete gamma function is:
\begin{equation}  \label{incomplete gamma inequality}
\quad\left(1-e^{-s_a x}\right)^a<\frac{\gamma(a, x)}{\Gamma(a)}<\left(1-e^{-r_a x}\right)^a, \quad 0 \leq x<\infty, \quad a>0, \quad a \neq 1,
\end{equation}
where,
$$
r_a=\left\{\begin{array}{ll}
{[\Gamma(1+a)]^{-1 / a}} & \text { if } 0<a<1, \\
1 & \text { if } a>1,
\end{array} \quad s_a= \begin{cases}1 & \text { if } 0<a<1, \\
{[\Gamma(1+a)]^{-1 / a}} & \text { if } a>1.\end{cases}\right.
$$
\begin{lemma}  \label{incomplete asymp}
We have,
\begin{equation}
\lim_{x\to\infty}\frac{\gamma(x+1, x)}{\Gamma(x+1)} = 1/2. \label{incomplete gamma go to 1}
\end{equation}
\end{lemma}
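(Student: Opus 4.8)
The plan is to read the ratio probabilistically and then invoke a central limit theorem. If $Z_x$ denotes a gamma random variable with shape parameter $x+1$ and unit rate, its density on $(0,\infty)$ is $t^{x}e^{-t}/\Gamma(x+1)$, so that
\[
\frac{\gamma(x+1,x)}{\Gamma(x+1)}=\frac{1}{\Gamma(x+1)}\int_0^x t^{x}e^{-t}\,dt=P(Z_x\le x).
\]
Since $E[Z_x]=x+1$ and $\mathrm{Var}(Z_x)=x+1$, I would standardize and write this as $P\big((Z_x-(x+1))/\sqrt{x+1}\le -1/\sqrt{x+1}\big)$, which already makes the value $1/2$ plausible: the threshold $-1/\sqrt{x+1}$ tends to $0$, the center of the limiting normal law.

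Next I would establish a central limit theorem for $W_x:=(Z_x-(x+1))/\sqrt{x+1}$ as the \emph{continuous} parameter $x\to\infty$. Its characteristic function is $E e^{itW_x}=e^{-it\sqrt{x+1}}\,(1-it/\sqrt{x+1})^{-(x+1)}$; taking logarithms and expanding $\log(1-z)=-z-z^2/2+O(z^3)$ with $z=it/\sqrt{x+1}$ yields $\log E e^{itW_x}=-t^2/2+O\big((x+1)^{-1/2}\big)$, so $W_x$ converges in distribution to the standard normal. Along integer values this is just the ordinary Lindeberg--L\'evy CLT, since $Z_n$ is a sum of $n+1$ i.i.d.\ exponentials, but the characteristic-function computation covers all real $x$ at once.

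Finally, I would combine these facts. Fix $\varepsilon>0$. For all $x$ large enough that $1/\sqrt{x+1}<\varepsilon$ one has $P(W_x\le-\varepsilon)\le P\big(W_x\le -1/\sqrt{x+1}\big)\le P(W_x\le\varepsilon)$, and since $\pm\varepsilon$ are continuity points of the standard normal CDF $\Phi$, letting $x\to\infty$ and then $\varepsilon\downarrow 0$ squeezes $P\big(W_x\le -1/\sqrt{x+1}\big)\to\Phi(0)=1/2$, which is the claimed limit.

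The only delicate point — a mild one — is ensuring the limit runs over the continuous parameter $x$ rather than merely over integers, which the characteristic-function argument handles directly. An alternative, purely analytic route would be a Laplace-type expansion of $\int_0^x t^{x}e^{-t}\,dt$ about the maximizer $t=x$ of the integrand (substituting $t=x+u\sqrt{x}$ and using $x\log t-t\approx x\log x-x-u^2/2$), combined with Stirling's formula $\Gamma(x+1)\sim\sqrt{2\pi x}\,x^{x}e^{-x}$; the factor $1/2$ then emerges as the ratio $\big(\int_{-\infty}^0 e^{-u^2/2}\,du\big)\big/\big(\int_{-\infty}^{\infty}e^{-u^2/2}\,du\big)$.
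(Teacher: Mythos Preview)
Your proof is correct. The probabilistic reading $\gamma(x+1,x)/\Gamma(x+1)=P(Z_x\le x)$ followed by a CLT for the standardized variable $W_x$ is a perfectly legitimate route, and your characteristic-function computation legitimately covers the continuous parameter $x$ rather than just integers; the squeezing argument at the end is the standard way to deal with the moving threshold $-1/\sqrt{x+1}$.

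The paper takes exactly the analytic route you sketch in your final paragraph: it works with the complementary integral $\Gamma(x+1,x)=\int_x^\infty t^x e^{-t}\,dt$, substitutes $t=x+u\sqrt{x}$, shows $(1+u/\sqrt{x})^x e^{-\sqrt{x}u}\to e^{-u^2/2}$ with an explicit integrable dominating function $e^{-u^2/(2(u+1))}$ (from $\log(1+y)\le y-y^2/(2(1+y))$), applies dominated convergence to get the integral $\to\sqrt{\pi/2}$, and then divides by Stirling's formula $\Gamma(x+1)\sim\sqrt{2\pi}\,x^{x+1/2}e^{-x}$. Your probabilistic argument is arguably more conceptual---it makes the limit $1/2$ predictable from the outset as $\Phi(0)$---and avoids having to produce a dominating function by hand, at the cost of invoking L\'evy's continuity theorem. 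The paper's argument is more self-contained analytically and makes the asymptotics fully explicit, which could be useful if one later wanted a rate or a second-order term.
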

\begin{proof}[Proof of Lemma \ref{incomplete asymp}]
(At the time of writing, a sketch of the proof is available at: \href{https://math.stackexchange.com/questions/3751528/limits-of-the-incomplete-gamma-function}{math.stackexchange.com}, which we reproduce below, unable to locate a persistent citable academic item.)  Let $t=x+u \sqrt{x}$. Then,
\begin{align}
\Gamma(x+1, x) &=\int_x^{\infty} t^x e^{-t} \mathrm{~d} t 
=x^{x+\frac{1}{2}} e^{-x} \int_0^{\infty}\left(1+\frac{u}{\sqrt{x}}\right)^x e^{-\sqrt{x} u} \mathrm{~d} u.\label{eq:gam1}
\end{align}
Next, note that:
$$
\lim _{x \rightarrow \infty}\left(1+\frac{u}{\sqrt{x}}\right)^x e^{-\sqrt{x} u}=e^{-\frac{u^2}{2}}.
$$
Applying  the  inequality $\log (1+x) \leq x-\frac{x^2}{2(x+1)}$ for $x \geq 0$ shows that, 
$$
\left(1+\frac{u}{\sqrt{x}}\right)^x e^{-\sqrt{x} u} \leq e^{-\frac{u^2}{2(u+1)}},
$$
for all $x \geq 1$ and $u \geq 0$. Since this bound is integrable on $[0, \infty)$,  by the dominated convergence theorem, 
\begin{align}
\lim _{x \rightarrow \infty} \int_0^{\infty}\left(1+\frac{u}{\sqrt{x}}\right)^x e^{-\sqrt{x} u} \mathrm{~d} u &=\int_0^{\infty} e^{-\frac{u^2}{2}} \mathrm{~d} u=\sqrt{\frac{\pi}{2}}.\label{eq:gam2}
\end{align}
An application of Stirling's formula yields:
\begin{align}
\Gamma(x+1) &\sim \sqrt{2 \pi} x^{x+\frac{1}{2}} e^{-x}, \quad \text { as } \quad x \rightarrow \infty.\label{eq:gam3}
\end{align}
Combining \eqref{eq:gam1}, \eqref{eq:gam2} and \eqref{eq:gam3}, we obtain,
$$
\lim _{x \rightarrow \infty} \frac{\Gamma(x+1, x)}{\Gamma(x+1)}=\frac{1}{2}.
$$
Noting that $\gamma(x+1,x) + \Gamma(x+1,x)=\Gamma(x+1)$ completes the proof. 
\end{proof}

\begin{lemma}
Let $\{a_n\}, \{b_n\}>0$ be sequences such that  $a_n=O(1)$. Then, we have, as $n\to\infty$,
\begin{displaymath}
    \int_{0}^{a_n} x^{b_n-1} \exp(-x) dx \asymp a_n^{b_n}/b_n.
\end{displaymath}
\label{incomplete integrate}
\end{lemma}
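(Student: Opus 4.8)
The plan is to sandwich the exponential factor $e^{-x}$ between two constants on the interval of integration $[0,a_n]$ and integrate the remaining pure power exactly. For the upper bound I would simply use $e^{-x}\le 1$ for $x\ge 0$, which gives
\[
\int_{0}^{a_n} x^{b_n-1}e^{-x}\,dx \;\le\; \int_{0}^{a_n} x^{b_n-1}\,dx \;=\; \frac{a_n^{b_n}}{b_n},
\]
where the last equality uses $b_n>0$ so that the integral converges; hence $\int_{0}^{a_n} x^{b_n-1}e^{-x}\,dx \lesssim a_n^{b_n}/b_n$ with implicit constant $1$.

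For the lower bound the hypothesis $a_n=O(1)$ is what does the work: choose $M>0$ with $a_n\le M$ for all sufficiently large $n$, so that $e^{-x}\ge e^{-a_n}\ge e^{-M}$ whenever $x\in[0,a_n]$. Then
\[
\int_{0}^{a_n} x^{b_n-1}e^{-x}\,dx \;\ge\; e^{-M}\int_{0}^{a_n} x^{b_n-1}\,dx \;=\; e^{-M}\,\frac{a_n^{b_n}}{b_n},
\]
i.e.\ $\int_{0}^{a_n} x^{b_n-1}e^{-x}\,dx \gtrsim a_n^{b_n}/b_n$. Combining the two displays yields $e^{-M}\,a_n^{b_n}/b_n \le \int_{0}^{a_n} x^{b_n-1}e^{-x}\,dx \le a_n^{b_n}/b_n$ for all large $n$, which is exactly the claimed $\asymp$.

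There is no substantive obstacle here; the only point that needs attention is that the constant in the lower bound be uniform in $n$, which is precisely what the boundedness assumption $a_n=O(1)$ supplies. I would also remark that this hypothesis cannot be dropped: if $a_n\to\infty$ with $b_n$ held fixed, the factor $e^{-a_n}$ degenerates and the integral converges to $\Gamma(b_n)$ rather than remaining comparable to $a_n^{b_n}/b_n$, so the two-sided estimate fails.
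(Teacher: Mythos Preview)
Your proof is correct and follows essentially the same approach as the paper: bound $e^{-x}\le 1$ for the upper estimate and $e^{-x}\ge e^{-a_n}$ for the lower estimate, then use $a_n=O(1)$ to make the lower constant uniform in $n$. Your version is in fact slightly more explicit about the uniformity via the choice of $M$, and your closing remark on the necessity of the hypothesis is a nice addition not present in the paper.
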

\begin{proof}[Proof of Lemma \ref{incomplete integrate}]
For the upper bound, we have,
\begin{displaymath}
    \int_{0}^{a_n} x^{b_n-1} \exp(-x) dx \leq  \int_{0}^{a_n} x^{b_n-1} dx =a_n^{b_n}/b_n .
\end{displaymath}
For the lower bound, 
\begin{displaymath}
    \int_{0}^{a_n} x^{b_n-1} \exp(-x) dx  \ge \exp(-a_n)\int_{0}^{a_n} x^{b_n-1} dx = \exp(-a_n)\cdot{a_n}^{b_n} /{b_n} \gtrsim {a_n}^{b_n} /{b_n}. 
\end{displaymath}
\end{proof} 

The next lemma obtains the upper and lower bounds for the spectral density of the CH class.
\begin{lemma} \label{inequality about ch spectral density}
If $\alpha>d/2+1$ and $\beta^2=O(1)$  as $n \to \infty$, then,
$$
\frac{\Gamma(\alpha-d/2)}{\Gamma(\alpha)}\beta^{d} \lesssim (1+\lambda^2)^{(v+d/2)}m^{\alpha,\beta}_{CH}(\lambda) \lesssim \frac{\Gamma(\alpha +v)}{\Gamma(\alpha )\beta^{2v}}.
$$
\end{lemma}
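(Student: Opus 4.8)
The plan is to work from the integral representation of $m^{\alpha,\beta}_{CH}$ recorded in Appendix~\ref{Ancillary}. First I substitute $u=\phi^{-2}$ (so that $d\phi^2=-u^{-2}\,du$) to obtain the cleaner form
\[
m^{\alpha,\beta}_{CH}(\lambda)=\frac{\sigma^2 2^{v-\alpha}v^v\beta^{2\alpha}}{\pi^{d/2}\Gamma(\alpha)}\int_0^\infty (2vu+\lambda^2)^{-v-d/2}\,u^{v+\alpha-1}e^{-\beta^2u/2}\,du,
\]
and then multiply by $(1+\lambda^2)^{v+d/2}$, which confines all the $\lambda$-dependence to the single factor $R_u(\lambda):=\bigl((1+\lambda^2)/(2vu+\lambda^2)\bigr)^{v+d/2}$ inside the integral. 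Differentiating $s\mapsto (1+s)/(2vu+s)$ in $s=\lambda^2$ shows it is monotone and always lies between $\min\{1,(2vu)^{-1}\}$ and $\max\{1,(2vu)^{-1}\}$; since $v+d/2>0$ this yields the uniform (in $\lambda$) two-sided bound $\min\{1,(2vu)^{-v-d/2}\}\le R_u(\lambda)\le \max\{1,(2vu)^{-v-d/2}\}$. This is the step that decouples $\lambda$ from the $u$-integral, after which no further $\lambda$-dependence survives.

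For the upper bound I replace $R_u(\lambda)$ by $\max\{1,(2vu)^{-v-d/2}\}$ and split the integral at $u=1/(2v)$. The part over $u\ge 1/(2v)$ is at most $\int_0^\infty u^{v+\alpha-1}e^{-\beta^2u/2}\,du=\Gamma(v+\alpha)(2/\beta^2)^{v+\alpha}$, which after multiplication by the prefactor is exactly a fixed multiple of $\Gamma(\alpha+v)/(\Gamma(\alpha)\beta^{2v})$. The part over $u<1/(2v)$ equals $(2v)^{-v-d/2}\int_0^{1/(2v)}u^{\alpha-d/2-1}e^{-\beta^2u/2}\,du$, which is finite because $\alpha>d/2+1$; by Lemma~\ref{incomplete integrate} (with $a_n=\beta^2/(4v)=O(1)$ and $b_n=\alpha-d/2$) it is of order $(2v)^{-\alpha-v}/(\alpha-d/2)$, so after the prefactor it contributes a term of order $(\beta^2/(4v))^\alpha/(\Gamma(\alpha)(\alpha-d/2))$. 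Since $\beta^2=O(1)$ this numerator grows at most geometrically in $\alpha$, whereas $\Gamma(\alpha+v)$ grows faster, so this term is dominated by $\Gamma(\alpha+v)/(\Gamma(\alpha)\beta^{2v})$, giving the claimed upper bound.

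For the lower bound I instead replace $R_u(\lambda)$ by $\min\{1,(2vu)^{-v-d/2}\}$ and keep only the range $u>1/(2v)$, on which this minimum equals $(2vu)^{-v-d/2}$; after the change of variables $x=\beta^2u/2$ this leaves, up to the prefactor, $(2v)^{-v-d/2}(2/\beta^2)^{\alpha-d/2}\Gamma(\alpha-d/2,\beta^2/(4v))$. It then remains to see that $\Gamma(\alpha-d/2,\beta^2/(4v))\gtrsim \Gamma(\alpha-d/2)$: because $\alpha-d/2>1$, inequality~(\ref{incomplete gamma inequality}) gives $\gamma(\alpha-d/2,x)/\Gamma(\alpha-d/2)<(1-e^{-x})^{\alpha-d/2}\le 1-e^{-x}\le 1-e^{-C_0}<1$ whenever $x=\beta^2/(4v)\le C_0=O(1)$, so the complementary incomplete gamma is at least $e^{-C_0}\Gamma(\alpha-d/2)$. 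Collecting the powers of $2$, $v$ and $\beta$ then leaves a constant times $\Gamma(\alpha-d/2)\beta^{d}/\Gamma(\alpha)$, as required; one may also sanity-check that the two bounds are approached in the limits $\lambda=0$ and $\lambda\to\infty$, respectively. The main obstacle is exactly this uniformity in $\alpha$ (which is allowed to grow with $n$): one must ensure the "small-$u$" contribution in the upper bound is genuinely of lower order and that the truncated incomplete gamma in the lower bound is not a vanishing fraction of the full $\Gamma(\alpha-d/2)$. Both points hinge on the hypotheses $\alpha>d/2+1$ and $\beta^2=O(1)$, exploited through Lemma~\ref{incomplete integrate} and the incomplete-gamma inequality~(\ref{incomplete gamma inequality}); the dependence on $\lambda$ itself is elementary once the monotonicity bound on $R_u(\lambda)$ is in hand.
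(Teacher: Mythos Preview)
Your proof is correct and follows essentially the same strategy as the paper's: rewrite $(1+\lambda^2)^{v+d/2}m^{\alpha,\beta}_{CH}(\lambda)$ as a gamma-type integral, bound the $\lambda$-dependent ratio $(1+\lambda^2)/(2vu+\lambda^2)$ above and below by $\lambda$-free quantities, split the range of integration, and reduce to incomplete gamma estimates. The only cosmetic differences are your substitution $u=\phi^{-2}$ and split point $u=1/(2v)$ versus the paper's $h=\beta^2/(2\phi^2)$ and split at $\phi^2=1$, and your direct use of inequality~(\ref{incomplete gamma inequality}) for the lower bound where the paper combines Lemma~\ref{incomplete integrate} with Stirling's formula.
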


\begin{proof}  [Proof of Lemma \ref{inequality about ch spectral density}] Let  
$h=\beta^2/(2\phi^2)$. Then,
%\begin{scriptsize}
\begin{displaymath}
\begin{split}
&(1+\lambda^2)^{(v+d/2)}m^{\alpha,\beta}_{CH}(\lambda) \\
=&  \frac{\sigma^2 2^{v-\alpha} v^v \beta^{2\alpha}}{\pi^{d/2}\Gamma(\alpha)}(1+\lambda^2)^{(v+d/2)}\int_0^{\infty} (2v\phi^{-2}+\lambda^2)^{-v-d/2}  \phi^{-2(v+\alpha+1)} \exp{(-\beta^2/(2\phi^2))d\phi^2}  \\
           =& \frac{\sigma^2 2^{v-\alpha} v^v \beta^{2\alpha}}{\pi^{d/2}\Gamma(\alpha)} 
           \left[\int_0^{1}  \left(\frac{1+\lambda^2}{2v+\phi^2\lambda^2}\right)^{(v+d/2)}\phi^{-2(-d/2+\alpha+1)} \exp{(-\beta^2/(2\phi^2))}d\phi^2\right.\\
           &+\left.\int_1^{\infty}  \left(\frac{1+\lambda^2}{2v\phi^{-2}+\lambda^2}\right)^{(v+d/2)}\phi^{-2(v+\alpha+1)} \exp{(-\beta^2/(2\phi^2))}d\phi^2\right] \\  
           \gtrsim& \frac{\sigma^2 2^{v-\alpha} v^v \beta^{2\alpha}}{\pi^{d/2}\Gamma(\alpha)} \left[\left(\frac{\beta^2}{2}\right)^{d/2-\alpha}\int_{\frac{\beta^2}{2}}^{\infty}h^{\alpha-d/2-1}\exp(-h) dh \right. \\
           &+\left. \left(\frac{\beta^2}{2}\right)^{-v-\alpha}\int^{\frac{\beta^2}{2}}_{0}h^{\alpha+v-1}\exp(-h) dh \right]\\
           \gtrsim& \frac{1}{\Gamma(\alpha)} \left[\left(\frac{\beta^2}{2}\right)^{d/2}\int_{\frac{\beta^2}{2}}^{\infty}h^{\alpha-d/2-1}\exp(-h) dh+
           \left(\frac{\beta^2}{2}\right)^{-v}\int^{\frac{\beta^2}{2}}_{0}h^{\alpha+v-1}\exp(-h) dh\right].
\end{split}
\end{displaymath}
%\end{scriptsize}
By Lemma \ref{incomplete integrate} and Stirling’s approximation, we have 
$
    \int^{{\beta^2}/{2}}_{0}h^{\alpha+v-1}\exp(-h) dh $ $\asymp \frac{({\beta^2}/{2})^{\alpha+v}}{\alpha+v},
$
and,
$
    \int_{{\beta^2}/{2}}^{\infty}h^{\alpha-d/2-1}\exp(-h) dh \asymp  \Gamma(\alpha-d/2),
$
yielding the lower bound. The upper bound follows similarly. 
\end{proof}
The next lemma gives an alternative lower bound for  the spectral density of CH class, depending on the relationship between $\alpha$ and $\beta$.
\begin{lemma} \label{CH spectral density tail behavior}
Suppose $\beta^2=O(1)$ and $\alpha>d/2+1$, for $\alpha$ fixed or tending to infinity as $n \to \infty$. Then,
$$
(1+\lambda^2)^{v+d/2} m^{\alpha,\beta}_{CH}(\lambda) \gtrsim \left\{\begin{array}{lll}
\frac{\Gamma(\alpha-d/2)\beta^{d}}{\Gamma(\alpha)}(1+\lambda^{2})^{v+d/2},  & \text { if } \lambda^2 \beta^{2} \leq 1, \\
\frac{\beta^{-2v}\Gamma(\alpha-d/2)}{\Gamma(\alpha)}\frac{\alpha}{e^\alpha}, & \text { if } 1 < \lambda^2 \beta^{2} < \alpha+v-1,\\
\frac{\beta^{-2v}\Gamma(\alpha+v)}{\Gamma(\alpha)},& \text { if } \lambda^2 \beta^{2} \geq  \alpha+v-1.
\end{array}\right.
$$
\end{lemma}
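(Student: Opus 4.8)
The plan is to reduce the whole statement to lower bounds for a single one–dimensional integral, pinched differently in each regime. Starting from the inverse–gamma mixture expression for $m^{\alpha,\beta}_{CH}$ recalled in Appendix~\ref{Ancillary} and performing the same substitution $h=\beta^2/(2\phi^2)$ used in the proof of Lemma~\ref{inequality about ch spectral density}, one obtains
$$m^{\alpha,\beta}_{CH}(\lambda)=\frac{\sigma^2 2^{2v}v^v}{\pi^{d/2}}\cdot\frac{\beta^{d}}{\Gamma(\alpha)}\,J(\lambda^2\beta^2),\qquad J(y):=\int_0^\infty (4vh+y)^{-v-d/2}\,h^{v+\alpha-1}e^{-h}\,dh .$$
Since $1+\lambda^2=(\beta^2+\lambda^2\beta^2)/\beta^2$, and since $(\beta^2+y)^{v+d/2}/\beta^{2v}=\beta^{d}(1+\lambda^2)^{v+d/2}$ exactly, the quantity to be estimated is, up to a fixed constant,
$$(1+\lambda^2)^{v+d/2}m^{\alpha,\beta}_{CH}(\lambda)\asymp\frac{(\beta^2+y)^{v+d/2}}{\beta^{2v}\,\Gamma(\alpha)}\,J(y),\qquad y=\lambda^2\beta^2 .$$
It therefore suffices to bound $J(y)$ from below, in each regime retaining only the part of the integral on which $4vh+y$ has a clean order.

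For $y\le 1$ I would restrict to $h\ge y$, where $4vh+y\le(4v+1)h$, so that $J(y)\gtrsim\int_y^\infty h^{\alpha-d/2-1}e^{-h}\,dh=\Gamma(\alpha-d/2,y)\ge\Gamma(\alpha-d/2,1)\asymp\Gamma(\alpha-d/2)$, the last step because $\gamma(\alpha-d/2,1)=O\!\big((\alpha-d/2)^{-1}\big)$ by Lemma~\ref{incomplete integrate} is negligible against $\Gamma(\alpha-d/2)$; substituting back and invoking the displayed identity gives the first line. For $y\ge\alpha+v-1$ I would instead keep $h\le y$, where $4vh+y\asymp y$, so $J(y)\gtrsim y^{-v-d/2}\gamma(v+\alpha,y)\ge y^{-v-d/2}\gamma(v+\alpha,v+\alpha-1)\asymp y^{-v-d/2}\Gamma(v+\alpha)$ by Lemma~\ref{incomplete asymp}; since $(\beta^2+y)^{v+d/2}y^{-v-d/2}\ge1$, this yields the third line.

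The middle regime $1<y<\alpha+v-1$ is the step I expect to require the most care, since there the mode $h^\ast=v+\alpha-1$ of $h^{v+\alpha-1}e^{-h}$ sits to the right of $y$ and neither truncation above is sharp. I would again restrict to $h\ge y$ to get $J(y)\gtrsim\Gamma(\alpha-d/2,y)$, and then exploit the hypothesis $y<\alpha+v-1=(\alpha-d/2)+(v-1+d/2)$: here $y$ exceeds $\alpha-d/2$ by at most the \emph{fixed} amount $v-1+d/2$, so $\Gamma(\alpha-d/2,y)\ge\Gamma\big(\alpha-d/2,(\alpha-d/2)+(v-1+d/2)\big)\gtrsim\Gamma(\alpha-d/2)$ uniformly in $\alpha$ (as $\alpha\to\infty$ this ratio tends to a positive constant by the Gamma–tail estimate underlying Lemma~\ref{incomplete asymp}, and for $\alpha$ in a compact subset of $(d/2+1,\infty)$ it is bounded below by continuity). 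Combined with $(\beta^2+y)^{v+d/2}\ge y^{v+d/2}>1$, this gives $(1+\lambda^2)^{v+d/2}m^{\alpha,\beta}_{CH}(\lambda)\gtrsim\beta^{-2v}\Gamma(\alpha-d/2)/\Gamma(\alpha)$, which dominates the stated bound $\beta^{-2v}\Gamma(\alpha-d/2)\alpha/(e^\alpha\Gamma(\alpha))$ because $\alpha e^{-\alpha}<1$. (A Stirling-based alternative — localizing $J(y)$ near $h^\ast$ and using $\Gamma(\alpha+v)/\Gamma(\alpha)\asymp\alpha^v$ — reaches the same conclusion without even the Gamma–tail estimate.) The factor $\alpha/e^\alpha$ in the statement is thus a deliberately loose but convenient weakening: it is exactly the form in which this bound reappears as the middle term of the maximum in Lemma~\ref{CH decentering}. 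The only genuine subtlety is guaranteeing that the incomplete-gamma comparison in the middle regime is uniform in $\alpha$ across its entire allowed range, which is precisely why the natural case split is at $y=\alpha+v-1$ rather than at $y=1$.
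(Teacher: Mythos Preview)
Your reduction to the integral $J(y)$ and your treatment of the two extreme regimes ($y\le 1$ and $y\ge \alpha+v-1$) coincide with the paper's argument: both restrict the integral to $h\ge y$ or $h\le y$, compare $4vh+y$ to a single term, and invoke Lemma~\ref{incomplete integrate} and Lemma~\ref{incomplete asymp} respectively to handle the resulting incomplete gamma functions.

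The middle regime is where you and the paper genuinely diverge. The paper also reduces to bounding $\Gamma(\alpha-d/2,\alpha+v-1)$ from below, but then applies Alzer's inequality~(\ref{incomplete gamma inequality}) to write $\Gamma(\alpha-d/2,\alpha+v-1)\ge \Gamma(\alpha-d/2)\bigl[1-(1-e^{-(\alpha+v-1)})^{\alpha-d/2}\bigr]$, followed by two uses of $1+x\le e^x$ to extract the factor $\alpha/e^\alpha$; this is exactly where the loose $\alpha/e^\alpha$ in the statement originates. Your route instead notes that $\alpha+v-1=(\alpha-d/2)+c$ for a \emph{fixed} shift $c=v-1+d/2$ and appeals to the shifted tail estimate $\Gamma(a,a+c)/\Gamma(a)\to 1/2$, a direct extension of Lemma~\ref{incomplete asymp}. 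This yields the stronger lower bound $\beta^{-2v}\Gamma(\alpha-d/2)/\Gamma(\alpha)$, from which the stated bound is trivially implied. Your argument is cleaner and sharper; the paper's is more self-contained in that it uses only the Alzer inequality already recorded in the appendix and the elementary bound $e^x\ge 1+x$, at the cost of the exponentially weaker factor. Either way, as you correctly observe, only the loose form is needed downstream in Lemma~\ref{CH decentering}.
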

\begin{proof}[Proof of Lemma \ref{CH spectral density tail behavior}] 
We only prove the $\alpha \to \infty$ case. The fixed $\alpha$ case can be proved by the same method. Let $ h=\beta^2/(2\phi^2)$. Then,
\begin{equation}\label{spectral density expansion}
\begin{split}
&(1+\lambda^2)^{(v+d/2)}m^{\alpha,\beta}_{CH}(\lambda) \\
=&  \frac{\sigma^2 2^{v-\alpha} v^v \beta^{2\alpha}}{\pi^{d/2}\Gamma(\alpha)}\int_0^{\infty} \left(\frac{1+\lambda^2}{2v\phi^{-2}+\lambda^2}\right)^{v+d/2}  \phi^{-2(v+\alpha+1)} \exp{(-\beta^2/(2\phi^2))d\phi^2}  \\
=&  \frac{\sigma^2  {(4v)}^v \beta^{-2v}}{\pi^{d/2}\Gamma(\alpha)}\int_0^{\infty} \left(\frac{1+\lambda^{2}}{4vh \beta^{-2}+\lambda^{2}}\right)^{v+d/2}  h^{v+\alpha-1} \exp(-h) dh\\
=&  \frac{\sigma^2  {(4v)}^v \beta^{-2v}}{\pi^{d/2}\Gamma(\alpha)} \left[\int_0^{\lambda^2\beta^2} \left(\frac{1+\lambda^{2}}{4vh \beta^{-2}+\lambda^{2}}\right)^{v+d/2}  h^{v+\alpha-1} \exp(-h) dh\right.
\\
&+\left.\int_{\lambda^2\beta^2}^{\infty} \left(\frac{1+\lambda^{2}}{4vh \beta^{-2}
+\lambda^{2}}\right)^{v+d/2}  h^{v+\alpha-1} \exp(-h) dh \right]\\
\asymp& \frac{\beta^{-2v}}{\Gamma(\alpha)} \left[\int_0^{\lambda^2\beta^2} \left(\frac{1+\lambda^{2}}{4vh \beta^{-2}+\lambda^{2}}\right)^{v+d/2}  h^{v+\alpha-1} \exp(-h) dh\right.
\\
&+\left.[(1+\lambda^{2})\beta^{2}]^{v+d/2}\int_{\lambda^2\beta^2}^{\infty} h^{\alpha-d/2-1} \exp(-h) dh \right].
\end{split}
\end{equation}
When $\lambda^2 \beta^{2}\leq 1$, by Lemma \ref{incomplete integrate}:
\begin{displaymath}
\begin{split}
&(1+\lambda^2)^{(v+d/2)}m^{\alpha,\beta}_{CH}(\lambda) \\
&\gtrsim \frac{\beta^{-2v}}{\Gamma(\alpha)} [(1+\lambda^{2})\beta^{2}]^{v+d/2}\int_{\lambda^2\beta^2}^{\infty} h^{\alpha-d/2-1} \exp(-h) dh \\
&\asymp \frac{\beta^{-2v}}{\Gamma(\alpha)}[(1+\lambda^{2})\beta^{2}]^{v+d/2} \Gamma(\alpha-d/2)\\
&\asymp \frac{\Gamma(\alpha-d/2)\beta^{d}}{\Gamma(\alpha)}(1+\lambda^{2})^{v+d/2}. 
\end{split}
\end{displaymath}
When $ \lambda^2 \beta^{2} \geq \alpha+v-1 $, we have, by (\ref{incomplete gamma go to 1}) and (\ref{spectral density expansion}):
%\begin{footnotesize}
\begin{displaymath}
\begin{split}
&(1+\lambda^2)^{(v+d/2)}m^{\alpha,\beta}_{CH}(\lambda) \\
&\asymp \frac{\beta^{-2v}}{\Gamma(\alpha)} \left[\int_0^{\lambda^2\beta^2}  h^{v+\alpha-1} \exp(-h) dh
+[(1+\lambda^{2})\beta^{2}]^{v+d/2}\int_{\lambda^2\beta^2}^{\infty} h^{\alpha-d/2-1} \exp(-h) dh \right]\\
&\gtrsim \frac{\beta^{-2v}\Gamma(\alpha+v)}{\Gamma(\alpha)}.
\end{split}
\end{displaymath}
When $ 1\leq \lambda^2 \beta^{2} \leq \alpha+v-1 $, we have, by (\ref{incomplete gamma inequality}), (\ref{spectral density expansion})  and $e^x \ge 1+x$:
\begin{displaymath}
\begin{split}
&(1+\lambda^2)^{(v+d/2)}m^{\alpha,\beta}_{CH}(\lambda) \\
\asymp& \frac{\beta^{-2v}}{\Gamma(\alpha)} \left[\int_0^{\lambda^2\beta^2}  h^{v+\alpha-1} \exp(-h) dh
+[(1+\lambda^{2})\beta^{2}]^{v+d/2}\int_{\lambda^2\beta^2}^{\infty} h^{\alpha-d/2-1} \exp(-h) dh \right]\\
\gtrsim& \frac{\beta^{-2v}}{\Gamma(\alpha)}[(1+\lambda^{2})\beta^{2}]^{v+d/2}\int_{\lambda^2\beta^2}^{\infty} h^{\alpha-d/2-1} \exp(-h) dh\\
\gtrsim& \frac{\beta^{-2v}}{\Gamma(\alpha)}\int_{ \alpha+v-1}^{\infty} h^{\alpha-d/2-1} \exp(-h) dh\\
\geq& \frac{\beta^{-2v}\Gamma(\alpha-d/2)}{\Gamma(\alpha)}[1-(1-e^{- \alpha-v+1})^{\alpha-d/2}]\\
%=& \frac{\beta^{-2v}\Gamma(\alpha-d/2)}{\Gamma(\alpha)}[1-(1-e^{- \alpha-v+1})^{e^{ \alpha+v-1} \frac{\alpha-d/2}{e^{ \alpha+v-1}}}]\\
\gtrsim&\frac{\beta^{-2v}\Gamma(\alpha-d/2)}{\Gamma(\alpha)}\left[1-\exp\left(-{ \frac{\alpha-d/2}{e^{ \alpha+v-1}}}\right)\right]\\
\gtrsim& \frac{\beta^{-2v}\Gamma(\alpha-d/2)}{\Gamma(\alpha)}{ \frac{\alpha-d/2}{e^{ \alpha+v-1}}}\\
\gtrsim&\frac{\beta^{-2v}\Gamma(\alpha-d/2)}{\Gamma(\alpha)}\frac{\alpha}{e^\alpha}. 
\end{split}
\end{displaymath}
%\end{footnotesize}
\end{proof}

\noindent
We denote the unit ball of RKHS $\mathbb{H}$ by $\mathbb{H}_1$.
\begin{lemma} \label{rescaled hilbert inclusion}
    Assume the spectral density $m(\lambda)$ satisfies  that $a \to m(a \lambda)$ is decreasing on $(0, \infty)$ for every $\lambda \in \mathbb{R}^d$. If $a \leq b$, then for Mat\'ern process we have $\frac{1}{b^{d/2}}\mathbb{H}_1^b \subset \frac{1}{a^{d/2}}\mathbb{H}_1^a$ and for CH process we have $\frac{1}{b^{d/2}}\mathbb{H}_1^{\alpha,b} \subset \frac{1}{a^{d/2}}\mathbb{H}_1^{\alpha,a}$, where $\alpha>0$ is any fixed number.
\end{lemma}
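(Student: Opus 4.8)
The plan is to prove one general statement — that for any centered stationary Gaussian process whose spectral density $m$ satisfies the monotonicity hypothesis, the rescaled RKHS unit balls nest as claimed — and then to note that the Mat\'ern and the CH (with $\alpha$ fixed) spectral densities each satisfy that hypothesis, so both assertions follow at once. Two preliminary facts are needed. First, the scaling identity for the rescaled spectral density: if $m$ is the spectral density of $W$, then $W^c_t = W_{t/c}$ has spectral density $m_c(\lambda) = c^d\, m(c\lambda)$; this is immediate from $\mu_c(B)=\mu(cB)$ of Section~\ref{rescaled measure section} by the change of variables $\lambda\mapsto c\lambda$, it reproduces $m^{\phi}_M$ of Appendix~\ref{Ancillary} with $\phi=c$, and the analogue $m^{\alpha,\beta}_{CH}(\lambda)=\beta^d\, m^{\alpha,1}_{CH}(\beta\lambda)$ drops out of the integral representation of $m^{\alpha,\beta}_{CH}$ after the substitution $\phi^2\mapsto\beta^2\phi^2$. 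Second, the RKHS-norm description from Lemma~\ref{rescaled measure}: for $f\in\mathbb{H}^c(\mathcal{T})$ one has $\|f\|^2_{\mathbb{H}^c}=\min\{\int|g|^2\,d\mu_c:\, g\in L^2(\mu_c),\ \mathcal{F}_c g|_{\mathcal{T}}=f\}$, with $d\mu_c=m_c\,d\lambda$.

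The core step is a comparison of representers. Fix $a\le b$ and $f\in\mathbb{H}^b(\mathcal{T})$, and let $g^*\in L^2(\mu_b)$ attain the minimum, so $\mathcal{F}_b g^*|_{\mathcal{T}}=f$ and $\|f\|^2_{\mathbb{H}^b}=\int|g^*|^2 m_b\,d\lambda$. Since $m_a>0$ everywhere for both the Mat\'ern and the CH classes, put $g:=g^*\,m_b/m_a$. Then the integrands of $\mathcal{F}_a g$ and $\mathcal{F}_b g^*$ coincide ($g\,m_a=g^*\,m_b$), so $\mathcal{F}_a g|_{\mathcal{T}}=f$ as well; and by the scaling identity together with $m(a\lambda)\ge m(b\lambda)$ for $a\le b$, we get $m_b(\lambda)/m_a(\lambda)=(b/a)^d\,m(b\lambda)/m(a\lambda)\le(b/a)^d$, hence
\[
\int|g|^2\,d\mu_a \;=\; \int|g^*|^2\,\frac{m_b}{m_a}\,m_b\,d\lambda \;\le\; \Big(\frac{b}{a}\Big)^{d}\int|g^*|^2 m_b\,d\lambda \;=\; \Big(\frac{b}{a}\Big)^{d}\|f\|^2_{\mathbb{H}^b}\;<\;\infty .
\]
In particular $g\in L^2(\mu_a)$, so $f\in\mathbb{H}^a(\mathcal{T})$ and $\|f\|_{\mathbb{H}^a}\le(b/a)^{d/2}\|f\|_{\mathbb{H}^b}$. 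Therefore, if $f\in b^{-d/2}\mathbb{H}_1^b$, i.e.\ $\|f\|_{\mathbb{H}^b}\le b^{-d/2}$, then $\|f\|_{\mathbb{H}^a}\le(b/a)^{d/2}b^{-d/2}=a^{-d/2}$, i.e.\ $f\in a^{-d/2}\mathbb{H}_1^a$, which is the claimed inclusion.

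Finally I would check the hypothesis for the two covariances. For the Mat\'ern, $m^{1}_M(s\lambda)\propto(2v+s^2\|\lambda\|^2)^{-(v+d/2)}$ is (strictly) decreasing in $s>0$; for the CH with $\alpha$ fixed, $m^{\alpha,1}_{CH}(s\lambda)$ is an integral over $\phi^2\in(0,\infty)$ of $(2v\phi^{-2}+s^2\|\lambda\|^2)^{-(v+d/2)}$ against a positive weight, and each integrand is decreasing in $s$, so the integral is too. The only point needing genuine care is the core step: because $\mathbb{H}^c(\mathcal{T})$ consists of \emph{restrictions} to the bounded domain $\mathcal{T}$, its norm is a minimum over representers, and one must verify that the representer $g$ built from the $b$-optimal one is admissible for the $a$-norm, i.e.\ $g\in L^2(\mu_a)$ — which is exactly what the displayed estimate, and hence the monotonicity of $s\mapsto m(s\lambda)$, delivers. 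No property of $\mathcal{T}$ beyond boundedness is used.
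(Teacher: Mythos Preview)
Your proof is correct and follows essentially the same route as the paper's: bound the density ratio $m_b/m_a\le (b/a)^d$ via the monotonicity hypothesis, transfer a $\mu_b$-representer $g^*$ of $f$ to a $\mu_a$-representer $g=g^*\,m_b/m_a$, and use the minimum-norm description of the RKHS norm from Lemma~\ref{rescaled measure} to conclude $\|f\|_{\mathbb{H}^a}\le(b/a)^{d/2}\|f\|_{\mathbb{H}^b}$. Your presentation is in fact somewhat cleaner (you take the minimizer $g^*$ at the outset and explicitly verify the monotonicity hypothesis for both covariance families), but the idea is identical.
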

\begin{proof}
Here we only prove the Mat\'ern process case, and the proof of the CH case is the same.

We have $m_M^b/m_M^a(\lambda)=(b/a)^d [m_M^1(b \lambda) /m_M^1(a \lambda) ]\leq (b/a)^d$. Then by Lemma \ref{rescaled measure}, an arbitrary element of $\mathbb{H}_1^b$ has the form:
$$
\mathcal{F}_b h= \int e^{i <\lambda,t>} h(\lambda) d \mu_b(\lambda)=\int e^{i <\lambda,t>} \left(h\frac{m_M^b}{m_M^a}\right)d \mu_a(\lambda),
$$
where $h \in L_2(\mu_b)$. Let $g$ be the  smallest choice of minimum norm  in (\ref{RKHS of rescaled matern}) for $\|\mathcal{F}_b h\|_{\mathbb{H}^b}$,
and let $\tilde{h}$ be the  smallest choice of minimum norm  in (\ref{RKHS of rescaled matern}) for $\|\mathcal{F}_b g\|_{\mathbb{H}^a}$. Then,
$$
\int |h\frac{m_M^b}{m_M^a}|^2d \mu_a(\lambda) \leq \|\frac{m_M^b}{m_M^a}\|_{\infty}\int |h|^2 d \mu_b(\lambda) \leq (b/a)^{d}\int |h|^2d \mu_b(\lambda).
$$
Then we have, 
\begin{displaymath} 
\begin{split}
&\|\mathcal{F}_b h\|^2_{\mathbb{H}^a}=\|\mathcal{F}_b g\|^2_{\mathbb{H}^a}=
\|\mathcal{F}_a \Tilde{h}\|^2_{\mathbb{H}^a}=
\int |g\frac{m_M^b}{m_M^a}|^2d \mu_a(\lambda)
\\
\leq &\|\frac{m_M^b}{m_M^a}\|_{\infty}\int |g|^2 d \mu_b(\lambda) \leq (\frac{b}{a})^d \|\mathcal{F}_b h\|^2_{\mathbb{H}^b}.
\end{split}
\end{displaymath}
This finishes the proof.

%we also note that 
%$$
%%%%\int e^{i <\lambda,t>} h(\lambda) d \mu_b(\lambda)= \int e^{i <\lambda,t>} P^bh(\lambda) d \mu_b(\lambda),
%%$$
%%%where $P^bh$ denote the projection(with the norm $L_2(\mu_b)$) of $h$ on the closed
%linear span of the set of functions $(e_s : s \in \mathcal{T})$, $\mathcal{T} \subset \mathbb{R}^d$, $e_s(\lambda)=\exp(i<\lambda,s>)$, so 

\end{proof}

\begin{lemma} \label{uniform distance within hilbert}
 For any $h \in \mathbb{H}_1^a$ for Mat\'ern process(or $\mathbb{H}_1^{\alpha,a}$ for CH process) and $t \in \mathbb{R}^d$, we have $|h(0)|^2 \leq \|\mu\|=\int d\mu$ and $|h(t)-h(0)| \leq a^{-1} \|t\| (\int \|\lambda\|^2 d \mu)^{1/2}$, where $\mu$ is the spectral measure with rescaling parameter equal to $1$.
\end{lemma}
\begin{proof}
Here also we only prove the Mat\'ern process case, and the proof of the CH case is the same.

By Lemma \ref{rescaled measure},  if $h \in \mathbb{H}_1^a$, then there exists a function $\psi$ such that $\mathcal{F}_a \psi=h$ and $\int |\psi|^2 d\mu_a \leq 1$. Then by the same method of the proof of Lemma 4.8 in \cite{van2009adaptive}, the proof follows.
\end{proof}

\end{appendix}
%%%%%%%%%%%%%%%%%%%%%%%%%%%%%%%%%%%%%%%%%%%%%%
%% Example with multiple Appendixes:        %%
%%%%%%%%%%%%%%%%%%%%%%%%%%%%%%%%%%%%%%%%%%%%%%

\begin{acks}[Acknowledgments]
The authors would like to thank the anonymous referees, an Associate
Editor and the Editor for their constructive comments that improved the presentation of the main results.
\end{acks}

%%%%%%%%%%%%%%%%%%%%%%%%%%%%%%%%%%%%%%%%%%%%%%
%% Funding information, if any,             %%
%% should be provided in the                %%
%% funding section.                         %%
%%%%%%%%%%%%%%%%%%%%%%%%%%%%%%%%%%%%%%%%%%%%%%
\begin{funding}
The research of Bhadra was partially supported by U.S. National Science Foundation Grants DMS-2014371 and SES-2448704.
\end{funding}

%%%%%%%%%%%%%%%%%%%%%%%%%%%%%%%%%%%%%%%%%%%%%%%%%%%%%%%%%%%%%
%%                  The Bibliography           
\bibliographystyle{apalike}
\bibliography{samplebib}    
\end{document}